\shorttitle{The difference of convex algorithm on Hadamard manifolds}
\date{May 2nd, 2023}
\author{%
    Ronny Bergmann%
    \thanks{%
        Norwegian University of Science and Technology, Department of Mathematical Sciences, Trondheim, Norway (\email{ronny.bergmannn@ntnu.no}, \url{https://www.ntnu.edu/employees/ronny.bergmann}, \orcid{0000-0001-8342-7218}).%
    }
    \and
    Orizon P. Ferreira%
    \thanks{%
        IME/UFG, Avenida Esperan\c{c}a, s/n, Campus Samambaia, Goi\^ania, GO, 74690-900, Brazil (\email{orizon@ufg.br}).
    }
    \and
    Elianderson M. Santos\thanks{%
        Instituto Federal de Educa\c{c}\~{a}o, Ci\^{e}ncia e Tecnologia do Maranh\~{a}o, Barra do Corda, MA, 65950-000, Brazil (\email{elianderson.santos@ifma.edu.br},\orcid{0000-0003-0936-9640}).
    }
    \and
    João Carlos de O. Souza%
    \thanks{%
        Department of Mathematics, Federal University of Piau\'{i}, Teresina, PI, Brazil (\email{joaocos.mat@ufpi.edu.br},\orcid{0000-0003-4053-8211})
    }
}
\title{The difference of convex algorithm\\ on Hadamard manifolds}
\begin{document}
\maketitle
\begin{abstract}
In this paper, we propose a Riemannian version of the difference of convex algorithm (DCA) to solve a minimization problem involving the difference of convex (DC) function. We establish the equivalence between the classical and simplified Riemannian versions of the DCA. We also prove that, under mild assumptions, the Riemannian version of the DCA is well-defined, and every cluster point of the sequence generated by the proposed method, if any, is a critical point of the objective DC function. Additionally, we establish some duality relations between the DC problem and its dual. To illustrate the effectiveness of the algorithm, we present some numerical experiments.
\end{abstract}
\begin{keywords}
    DC programming,  DCA,   Fenchel conjugate function,  Riemannian manifolds
\end{keywords}
\begin{AMS}
    \href{https://mathscinet.ams.org/msc/msc2010.html?t=90C30}{90C30}
    \href{https://mathscinet.ams.org/msc/msc2010.html?t=90C26}{90C26}
    \href{https://mathscinet.ams.org/msc/msc2010.html?t=49N14}{49N14}
    \href{https://mathscinet.ams.org/msc/msc2010.html?t=49Q99}{49Q99}
\end{AMS}

\section{Introduction}
In this paper, we consider a general non-convex and non-smooth constrained optimization problem
involving a difference of convex functions (shortly, \emph{DC problem}) as follows
\begin{equation}\label{eq:DCOptP}
    \argmin_{p\in \mathcal M}  f(p), \qquad \text{where } f(p)\coloneqq g(p)-h(p),
\end{equation}
where the constrained set $\mathcal M $ is endowed with a structure of a
\emph{complete, simply connected Riemannian manifold of non-positive sectional curvature,
i.e., a Hadamard manifold}, the functions $g,h\colon\mathcal M \to \overline{\mathbb{R}}$,
are convex, lower semi-continuous and proper functions (called DC components), and
$\overline{\mathbb{R}}\coloneqq {\mathbb{R}}\cup\{+\infty\}$ is the extended real line.

Due to the increasing number of optimization problems arising from practical applications
posed in a Riemannian setting, the interest in this topic has increased significantly over
the years. Even though we are not currently concerned with practical issues at this point, we emphasize
that practical applications arise whenever the natural structure of the data is modeled as
an optimization problem on a Riemannian manifold. For example, several problems in image
processing, computational vision and signal processing can be modeled as problems in this
setting. Papers dealing with this subject include~\cite{BacakBergmannSteidlWeinmann2016,%
BergmannPerschSteidl2016,BergmannWeinmann2016, BrediesHollerStorathWeinmann2018,%
WeinmannDemaretStorath2014, WeinmannDemaretStorath2016}, and problems in medical imaging
modeled in this context are addressed in \cite{EspositoWeinmann2019}.
Problems of tracking, robotics and scene motion analysis are also posed on
Riemannian manifolds, as seen in~\cite{FreifeldBlack2012, ParkBobrowPloen1995}. Machine learning \cite{NickelKiela2018},
artificial intelligence~\cite{MuscoloniThomasCiucciBianconiVitt2017},
neural circuits \cite{Sharpee2019}, low-rank approximations of hyperbolic
embeddings~\cite{JawanpuriaMeghwanshiMishra2019,TabaghiDokmanic2020},
Procrustes problems~\cite{TabaghiDokmanic2021},
financial networks~\cite{Keller-ResselNargang2021},
complex networks~\cite{KrioukovPapadopoulosKitsakVahdatBoguna2010, MoshiriSafaeiSamei2021},
embeddings of data~\cite{WilsonHancockPekalskaDuin2014}
and strain analysis \cite{Vollmer2018,Yamaji2008} are some of the other areas where optimization problems on Riemannian manifolds are prevalent.
Additionally, we mention that there are many papers on statistics in the Riemannian context, as seen in \cite{BhattacharyaBhattacharya2008, Fletcher2013}.

As previously mentioned, there has been a significant increase in the number of works focusing on concepts and techniques of
nonlinear programming and convex analysis in the Riemannian setting, see~\cite{AbsilMahonySepulchre2008,Udriste1994}.
In addition to the theoretical issues addressed, which have an interest of their own,
the Riemannian machinery provides support to design efficient algorithms to solve
optimization problem in this setting; papers on this subject include~\cite{AbsiBakerGallivan2007,%
EdelmanAriasSmith1999, HuangGallivanAbsil2015, LiMordukhovichWangYao2011, Manton2015,%
MillerMalick2005, NesterovTodd2002, Smith1994, WenYin2012, WangLiWangYao2015} and references therein.

Recently, the concept of the conjugate of a convex function was introduced in the Riemannian context. This is an important tool in convex analysis and plays a significant role in the theory of duality on Riemannian manifolds,
see~\cite{SilvaLouzeiroBergmannHerzog2022,BergmannHerzogSilvaLouzeiroTenbrinckVidalNunez2021}.
In particular, this definition enables us to propose a Riemannian version of the DCA.

DC problems cover a broad class of non-convex optimization problems and DCA was the first method introduced especially for the standard DC problem \cref{eq:DCOptP}.
It was proposed by~\cite{PhamSouad1986} in the Euclidean setting.
The basic idea behind DCA is to compute a subgradient of each (convex) DC component separately,
i.e., at each iterate $k$, DCA calculates $y^{(k)}\in \partial h(x^{(k)})$ and uses this trial point
to compute $x^{(k+1)} \in \partial g^* (y^{(k)})$, where $\partial g^*$ denotes
the subdifferential of the conjugate function of $g$ in the sense of convex analysis.
Equivalently, DCA approximates the second DC component $h(x)$ by its affine minorization
$h_k(x)=h(x^{(k)}) + \langle x-x^{(k)}, y^{(k)} \rangle$, with $y^{(k)}\in \partial h(x^{(k)})$,
and minimizes the resulting convex function
\begin{equation*}
    x^{(k+1)}\in \argmin_{x\in\mathbb R^n} g(x) - h_ k(x),
\end{equation*}
which is called the alternative version of DCA therein.
On the other hand, computing $y^{(k)}\in \partial h(x^{(k)})$ is equivalent to find a solution of
the dual problem
\begin{equation*}
    \argmin_{y\in\mathbb R^n} h^*(y) - g^*(y^{k-1}) - \langle y-y^{k-1}, x^{(k)} \rangle.
\end{equation*}
Therefore, DCA can also be viewed as an iterative primal-dual subgradient method.

DC optimization algorithms have been proved to be particularly successful for analyzing
and solving a variety of highly structured and practical problems;
see for instance~\cite{Oliveira2020,LeThiPhamDinh2018,AnTao2005}.
To the best of our knowledge, the work in~\cite{SouzaOliveira2015} was the first to deal with DC functions in Riemannian manifolds. More precisely, the authors proposed the proximal point algorithm for DC functions (DCPPA) and studied the convergence of the method in Hadamard manifolds.
Recently, \cite{AlmeidaNetoOliveiraSouza2020} proposed a modified version
of the DCPPA in the same Riemannian setting in order to accelerate
the convergence of the method considered in~\cite{SouzaOliveira2015}.

The aim of this paper is to propose, for the first time, a Riemannian version of the DCA.
We obtain an equivalence between the classical and a simplified version of the Riemannian DCA.
Therefore, under mild assumptions, we prove that the Riemannian DCA is well-defined, and
every cluster point of the sequence generated by the proposed method, if any,
is a critical point of the objective DC function in \cref{eq:DCOptP}.
We also extend some relations between the DC problem \cref{eq:DCOptP} and
its dual to the Riemannian setting. To illustrate the effectiveness of DCA, we present some numerical experiments.

This paper is organized as follows. In \cref{sec:Preliminaries} we present some notations and preliminary results that will
be used throughout the paper. In \cref{sec:Duality} some relations between the DC problem and its dual
are established on Hadamard manifolds. In \cref{sec:DCA_manifolds} we present a formulation of the Riemannian DCA.
In \cref{sec:Convergence} we study the convergence properties of the proposed method.
In \cref{Sec:Numerics} we provide some applications to the problem of maximizing a convex function in a compact set and manifold-valued image denoising. Finally, \cref{sec:Conclusion} presents some conclusions.

\section{Preliminaries}
\label{sec:Preliminaries}
In this section, we recall some concepts, notations, and basics results about Riemannian manifolds and optimization.
For more details see, for example, \cite{doCarmo1992, Rapcsak1997, Sakai1996, Udriste1994}.
Let us begin with concepts about Riemannian manifolds.
We denote by $\mathcal M$ a finite dimensional Riemannian manifold and
by $T_p\mathcal M$ the \emph{tangent space} of $\mathcal M$ at $p$.
The corresponding norm associated to the Riemannian metric $\langle \cdot , \cdot \rangle$
is denoted by $\lVert\cdot\rVert$.
Moreover, the \emph{tangent bundle} of $\mathcal M$,
will be denoted  by $T\mathcal M$. We use $\ell(\gamma)$ to denote the length of a piecewise smooth curve
$\gamma\colon [a,b] \rightarrow \mathcal M$.
The Riemannian distance between $p$ and $q$ in $\mathcal M$ is denoted by $d(p,q)$,
which induces the original topology on $\mathcal M$, namely,
$(\mathcal M, d)$, which is a complete metric space.
A complete, simply connected Riemannian manifold of non-positive sectional curvature
is called a Hadamard manifold. \emph{All Riemannian manifold considered in this paper will be Hadamard manifolds and will be denoted ${\mathcal M}$.} For a $p\in\mathcal M$, the exponential map $\exp_p\colon T_p \mathcal M \to \mathcal M$ is a diffeomorphism
and $\exp^{-1}_p\colon\mathcal M\to T_p\mathcal M$ denotes its inverse.
In this case, $d(q, p) = \lVert \exp^{-1}_pq\rVert$ holds,
the function $d_q^2\colon\mathcal M\to\mathbb{R}$ defined by $d_q^2(p)\coloneqq d^2(q,p)$
is $C^{\infty}$ and its gradient is given by $\grad d_q^2(p) = -2\exp^{-1}_pq$.
Now, we recall some concepts and basic properties about optimization in the Riemannian context.
For that, given two points $p,q\in\mathcal M$,  denotes by $\gamma_{pq}$ the geodesic segment  joining  $p$ to $q$, i.e.,
$\gamma_{pq}\colon[0,1]\rightarrow\mathcal M$ with $\gamma_{pq}(0)=p$ and $\gamma_{pq}(1)=q$. We denote by $\overline{\mathbb{R}}\coloneqq \mathbb{R}\cup\{+\infty\}$ the extended real line.
The \emph{domain} of a function $f\colon\mathcal M \to \overline{\mathbb{R}}$ is denoted by $\dom (f) \coloneqq \{ p\in \mathcal M\ : \ f(p) < +\infty\}$.
The function $f$ is said to be \emph{convex (resp. strictly convex)} if, for any $p,q\in \mathcal M$,
the composition $f\circ{\gamma_{pq}}\colon[0, 1]\to\mathbb{R}$ is convex (resp. strictly convex), i.e.,
$(f\circ{\gamma_{pq}})(t)\leq(1-t)f(p)+tf(q)$ (resp. ($f\circ{\gamma_{pq}})(t)<(1-t)f(p)+tf(q)$),
for all $t\in[0,1]$.
A function $f\colon\mathcal M \to \overline{\mathbb{R}}$ is said to be
\emph{$\sigma$-strongly convex} for $\sigma > 0$ if, for any
$p,q\in \mathcal M$, the composition $f\circ{\gamma_{pq}}\colon[0, 1]\to \overline{\mathbb{R}}$ is
$\sigma$-strongly convex, i.e.,
$(f\circ{\gamma_{pq}})(t)\leq(1-t)f(p)+tf(q)-\frac{\sigma}{2}t(1-t)d^2(q,p)$, for all $t\in[0,1]$.

\begin{definition}
    The \emph{subdifferential} of a proper, convex function
    $f\colon\mathcal M \to \overline{\mathbb{R}}$ at $p\in \mathcal \dom (f) $ is the set
    \begin{equation*}
        \partial f(p)
        \coloneqq
        \bigl\{
            X \in T_p\mathcal M\ : \ %
            f(q) \geq f(p)+\langle X,\exp^{-1}_pq \rangle,
            \quad\text{ for all } q\in \mathcal M
        \bigr\}.
    \end{equation*}
\end{definition}

The proof of the first item of the following theorem can be found in \cite[Theorem 4.10, p. 76]{Udriste1994}, while the proof of the second one follows the same idea as the first one.
\begin{theorem}
    \label{thm:f-convex-subdiff}
    Let $f\colon\mathcal M\to \mathbb{R}$ be a function. Then,
    \begin{enumerate}
        \item
        \label{thm:f-convex-subdiffi}
        The function $f$ is convex if and only if
        $f(p)\geq f(q) + \langle X, \exp^{-1}_qp\rangle$,
        for all $p, q\in \mathcal M$ and all $X\in \partial f(q)$.
        \item
        \label{thm:f-convex-subdiffii}
        The function $f$ is $\sigma$-strongly convex if and only if
        $f(p)\geq f(q) + \langle X, exp^{-1}_q p \rangle + \frac{\sigma}{2}d^2(p,q)$,
        for all $p, q\in \mathcal M$ and all $X\in \partial f(q)$.
    \end{enumerate}
\end{theorem}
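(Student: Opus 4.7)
The plan is to establish the equivalence in item 2 by mimicking the standard argument used for item 1 in Udri\c{s}te, using geodesics in place of line segments and the exponential map to transport tangent vectors. Since item 1 is already cited from the literature, I focus on item 2.

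\emph{Forward direction.} Assume $f$ is $\sigma$-strongly convex. Fix $p,q\in\mathcal M$ and $X\in\partial f(q)$, and let $\gamma_{qp}\colon[0,1]\to\mathcal M$ be the geodesic from $q$ to $p$. Using $\exp^{-1}_q\gamma_{qp}(t)=t\exp^{-1}_q p$, the subdifferential inequality at $q$ yields
\begin{equation*}
    f(\gamma_{qp}(t)) \geq f(q) + t\langle X, \exp^{-1}_q p\rangle,
\end{equation*}
while $\sigma$-strong convexity applied to $f\circ\gamma_{qp}$ gives
\begin{equation*}
    f(\gamma_{qp}(t)) \leq (1-t)f(q) + t f(p) - \tfrac{\sigma}{2}t(1-t)d^2(p,q).
\end{equation*}
Subtracting $f(q)$, dividing by $t>0$, and letting $t\downarrow 0$ yields
$\langle X,\exp^{-1}_q p\rangle \leq f(p)-f(q)-\tfrac{\sigma}{2}d^2(p,q)$, which is the desired inequality.

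\emph{Reverse direction.} Fix $p,q\in\mathcal M$, pick $t\in(0,1)$ and set $r\coloneqq\gamma_{qp}(t)$. For any $X\in\partial f(r)$, applying the hypothesis twice gives
\begin{equation*}
    f(q) \geq f(r) + \langle X,\exp^{-1}_r q\rangle + \tfrac{\sigma}{2}d^2(q,r),
    \qquad
    f(p) \geq f(r) + \langle X,\exp^{-1}_r p\rangle + \tfrac{\sigma}{2}d^2(p,r).
\end{equation*}
The Riemannian fact that makes everything work is that along the unique geodesic through $q$, $r$, and $p$ one has $\exp^{-1}_r q=-t\,\dot\gamma_{qp}(t)$ and $\exp^{-1}_r p=(1-t)\,\dot\gamma_{qp}(t)$, so $(1-t)\exp^{-1}_r q + t\exp^{-1}_r p=0$. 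Adding the two inequalities with weights $(1-t)$ and $t$ eliminates the inner-product terms. Using $d(q,r)=t\,d(q,p)$ and $d(p,r)=(1-t)\,d(q,p)$, the quadratic terms collapse to $\tfrac{\sigma}{2}t(1-t)d^2(q,p)$, and what remains is exactly the definition of $\sigma$-strong convexity of $f\circ\gamma_{qp}$.

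\emph{Anticipated obstacle.} The forward direction is essentially a one-liner once the subdifferential inequality is paired with strong convexity on the geodesic. The subtle point in the reverse direction is the availability of a subgradient at the interior geodesic point $r$; this is where some implicit regularity of $f$ (finite-valuedness and the convexity that will be deduced \emph{a posteriori}) must be invoked, exactly as in Udri\c{s}te's proof of item~1. Once this nonemptiness is in place, the proof reduces to the antiparallel identity for $\exp^{-1}_r p$ and $\exp^{-1}_r q$ along a geodesic, which is the only genuinely Riemannian ingredient; everything else is a linear combination of scalar inequalities.
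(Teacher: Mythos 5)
Your proposal is correct and is essentially the argument the paper has in mind: the paper gives no proof of its own, citing Udri\c{s}te for item~1 and stating that item~2 ``follows the same idea,'' which is exactly the standard geodesic argument you carry out (subdifferential inequality plus strong convexity along $\gamma_{qp}$ for the forward direction; the antiparallel identity $(1-t)\exp^{-1}_r q + t\exp^{-1}_r p = 0$ and the weighted combination for the reverse). You also correctly isolate the one delicate point, nonemptiness of $\partial f(r)$ at interior geodesic points in the reverse direction, which is inherited from the cited proof of item~1 and is not resolved differently in the paper.
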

The following definition play an import role in the paper, see \cite[p. 363]{Bourbaki1995}.
\begin{definition} \label{def:linf}
    A function $f\colon\mathcal M\to \overline{\mathbb{R}}$ is said to be
    \emph{lower semi-continuous} (\emph{lsc}), at $p\in \mathcal M$ if  $ \liminf_{q\to p} f(q)= f(p)$.  If $f$ is lower semi-continuous at all points along $\mathcal M$, we simply state that $f$ is \emph{lower semi-continuous}.
\end{definition}
The proof of the following result is an immediate consequence of   \cite[Proposition 2.5]{WangLiWangYao2015}.
\begin{proposition}
    \label{cont_subdif}
    Let $f\colon \mathcal M \rightarrow \overline{\mathbb{R}}$ be a convex
    and lower semi-continuous function. Consider the sequence
    $(p^{(k)})_{k\in\mathbb{N}}\subset \intdom (f)$ such that $\displaystyle\lim_{k\to\infty}p^{(k)}={\bar p} \in \intdom (f)$.
    If $(X^{(k)})_{k\in\mathbb{N}}$ is a sequence such that $X^{(k)}\in \partial f(p^{(k)})$
    for every $k\in \mathbb{N}$, then $(X^{(k)})_{k\in\mathbb{N}}$ is bounded and
    its cluster points belongs to $\partial f({\bar p}).$
\end{proposition}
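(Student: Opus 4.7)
The plan is to adapt the classical Euclidean argument to the Hadamard setting, handling the fact that the vectors $X^{(k)}$ live in different tangent spaces $T_{p^{(k)}}\mathcal M$ via parallel transport along geodesics (which is well-defined and smooth on a Hadamard manifold). The two things to prove are boundedness of $(X^{(k)})_{k\in\mathbb N}$ (after transport to $T_{\bar p}\mathcal M$, say) and the fact that its cluster points lie in $\partial f(\bar p)$.

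For boundedness, I would first invoke the fact that a proper convex lsc function on a Hadamard manifold is locally Lipschitz on the interior of its domain; this is the standard consequence of convexity together with upper boundedness on a small geodesic ball inside $\intdom(f)$. Since $\bar p\in\intdom(f)$, I pick $r>0$ so that the closed ball $\overline{B(\bar p,2r)}$ lies in $\intdom(f)$ and $f$ is $L$-Lipschitz on $B(\bar p,2r)$; for $k$ large enough, $p^{(k)}\in B(\bar p,r)$. Then, for any unit vector $V\in T_{p^{(k)}}\mathcal M$ and any sufficiently small $t>0$, setting $q=\exp_{p^{(k)}}(tV)\in B(\bar p,2r)$, the subgradient inequality of \cref{thm:f-convex-subdiff}\cref{thm:f-convex-subdiffi} yields
\begin{equation*}
    t\langle X^{(k)},V\rangle
    \;=\;\langle X^{(k)},\exp^{-1}_{p^{(k)}}q\rangle
    \;\leq\; f(q)-f(p^{(k)})
    \;\leq\; Lt,
\end{equation*}
so $\|X^{(k)}\|\leq L$. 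Hence, after parallel transporting each $X^{(k)}$ along the (unique) geodesic from $p^{(k)}$ to $\bar p$, one obtains a bounded sequence in the finite-dimensional space $T_{\bar p}\mathcal M$, which therefore admits convergent subsequences.

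For the cluster point claim, suppose a subsequence of the transported vectors converges to some $X\in T_{\bar p}\mathcal M$. I would use the subgradient inequality in the form
\begin{equation*}
    f(q)\;\geq\;f(p^{(k)})+\langle X^{(k)},\exp^{-1}_{p^{(k)}}q\rangle
\end{equation*}
for arbitrary $q\in\mathcal M$ and pass to the limit along the subsequence. The inner product on the right converges to $\langle X,\exp^{-1}_{\bar p}q\rangle$ by joint smoothness of the exponential map, smoothness of parallel transport, and compatibility of parallel transport with the metric. For the term $f(p^{(k)})$, convexity plus lsc on a Hadamard manifold implies continuity on $\intdom(f)$ (a sandwich argument using upper boundedness on a ball and the subgradient inequality gives both upper and lower bounds), hence $f(p^{(k)})\to f(\bar p)$. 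Passing to the limit yields $f(q)\geq f(\bar p)+\langle X,\exp^{-1}_{\bar p}q\rangle$ for every $q\in\mathcal M$, i.e.\ $X\in\partial f(\bar p)$.

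The main technical obstacle is the mild bookkeeping required to compare objects in different tangent spaces; once parallel transport is brought in and its continuity properties are used, the proof parallels the Euclidean one. A secondary subtlety is deriving the local Lipschitz property of $f$ near $\bar p\in\intdom(f)$ from convexity and lower semi-continuity on a Hadamard manifold; this is where the assumption $\bar p\in\intdom(f)$ (not merely $\bar p\in\dom(f)$) is essential, and it is exactly the content one can cite from \cite[Proposition 2.5]{WangLiWangYao2015} to shortcut the argument.
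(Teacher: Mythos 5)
Your proof is correct. The paper itself gives no argument here---it simply declares the result an immediate consequence of \cite[Proposition 2.5]{WangLiWangYao2015}---and what you have written is precisely the standard argument behind that citation: local Lipschitz continuity of a convex lsc function on $\intdom(f)$ yields the uniform bound $\lVert X^{(k)}\rVert\leq L$ via the subgradient inequality, and passing to the limit in that inequality (using continuity of $f$ on $\intdom(f)$ and the continuity facts collected in \cref{pr:cont_pi}) identifies every cluster point as an element of $\partial f(\bar p)$. The only point worth flagging is cosmetic: the paper interprets cluster points of $(X^{(k)})_{k\in\mathbb{N}}$ in the topology of the tangent bundle $T\mathcal M$ rather than via parallel transport to $T_{\bar p}\mathcal M$, but since parallel transport along the geodesics from $p^{(k)}$ to $\bar p$ is an isometry converging to the identity as $p^{(k)}\to\bar p$, the two notions agree and your bookkeeping is sound.
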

\begin{definition}
    A function $f\colon\mathcal M \to \overline{\mathbb{R}}$ is said to be 1-\emph{coercive} if there exists a point   ${\bar p}\in \mathcal M$ such that
    \begin{equation*}
        \lim_{d({\bar p},p)\to+\infty} \frac{{f(p)}}{{d({\bar p},p)}} = +\infty.
    \end{equation*}
\end{definition}
The \emph{global minimizer set} of a function $f\colon\mathcal M\to \overline{\mathbb{R}}$  is defined by
\begin{equation*}
\Omega^*:=\{q\in {\mathcal M}\ :\ f(q)\leq f(p), \text{for all } p\in {\mathcal M}\}.
\end{equation*}

\begin{proposition}\label{prop:coercive}
    Assume that $f\colon\mathcal M \to \overline{\mathbb{R}}$ is lsc and 1-\emph{coercive}.  Then the global minimizer set of $f$  is non-empty.
\end{proposition}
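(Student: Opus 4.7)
The plan is to adapt the standard direct-method argument from the Euclidean case, relying on the Hopf–Rinow theorem, which is available because a Hadamard manifold $\mathcal M$ is a complete Riemannian manifold. The key consequence I will use is that every closed and bounded subset of $\mathcal M$ is compact, so bounded sequences in $\mathcal M$ admit convergent subsequences.

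First I would dispose of the trivial case: if $\inf_{p\in\mathcal M} f(p) = +\infty$, then $\Omega^* = \mathcal M \neq \emptyset$. Otherwise, let $m \coloneqq \inf_{p\in\mathcal M} f(p) < +\infty$ and pick a minimizing sequence $(p^{(k)})_{k\in\mathbb N}\subset\mathcal M$ with $f(p^{(k)}) \to m$. In particular the values $f(p^{(k)})$ are bounded above by some constant $C$ for $k$ large. Let $\bar p\in\mathcal M$ be the point furnished by the 1-coercivity assumption. I claim $(d(\bar p, p^{(k)}))_{k\in\mathbb N}$ is bounded; otherwise, along a subsequence $d(\bar p, p^{(k_j)})\to +\infty$, and the definition of 1-coercivity would force $f(p^{(k_j)})/d(\bar p, p^{(k_j)})\to +\infty$, which, combined with $d(\bar p, p^{(k_j)})\to +\infty$, contradicts $f(p^{(k_j)})\leq C$.

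Hence $(p^{(k)})$ lies in a bounded (in particular closed and bounded) subset of $\mathcal M$. By the Hopf–Rinow theorem, this subset is compact, so there is a subsequence $p^{(k_j)}\to p^*\in\mathcal M$. Applying lower semi-continuity of $f$ at $p^*$ (see \cref{def:linf}) yields
\begin{equation*}
    f(p^*) \leq \liminf_{j\to\infty} f(p^{(k_j)}) = m,
\end{equation*}
and since $m$ is the infimum we also have $f(p^*) \geq m$, whence $f(p^*) = m$ and $p^* \in \Omega^*$.

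I do not foresee a genuine obstacle; the only subtle points are invoking Hopf–Rinow to convert boundedness into precompactness (which is where the Hadamard — and specifically the completeness — hypothesis is essential) and handling the vacuous case $m = +\infty$ separately. The 1-coercivity assumption enters solely to force the minimizing sequence to remain in a bounded region around the reference point $\bar p$.
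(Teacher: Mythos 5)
Your proof is correct and takes essentially the same route as the paper's: 1-coercivity confines the search to a closed bounded ball around $\bar p$, which is compact by Hopf--Rinow, and lower semicontinuity then yields attainment of the infimum. The only difference is that the paper invokes the Weierstrass-type attainment theorem for lsc functions on compact sets as a black box (citing Bourbaki), whereas you unpack that step by hand via a minimizing sequence and a convergent subsequence.
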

\begin{proof}
    Take  ${\bar p}\in \mathcal M$ such that    $\lim_{d({\bar p},p)\to+\infty} ({f(p)}/{d({\bar p},p)})= +\infty$. In particular, we  conclude that  $\displaystyle\lim_{d({\bar p},p)\to+\infty } {f(p)} = +\infty$.
    Thus, there exists ${\bar r}>0$ such that
    ${\bar r}<d({\bar p},p)$ implies that $f({\bar p})\leq {f(p)}$.
    Consider the set $B[{\bar p}, {\bar r}]\coloneqq  \{ p\in \mathcal M\ :\ d(p, {\bar p})\leq {\bar r}\}$.
    Since $\mathcal M$ is a Hadamard manifold, the Hopf-Rinow theorem ensures that
    $B[{\bar p}, {\bar r}]$ is compact.
    Thus, taking into account that $f$ is lsc., by~\cite[Theorem~3, p.~361]{Bourbaki1995}
    there exists ${\hat p}\in B[{\bar p}, {\hat r}]$ such that $f({\hat p})\leq f(p)$,
    for all $p\in B[{\bar p}, {\hat r}]$.  Therefore,  ${\bar p}$ or ${\hat p}$ is a  global minimizer of $f$.
\end{proof}
\begin{lemma}\label{L:coercrf}
    Let $g\colon\mathcal M \to\mathbb{R}$ be a $\sigma$-strongly convex function.
    Take ${\bar p}\in \mathcal M$ and $X\in T_{{\bar p}}\mathcal M$.
    Then, the function $f\colon\mathcal M \to\mathbb{R}$ defined by
    $f(p)=g(p)-\big\langle X, \exp^{-1}_{\bar p}p \big\rangle$
    is 1-coercive. Consequently, the global minimizer set of $f$ is non-empty.
\end{lemma}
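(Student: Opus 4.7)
The plan is to exploit the quadratic growth that $\sigma$-strong convexity forces on $g$ and then observe that the subtracted linear term is only of order $d({\bar p},p)$, so the quadratic wins.

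First, since $g\colon\mathcal M \to\mathbb{R}$ is real-valued and $\sigma$-strongly convex, the point ${\bar p}$ lies in the interior of $\dom(g)=\mathcal M$, so by standard Riemannian convex analysis the subdifferential $\partial g({\bar p})$ is non-empty. Pick any $Y\in\partial g({\bar p})$. Applying \cref{thm:f-convex-subdiff}\eqref{thm:f-convex-subdiffii} to $g$ at ${\bar p}$ yields, for every $p\in\mathcal M$,
\begin{equation*}
    g(p) \;\geq\; g({\bar p}) + \bigl\langle Y,\exp^{-1}_{{\bar p}}p\bigr\rangle + \tfrac{\sigma}{2}\,d^2({\bar p},p).
\end{equation*}
Subtracting $\langle X,\exp^{-1}_{\bar p}p\rangle$ from both sides and using $\|\exp^{-1}_{{\bar p}}p\|=d({\bar p},p)$ together with Cauchy--Schwarz gives
\begin{equation*}
    f(p) \;\geq\; g({\bar p}) - \|Y-X\|\,d({\bar p},p) + \tfrac{\sigma}{2}\,d^2({\bar p},p).
\end{equation*}

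Now divide by $d({\bar p},p)$ (considering only $p\neq{\bar p}$) and let $d({\bar p},p)\to+\infty$. The right-hand side becomes $g({\bar p})/d({\bar p},p)-\|Y-X\|+\tfrac{\sigma}{2}d({\bar p},p)$, which tends to $+\infty$. Hence $f$ is 1-coercive.

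For the consequence, the linear term $p\mapsto\langle X,\exp^{-1}_{\bar p}p\rangle$ is smooth (as $\exp^{-1}_{\bar p}$ is a diffeomorphism on a Hadamard manifold), and a convex function which is finite everywhere on $\mathcal M$ is continuous, so $f$ is continuous and a fortiori lower semi-continuous. \cref{prop:coercive} then delivers a non-empty global minimizer set. The only mildly delicate point is invoking non-emptiness of $\partial g({\bar p})$, which is why I rely on $g$ being real-valued to place ${\bar p}$ in the interior of its domain; should one wish to avoid this appeal, the same bound can be obtained by applying the defining strong convexity inequality along geodesics from ${\bar p}$ and letting $t\to0^+$, which delivers the quadratic lower bound without requiring a subgradient to be selected in advance.
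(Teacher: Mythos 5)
Your proof is correct and follows essentially the same route as the paper: select $Y\in\partial g(\bar p)$, invoke the strong-convexity subgradient inequality to obtain the quadratic lower bound, observe that the linear terms are of order $d(\bar p,p)$, divide and pass to the limit, then apply \cref{prop:coercive}. Your explicit Cauchy--Schwarz bound and the remarks on $\partial g(\bar p)\neq\varnothing$ and lower semi-continuity of $f$ only make explicit details the paper leaves implicit.
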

\begin{proof}
    Since the function $g\colon \mathcal M \to\mathbb{R}$ is a $\sigma$-strongly convex, \cref{thm:f-convex-subdiff} \cref{thm:f-convex-subdiffi} implies that
    \begin{equation*}
        g(p)\geq g({\bar p})
        + \langle {Y},\exp^{-1}_{\bar p}p\rangle
        + \frac{\sigma}{2}d^2({\bar p},p),
        \qquad \text{for all } p \in \mathcal M \text{ and all } {Y}\in \partial g({\bar p}).
    \end{equation*}
    Thus, considering that $f(p)=g(p)-\big\langle X, \exp^{-1}_{\bar p}p \big\rangle$
    and using the last inequality we conclude
    \begin{align*}
        \frac{f(p)}{d({\bar p},p)}
        \geq
        \frac{g({\bar p})}{d({\bar p},p)}
        + \Big\langle {Y},\frac{\exp^{-1}_{\bar p}p}{d({\bar p},p)}\Big\rangle
        + \frac{\sigma}{2}d({\bar p},p)
        - \Big\langle X,\frac{\exp^{-1}_{\bar p}p}{d({\bar p},p)}\Big\rangle,
        \qquad\text{ for all } {Y}\in \partial g({\bar p}).
    \end{align*}
    Since $d({\bar p},p)=\,\lVert\exp^{-1}_{\bar p}p\rVert$,
    we obtain that the inner products in the last inequality are bounded. Hence, we have
    \[
        \lim_{d({\bar p},p)\to+\infty }\frac {f(p)}{d({\bar p},p)}=+\infty.
    \]
    Therefore, $f$ is 1-coercive. The second part of the proposition is an immediate consequence of the first one combined with \cref{prop:coercive}.
\end{proof}
The statement and proof of the next proposition can be found
in~\cite[Lemma 2.4, p. 666]{LiLopezMartinMarquez2009}.
\begin{proposition}\label{pr:cont_pi}
    Let ${\bar p}\in \mathcal M$ and $(p^{(k)})_{k\in \mathbb{N}}\subset \mathcal M$
    be such that $\displaystyle\lim_{k\to +\infty}p^{(k)}= {\bar p}$.
    Then the following assertions hold:
\begin{enumerate}
    \item
    \label{pr:cont_pi_i}
    For any $p\in \mathcal M$, we have
    $\displaystyle\lim_{k\to +\infty}\exp_{p^{(k)}}^{-1}p = \exp_{{\bar p}}^{-1}p$
    and $\displaystyle\lim_{k\to +\infty} \exp_p^{-1}p^{(k)} = \exp_p^{-1}{\bar p}$.
    \item
    \label{pr:cont_pi_ii}
    If $X^{(k)}\in T_{p^{(k)}}\mathcal M$ and $\displaystyle\lim_{k\to +\infty}X^{(k)}= {\bar X}$,
    then ${\bar X}\in T_{{\bar p}}\mathcal M$.
    \item
    \label{pr:cont_pi_iii}
    Given $X^{(k)}\in T_{p^{(k)}}\mathcal M$, $Y^{(k)}\in T_{p^{(k)}}\mathcal M$,
    ${\bar X}\in T_{{\bar p}}\mathcal M$, and ${\bar Y}\in T_{{\bar p}}\mathcal M$.
    If $\displaystyle\lim_{k\to +\infty}X^{(k)}= {\bar X}$
    and $\displaystyle\lim_{k\to +\infty}Y^{(k)}= {\bar Y},$
    then $\displaystyle\lim_{k\to +\infty}\langle X^{(k)}, Y^{(k)} \rangle
     = \langle {\bar X},{\bar Y} \rangle$.
\end{enumerate}
\end{proposition}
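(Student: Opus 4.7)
My plan is to derive all three assertions from the smoothness of the standard structural maps on a Hadamard manifold, namely the exponential map, the bundle projection $\pi\colon T\mathcal M\to\mathcal M$, and the Riemannian metric regarded as a smooth map on the Whitney sum $T\mathcal M\oplus T\mathcal M$. The Hadamard hypothesis is crucial here, because it guarantees that $\exp_p$ is a global diffeomorphism for every $p\in\mathcal M$, so the inverse $\exp_p^{-1}$ and, in fact, the joint map $F\colon\mathcal M\times\mathcal M\to T\mathcal M$ given by $F(p,q)\coloneqq \exp_p^{-1}q$ are smooth. This smoothness is the analytical engine behind the entire proposition.

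For the first item, I would first establish the easier limit: since $\exp_p^{-1}\colon\mathcal M\to T_p\mathcal M$ is smooth with fixed base point, continuity at $\bar p$ immediately yields $\exp_p^{-1}p^{(k)}\to \exp_p^{-1}\bar p$ inside the fixed vector space $T_p\mathcal M$. For the other limit, I would apply the joint map $F$ to the pair $(p^{(k)},p)$ and use its continuity at $(\bar p,p)$ to conclude $\exp_{p^{(k)}}^{-1}p\to \exp_{\bar p}^{-1}p$ in the total space $T\mathcal M$.

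The second item follows directly from the continuity of the bundle projection $\pi\colon T\mathcal M\to\mathcal M$. Indeed, $X^{(k)}\to\bar X$ in $T\mathcal M$ forces $p^{(k)}=\pi(X^{(k)})\to\pi(\bar X)$, and by uniqueness of limits in the Hausdorff space $\mathcal M$ we obtain $\pi(\bar X)=\bar p$, that is, $\bar X\in T_{\bar p}\mathcal M$.

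For the third item, I would view the Riemannian metric as a smooth function $\Phi\colon T\mathcal M\oplus T\mathcal M\to\mathbb R$, $\Phi(X,Y)\coloneqq \langle X,Y\rangle$. Under the given hypotheses, the pairs $(X^{(k)},Y^{(k)})$ converge to $(\bar X,\bar Y)$ in $T\mathcal M\oplus T\mathcal M$, and continuity of $\Phi$ yields the asserted convergence of the inner products. The only genuine subtlety throughout the proposition is that the vectors $X^{(k)}$ live in different vector spaces $T_{p^{(k)}}\mathcal M$, so ``convergence'' of $(X^{(k)})$ must be interpreted inside the total space $T\mathcal M$. Making this precise—either through a local trivialization of the tangent bundle or, equivalently, by parallel transporting $X^{(k)}$ along the geodesic $\gamma_{\bar p\,p^{(k)}}$ to the single tangent space $T_{\bar p}\mathcal M$—is the main technical step; once it is in place, each item reduces to the continuity of a smooth map.
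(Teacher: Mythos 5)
Your proof is correct, and it follows essentially the same route as the source the paper relies on: the paper does not prove this proposition itself but cites \cite[Lemma 2.4]{LiLopezMartinMarquez2009}, whose argument likewise reduces each item to the continuity of the structural maps --- the jointly continuous map $(p,q)\mapsto \exp_p^{-1}q$ (using that $\exp$ is a global diffeomorphism on a Hadamard manifold), the bundle projection, and the metric --- with convergence in $T\mathcal M$ interpreted via local trivializations or, equivalently, parallel transport. Your explicit flagging of that last subtlety is exactly the right point to make precise.
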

We end this section recalling   several results from Fenchel duality on Hadamard manifolds, which play an important role in the following sections. It is worth emphasizing that we are limiting our study to finite-dimensional manifolds and that our emphasis is algorithmic.   Consequently, we do not need to use cotangent space for our purposes.  Due to this, we decided to exclusively employ tangent spaces in the following  results of the paper~\cite{SilvaLouzeiroBergmannHerzog2022}.  We begin by  recalling  the defining the conjugate of a proper function.
\begin{definition}\label{def:conj_nova}
Let $f\colon\mathcal M \to \overline{\mathbb{R}}$ be a proper function.  The Fenchel conjugate of  $f$ is
    the function $f^{*} \colon T\mathcal M\to \overline{\mathbb{R}}$
    defined by
    \begin{equation*}
    f^{*} (p,X)\coloneqq \sup_{q\in \mathcal M}
        \bigl\{ \langle X, \exp^{-1}_pq \rangle - f(q) \bigr\},
        \qquad (p,X) \in T\mathcal M.
    \end{equation*}
\end{definition}
\begin{theorem}\label{th:FYIN}
    Let $f\colon\mathcal M \to \overline{\mathbb{R}}$ be a proper function.
    Then, the \emph{Fenchel-Young inequality} holds, i. e.,
    for all $(p,X)\in T\mathcal M$ we have
    \begin{equation*}
    f(q)+f^{*}(p,X)\geq \langle X, \exp^{-1}_pq \rangle,
        \qquad\text{for all } q \in \mathcal M.
    \end{equation*}
\end{theorem}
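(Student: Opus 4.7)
The plan is to observe that the inequality is essentially a tautological restatement of the definition of the supremum, so the proof will be almost immediate from \cref{def:conj_nova}.

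First I would fix an arbitrary pair $(p,X)\in T\mathcal M$ and an arbitrary $q\in\mathcal M$, and split into two cases according to whether $f(q)$ is finite or equals $+\infty$. The case $f(q)=+\infty$ is trivial, since the right-hand side $\langle X,\exp^{-1}_pq\rangle$ is a real number (the exponential map on a Hadamard manifold is everywhere defined, so $\exp^{-1}_pq$ is a well-defined tangent vector) while the left-hand side is at least $+\infty$; one uses here the convention that $f^{*}(p,X)>-\infty$, which is guaranteed by $f$ being proper (so there exists some $q_0$ with $f(q_0)<+\infty$, which forces $f^{*}(p,X)\geq\langle X,\exp^{-1}_pq_0\rangle-f(q_0)>-\infty$).

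In the main case $f(q)<+\infty$, the key step is to apply \cref{def:conj_nova} to the particular point $q$: since $f^{*}(p,X)$ is the supremum over all points of $\mathcal M$ of the quantity $\langle X,\exp^{-1}_{p}\cdot\rangle-f(\cdot)$, we have in particular
\begin{equation*}
f^{*}(p,X)\;\geq\;\langle X,\exp^{-1}_pq\rangle-f(q).
\end{equation*}
Rearranging — which is legitimate because $f(q)$ is finite — yields exactly the claimed inequality $f(q)+f^{*}(p,X)\geq\langle X,\exp^{-1}_pq\rangle$. Since $q$ was arbitrary, the conclusion holds for every $q\in\mathcal M$.

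There is essentially no obstacle here: the only mild subtlety is keeping the extended-real arithmetic honest (in particular ensuring $f^{*}(p,X)\neq -\infty$ so that the rearrangement from ``$A\geq B-C$'' to ``$A+C\geq B$'' is valid), which is handled by properness of $f$. No use is made of convexity, lower semi-continuity, or any curvature hypothesis beyond the fact that $\exp^{-1}_p$ is everywhere defined on a Hadamard manifold.
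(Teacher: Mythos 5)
Your proposal is correct, and it is exactly the standard argument: the paper itself states this theorem without proof (it is recalled from \cite{SilvaLouzeiroBergmannHerzog2022}), where it follows, as you show, directly from \cref{def:conj_nova} by evaluating the supremum at the given point $q$ and rearranging. Your careful handling of the extended-real cases (in particular using properness to rule out $f^{*}(p,X)=-\infty$) is the only nontrivial point, and you have it right.
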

\begin{theorem}
    Let $f\colon \mathcal M\to \overline{\mathbb{R}}$ be a proper lsc  convex function
    and $p\in \mathcal M$.
    Then the function
    $f^{*}(p, \cdot)\colon T_p\mathcal M\to \overline{\mathbb{R}}$
    is convex and proper.
\end{theorem}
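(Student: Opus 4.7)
The plan is to verify convexity and properness separately. Convexity is immediate from the definition of the conjugate: for each fixed $q \in \mathcal M$ the map $X \mapsto \langle X, \exp^{-1}_p q \rangle - f(q)$ is affine on the vector space $T_p \mathcal M$, so $f^*(p, \cdot)$ is the pointwise supremum of an arbitrary family of affine functions on $T_p\mathcal M$, and hence is convex (and in fact lower semi-continuous).

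For properness I would check separately that $f^*(p,\cdot)$ never takes the value $-\infty$ and that it is not identically $+\infty$. The first part is immediate from the properness of $f$: since $\dom f \neq \emptyset$, fix any $q_0 \in \dom f$; using the single trial point $q = q_0$ inside the supremum already gives
\[
    f^*(p, X) \geq \langle X, \exp^{-1}_p q_0 \rangle - f(q_0) > -\infty
\]
for every $X \in T_p \mathcal M$.

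The non-trivial half is to exhibit at least one $X_0 \in T_p \mathcal M$ with $f^*(p, X_0) < +\infty$. The natural strategy is to take $X_0 \in \partial f(p)$: by \cref{thm:f-convex-subdiff} \cref{thm:f-convex-subdiffi} one has $f(q) \geq f(p) + \langle X_0, \exp^{-1}_p q \rangle$ for every $q \in \mathcal M$, which rearranges to the uniform bound $\langle X_0, \exp^{-1}_p q \rangle - f(q) \leq -f(p)$. Taking the supremum over $q$ and using that $p \in \dom f$ (which is forced by $\partial f(p) \neq \emptyset$), one obtains $f^*(p, X_0) \leq -f(p) < +\infty$.

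The main obstacle is the guarantee that $\partial f(p)$ is nonempty: this holds for $p \in \intdom f$ by the standard Riemannian analogue of the Euclidean fact, but may fail on the boundary of $\dom f$ or outside it. For such a $p$ one cannot use $\partial f(p)$ directly; instead one picks a subgradient $Y \in \partial f(q_0)$ at some $q_0 \in \intdom f$ and transports it to $T_p \mathcal M$ along the geodesic $\gamma_{q_0 p}$. The non-positive curvature of $\mathcal M$ is then exploited (through the CAT($0$) comparison inequality for geodesic triangles, which controls the cross term $\langle X, \exp^{-1}_p q \rangle - \langle Y, \exp^{-1}_{q_0} q \rangle$ uniformly in $q$) to combine with the subgradient inequality at $q_0$ and produce the desired finite upper bound on $f^*(p,\cdot)$. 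This is the Riemannian counterpart of the classical existence of an affine minorant for a proper lsc convex function, and is where the argument of~\cite{SilvaLouzeiroBergmannHerzog2022} does the heavy lifting.
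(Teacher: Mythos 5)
First, a point of reference: the paper does not prove this statement at all — it is one of the results recalled verbatim from~\cite{SilvaLouzeiroBergmannHerzog2022} — so there is no in-paper argument to compare yours against. Within your proposal, the convexity argument (pointwise supremum over $q$ of functions affine in $X$) and the lower bound $f^{*}(p,X)>-\infty$ (evaluate the supremum at a single $q_0\in\dom(f)$) are correct and standard. Your argument that $f^{*}(p,\cdot)\not\equiv+\infty$ is also correct \emph{whenever} $\partial f(p)\neq\varnothing$, and you rightly isolate the remaining difficulty, namely base points $p$ at which $f$ admits no subgradient.

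The mechanism you propose for that remaining case, however, does not work. If $Y\in\partial f(q_0)$ and $X\in T_p\mathcal M$ is its parallel transport along $\gamma_{q_0p}$, the cross term $\langle X,\exp^{-1}_pq\rangle-\langle Y,\exp^{-1}_{q_0}q\rangle$ is \emph{not} bounded above uniformly in $q$ on a Hadamard manifold, and no CAT($0$) triangle comparison will make it so. Indeed, let $q=\exp_p\bigl(tX/\lVert X\rVert\bigr)$ run along the geodesic ray determined by $X$, with ideal endpoint $\xi$. Then $\langle X,\exp^{-1}_pq\rangle=t\lVert X\rVert$, while $\langle Y,\exp^{-1}_{q_0}q\rangle=t\lVert Y\rVert\cos\angle_{q_0}(Y,\xi)+o(t)$, where $\angle_{q_0}(Y,\xi)$ is the angle at $q_0$ between $Y$ and the direction toward $\xi$. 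Since $\lVert X\rVert=\lVert Y\rVert$, the difference grows like $t\lVert Y\rVert\bigl(1-\cos\angle_{q_0}(Y,\xi)\bigr)$ and tends to $+\infty$ unless $Y$ points from $q_0$ exactly toward $\xi$; but because of the angle deficit of ideal triangles in non-positive curvature, the ray from $p$ in direction $P_{q_0\to p}Y$ and the ray from $q_0$ in direction $Y$ are asymptotic only in degenerate configurations (e.g.\ $Y$ tangent to $\gamma_{q_0p}$). This is precisely why properness of $f^{*}(p,\cdot)$ at an \emph{arbitrary} base point is non-trivial: one needs a minorant of $f$ of the form $q\mapsto\langle X,\exp^{-1}_pq\rangle+c$ anchored at the prescribed point $p$, and in~\cite{SilvaLouzeiroBergmannHerzog2022} its existence is derived from a separation theorem for the epigraph, not by transporting a subgradient from elsewhere. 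Your proof is therefore complete only for $p$ with $\partial f(p)\neq\varnothing$ (for instance $p\in\intdom(f)$, or $f$ finite-valued); the general case remains a genuine gap.
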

\begin{definition}\label{def:subdif_novo}
    Let $p \in \mathcal M$ and suppose that $f^{*}(p, \cdot)\colon T_p\mathcal M\to \overline{\mathbb{R}}$ is proper. The subdifferential of $f^{*}(p,\cdot )$ at $X \in T_p\mathcal M$,
    denoted by $\partial_2f^{*}(p, X)$, is the set
    \begin{equation*}
        \partial_2f^{*}(p, X)
        = \bigl\{
            Y\in T_p\mathcal M ~:~ f^{*}(p,Z)\geq f^{*}(p,X)
            + \langle Z-X, Y \rangle, \quad \text{ for all }\quad
            Z\in T_p\mathcal M
        \bigr\}.
    \end{equation*}
\end{definition}
Combining~\cite[Remark 3.3]{SilvaLouzeiroBergmannHerzog2022}
with~\cite[Corollary 3.16]{BergmannHerzogSilvaLouzeiroTenbrinckVidalNunez2021},
we obtain the following result.
\begin{theorem}\label{th:FYINPC}
    Let $f\colon\mathcal M\to\overline{\mathbb{R}}$ be a proper function and $p\in \mathcal M$.
    Then, $Y\in \partial_2f^{*}(p, X)$ if and only if
    $f(\exp_p Y)+f^{*}(p,X)= \langle X, Y \rangle$.
\end{theorem}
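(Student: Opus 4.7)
The proof naturally splits into the two implications, and one is genuinely easier than the other.

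For the implication $(\Leftarrow)$, the plan is to start from the hypothesis $f(\exp_p Y) + f^*(p,X) = \langle X, Y\rangle$ and, for an arbitrary $Z \in T_p\mathcal{M}$, bound $f^*(p,Z)$ from below using only \cref{def:conj_nova}. Since $f^*(p,Z)$ is a supremum over $q \in \mathcal{M}$, I simply test it at the specific point $q = \exp_p Y$, which gives $\exp_p^{-1}q = Y$ because $\exp_p$ is a diffeomorphism on a Hadamard manifold. This yields $f^*(p,Z) \geq \langle Z, Y\rangle - f(\exp_p Y)$. Substituting $f(\exp_p Y) = \langle X,Y\rangle - f^*(p,X)$ from the hypothesis converts this into $f^*(p,Z) \geq f^*(p,X) + \langle Z-X, Y\rangle$ for every $Z$, which is exactly the condition $Y \in \partial_2 f^*(p,X)$ of \cref{def:subdif_novo}.

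For the converse $(\Rightarrow)$, the Fenchel--Young inequality (\cref{th:FYIN}) applied at $q=\exp_p Y$ instantly gives $f(\exp_p Y) + f^*(p,X) \geq \langle X, Y\rangle$, so the task reduces to producing the reverse inequality. Here I would recast the subdifferential inequality as the statement that $X$ minimizes the affine-shifted map $Z \mapsto f^*(p,Z) - \langle Z, Y\rangle$ on the vector space $T_p\mathcal{M}$. Taking the infimum yields $f^*(p,X) - \langle X, Y\rangle = -\sup_{Z \in T_p\mathcal{M}}\{\langle Z, Y\rangle - f^*(p,Z)\}$, i.e., minus the Euclidean biconjugate of $g_p \coloneqq f\circ \exp_p$ evaluated at $Y$, since the diffeomorphism $\exp_p$ makes $f^*(p,\cdot)$ exactly the Euclidean Fenchel conjugate of $g_p$ on $T_p\mathcal{M}$.

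The main obstacle, and the reason the author closes the proof by citation rather than by direct computation, is the final identification $g_p^{**}(Y) = g_p(Y) = f(\exp_p Y)$. The inequality $g_p^{**} \leq g_p$ is automatic, but the reverse inequality is precisely a pointwise biconjugate equality, which in the Euclidean theory needs more than mere properness of $f$ (typically lower semicontinuity and convexity). This is exactly the content extracted from \cite[Remark 3.3]{SilvaLouzeiroBergmannHerzog2022} combined with \cite[Corollary 3.16]{BergmannHerzogSilvaLouzeiroTenbrinckVidalNunez2021}, which translate the Euclidean biconjugate statement into the Riemannian setting and deliver $g_p^{**}(Y) = f(\exp_p Y)$. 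Once that identification is in hand, the previous display gives $f^*(p,X) + f(\exp_p Y) \leq \langle X, Y\rangle$, which combined with Fenchel--Young finishes the proof.
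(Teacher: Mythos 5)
The paper offers no proof of this theorem at all: it is obtained by combining two citations, so there is no in-text argument to match your proposal against. Measured on its own terms, your $(\Leftarrow)$ direction is complete and correct — testing the supremum defining $f^*(p,Z)$ at $q=\exp_pY$ and substituting the hypothesized equality is exactly the right elementary argument, and it works for any proper $f$. This is already more than the paper provides.

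The $(\Rightarrow)$ direction, however, has a genuine gap at precisely the point you flag, and it is worth being concrete about why it does not close. Your reduction shows that $Y\in\partial_2f^{*}(p,X)$ forces $f^{*}(p,X)-\langle X,Y\rangle=-\sup_{Z\in T_p\mathcal M}\{\langle Z,Y\rangle-f^{*}(p,Z)\}$, i.e.\ minus the \emph{Euclidean} biconjugate of $g_p=f\circ\exp_p$ on the vector space $T_p\mathcal M$. Two problems arise. First, this Euclidean biconjugate is not the paper's Riemannian biconjugate of \cref{def:bcconj_nova} (which takes a supremum over all of $T\mathcal M$, not over the single fibre $T_p\mathcal M$), so \cref{th:bcec} is not the statement you need; and geodesic convexity of $f$ does not imply Euclidean convexity of $f\circ\exp_p$ on $T_p\mathcal M$, since geodesics of $\mathcal M$ not passing through $p$ do not correspond to straight lines in $T_p\mathcal M$. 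So even granting convexity of $f$, the identification $g_p^{**}(Y)=g_p(Y)$ is not a routine application of the Euclidean biconjugate theorem. Second, the theorem as stated assumes only that $f$ is proper, and under that hypothesis alone the forward implication is simply false: already for $\mathcal M=\mathbb R$, $p=0$, and the W-shaped function $f(t)=\min(\lvert t-1\rvert,\lvert t+1\rvert)$ one has $0\in\partial f^{*}(0)$ but $f(0)+f^{*}(0)=1\neq 0$. So the missing ingredient is not a technicality you may defer — it is a hypothesis (convexity and lower semicontinuity, which do hold for every function the paper applies the theorem to) without which the statement fails. Your writeup is honest in pointing at the citations for this step, but as a proof the forward direction is incomplete, and the specific biconjugate identity you reduce it to is not literally the one the cited results supply.
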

\begin{remark}[{\cite[Remark 3.4]{SilvaLouzeiroBergmannHerzog2022}}]
    \label{rmk:subdif}
    If
    $\mathcal M=\mathbb{R}^{n}$, then $f^{*}(p,X)=f^{*}(X)-\langle X,p \rangle$.
    Moreover,
    $\partial_2f^{*}(p,X) \coloneqq \partial f^{*}(X)+\{-p\}$.
\end{remark}
\begin{definition}\label{def:bcconj_nova}
    The Fenchel biconjugate of a function $f\colon \mathcal M\to \overline{\mathbb{R}}$ is the function
    $f^{**}\colon\mathcal M\to \overline{\mathbb{R}}$ defined by
    \begin{equation*}
    f^{**} (p)\coloneqq \sup_{(q,X)\in T\mathcal M}
        \Bigl\{
            \langle X, \exp^{-1}_pq \rangle - f^*(q, X)
        \Bigr\},
        \qquad \text{ for all } p \in \mathcal M.
    \end{equation*}
\end{definition}
\begin{theorem}
    \label{th:bcec}
    Let $f\colon \mathcal M\to \overline{\mathbb{R}}$ be a proper lsc convex function.
    Then, $f^{**}=f$ holds.
\end{theorem}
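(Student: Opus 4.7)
The plan is to establish the two inequalities $f^{**}\leq f$ and $f\leq f^{**}$ separately. The first one is essentially a rewriting of Fenchel-Young. Fix $p\in\mathcal M$. Reading the inner product in \cref{def:bcconj_nova} with the base-point convention of \cref{def:conj_nova} (so that the two tangent vectors pair in a common tangent space), \cref{th:FYIN} gives, for every $(q,X)\in T\mathcal M$, $f(p)+f^{*}(q,X)\geq \langle X,\exp^{-1}_q p\rangle$. Rearranging and taking the supremum over $(q,X)\in T\mathcal M$ yields $f^{**}(p)\leq f(p)$. This part needs no convexity and no lower semi-continuity.

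For the reverse inequality, the key case is $p\in\intdom(f)$. By the standard Hadamard-manifold theory of convex functions (see \cite{Udriste1994} and the references to \cite{WangLiWangYao2015} used above), $\partial f(p)\neq\emptyset$ on the interior of the domain, so I can pick $X\in\partial f(p)\subset T_p\mathcal M$. By \cref{thm:f-convex-subdiff}\cref{thm:f-convex-subdiffi} the inequality $f(q)\geq f(p)+\langle X,\exp^{-1}_p q\rangle$ holds for every $q$, and plugging this into \cref{def:conj_nova} gives $f^{*}(p,X)\leq -f(p)$. Combined with \cref{th:FYIN} at $q=p$ (which yields $f^{*}(p,X)\geq -f(p)$), this is the Fenchel-Young equality $f(p)+f^{*}(p,X)=0$ (also available directly from \cref{th:FYINPC} with $Y=0$). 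Taking $(p,X)$ as a candidate in the supremum defining $f^{**}(p)$ produces exactly the value $f(p)$, so $f^{**}(p)\geq f(p)$.

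To upgrade from the interior case to all of $\mathcal M$, I would treat $p\in\partial\dom(f)$ by approximation. Choose $p^{(k)}\to p$ with $p^{(k)}\in\intdom(f)$; the previous step gives $f^{**}(p^{(k)})=f(p^{(k)})$. The biconjugate $f^{**}$ is a pointwise supremum of the functions $p\mapsto\langle X,\exp^{-1}_q p\rangle - f^{*}(q,X)$, which are continuous in $p$ by \cref{pr:cont_pi}, and hence $f^{**}$ is lsc. Lower semi-continuity of $f$ is assumed. Combining $f^{**}\leq f$ with the lsc of both sides and the equality on the interior yields $f^{**}(p)=f(p)$ in the limit. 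Points with $p\notin\overline{\dom(f)}$ are trivial, since then $f(p)=+\infty$ and $f^{**}\leq f$ from step one forces equality only in a vacuous sense; a direct unboundedness argument in the supremum using a separating affine functional coming from an interior subgradient closes the gap.

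The main obstacle is the existence of at least one subgradient at each interior point of $\dom(f)$, which is the Riemannian surrogate for the Hahn-Banach separation of the (geodesically convex, closed) epigraph of $f$ from strict hypographs. This step genuinely uses the Hadamard structure — non-positive curvature and the diffeomorphism property of $\exp_p$ — and is the one place where the proof is not formal manipulation of Fenchel-Young. Once it is in place, the argument above is essentially algebraic, and the remaining approximation at boundary points is routine using \cref{pr:cont_pi} and the definition of lsc.
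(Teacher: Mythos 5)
First, note that the paper does not prove this theorem at all: it is recalled verbatim from \cite{SilvaLouzeiroBergmannHerzog2022,BergmannHerzogSilvaLouzeiroTenbrinckVidalNunez2021}, so there is no in-paper argument to match yours against. Judged on its own, your proof of $f^{**}\leq f$ via \cref{th:FYIN} is fine, and your interior-point argument is correct and is essentially the paper's \cref{th:eqsubdifcf} (the equality $f^{*}(p,X)=-f(p)$ for $X\in\partial f(p)$) combined with nonemptiness of the subdifferential on $\intdom(f)$; choosing $(p,X)$ itself as a candidate in the supremum then gives $f^{**}(p)\geq f(p)$ there.

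The gap is in your treatment of points outside $\intdom(f)$, and it is not routine. (i) At boundary points of the domain your limiting argument runs the wrong way: lower semicontinuity of $f^{**}$ gives $f^{**}(p)\leq\liminf_k f^{**}(p^{(k)})$, and lsc of $f$ gives $f(p)\leq\liminf_k f(p^{(k)})$ --- both inequalities sit on the same side, so they do not combine to yield $f^{**}(p)\geq f(p)$. What is actually needed is a lower bound of the form $\liminf_k\bigl(f(p^{(k)})+\langle X^{(k)},\exp^{-1}_{p^{(k)}}p\rangle\bigr)\geq f(p)$, and this is delicate because the subgradients $X^{(k)}$ may blow up as $p^{(k)}$ approaches the boundary; it can be rescued by approaching $p$ along a fixed geodesic from an interior point and using the subgradient inequality at $p^{(k)}$ against that interior point, but that estimate is missing. (ii) For $p\notin\overline{\dom(f)}$ you must show the supremum defining $f^{**}(p)$ is $+\infty$, which requires a vector $V\in T_q\mathcal M$ with $\sup_{z\in\dom(f)}\langle V,\exp^{-1}_qz\rangle<\langle V,\exp^{-1}_qp\rangle$. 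On a Hadamard manifold $\exp^{-1}_q(\dom(f))$ is generally \emph{not} convex in $T_q\mathcal M$, so linear separation in a single tangent space can fail; "a separating affine functional coming from an interior subgradient" does not exist in general. This separation step is precisely the main technical contribution of \cite{SilvaLouzeiroBergmannHerzog2022}, where the freedom to vary the base point $q$ over all of $\mathcal M$ is what makes the family of minorants $z\mapsto\langle X,\exp^{-1}_qz\rangle+c$ rich enough. Your proof correctly identifies subgradient existence as one nontrivial ingredient, but the genuinely Riemannian difficulty lives in the separation/boundary step you dismiss as routine.
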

\begin{theorem}\label{th:eqsubdifcf}
    Let $f\colon \mathcal M\to \overline{\mathbb{R}}$ be a proper convex function.
    Then,
    \begin{equation*}
       X \in \partial f(p)\text{ if and only if }f^{*}(p,{X})=-f(p).
    \end{equation*}
\end{theorem}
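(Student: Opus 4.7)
The plan is to observe that this equivalence reduces almost immediately to the definitions of the subdifferential and the Fenchel conjugate, with the Fenchel-Young inequality (\cref{th:FYIN}) supplying the necessary tight bound. The proof will be short and split into the two implications, mirroring the standard Euclidean argument, with the only ``Riemannian'' ingredient being that $\exp_p^{-1}p=0\in T_p\mathcal M$, so that the point $q=p$ achieves equality in the supremum defining $f^{*}(p,X)$.

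For the forward implication, I would start by assuming $X\in\partial f(p)$. Applying the subdifferential inequality from \cref{def:conj_nova}'s prerequisite setting, namely $f(q)\geq f(p)+\langle X,\exp^{-1}_pq\rangle$ for every $q\in\mathcal M$, I would rearrange to get $\langle X,\exp^{-1}_pq\rangle-f(q)\leq -f(p)$ for all $q$. Taking the supremum over $q\in\mathcal M$ then yields $f^{*}(p,X)\leq -f(p)$. For the reverse inequality, I would plug the single admissible choice $q=p$ into \cref{def:conj_nova}; since $\exp_p^{-1}p=0$, this contributes $\langle X,0\rangle-f(p)=-f(p)$ to the supremum, so $f^{*}(p,X)\geq -f(p)$. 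Combining the two bounds gives $f^{*}(p,X)=-f(p)$.

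For the reverse implication, I would assume $f^{*}(p,X)=-f(p)$. By the definition of the supremum in \cref{def:conj_nova}, for every $q\in\mathcal M$ we have $\langle X,\exp^{-1}_pq\rangle-f(q)\leq f^{*}(p,X)=-f(p)$, which rearranges precisely to $f(q)\geq f(p)+\langle X,\exp^{-1}_pq\rangle$. This is exactly the condition $X\in\partial f(p)$ from the definition of the subdifferential. (Alternatively, one could invoke \cref{th:FYIN} directly, which provides the inequality $f(q)+f^{*}(p,X)\geq \langle X,\exp^{-1}_pq\rangle$ in full generality; substituting $f^{*}(p,X)=-f(p)$ gives the subdifferential inequality at once.)

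I do not anticipate any real obstacle: properness is used only to ensure $f(p)$ is finite (so the equation $f^{*}(p,X)=-f(p)$ makes sense), and convexity is not actually invoked in either direction, though it is natural to the context. The cleanest exposition is probably to give the forward direction by hand to highlight the role of $q=p$ in tightening the supremum, and then to derive the reverse direction as a one-line consequence of \cref{th:FYIN}.
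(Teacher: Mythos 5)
Your proof is correct, and it is the standard argument: the forward direction combines the subdifferential inequality (supremum bound) with the choice $q=p$, where $\exp_p^{-1}p=0$ makes the supremum attained, and the reverse direction is immediate from the definition of the conjugate (or from \cref{th:FYIN}). Note that the paper itself states this theorem without proof, recalling it from the cited Fenchel-duality references, so there is no in-paper argument to compare against; your reasoning is what one finds there. One small refinement: in the reverse direction, properness is what guarantees $f^{*}(p,X)>-\infty$ (since $\dom(f)\neq\varnothing$), and hence the hypothesis $f^{*}(p,X)=-f(p)$ forces $p\in\dom(f)$, which is needed for $X\in\partial f(p)$ to be meaningful under the paper's definition of the subdifferential; you are right that convexity plays no role.
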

\section{Duality in DC optimization in Hadamard manifolds}
\label{sec:Duality}
In this section our aim is to state and study difference of convex optimization problem or DC problem, and its dual problem, called dual DC problem, in the Hadamard setting. The DC problem is defined as follows
\begin{equation}\label{Pr:DCproblem}
\argmin_{p\in \mathcal M} f(p), \qquad \text{where } f(p)\coloneqq g(p)-h(p),
\end{equation}
and $g\colon\mathcal M \to \overline{\mathbb{R}}$ and $h\colon\mathcal M \to \overline{\mathbb{R}}$ are proper, lsc and convex functions.
The DC problem is a non-convex and, in general, a non-smooth problem. In the following we further use the conventions
\begin{align}
    \label{eq:conventions}
    (+\infty)-(+\infty) = +\infty,\quad
    (+\infty)-\lambda = +\infty, \text{ and }
    \lambda- (+\infty) = -\infty,
    \qquad \text{for all } \lambda \in \mathbb{R}.
\end{align}
Similarly to Euclidean context, see \cite{TaoSouad1988}, the dual DC problem of the problem~\eqref{Pr:DCproblem}, is stated as follows
\begin{equation}\label{Pr:Dual}
\argmin_{(p,X) \in T\mathcal M}
    \varphi (p, X),
    \qquad\text{where } \varphi (p, X)
        \coloneqq h^{*}(p,X)-g^{*}(p,X).
\end{equation}
Additional detail concerning the appropriateness of the previous definition will be provided later in Theorem~\ref{equiv:dualpr}. In the following remark, we will look at the details of the relationship between \eqref{Pr:Dual} and its Euclidean counterpart.
\begin{remark}
    \label{rem:DCPonRn}
    If $\mathcal M=\mathbb{R}^{n}$, then $T_p\mathcal M \simeq \mathbb{R}^{n}$
    for all $p\in \mathcal M$.
    Consequently, $T\mathcal M \simeq \mathbb{R}^{n}$.
    Moreover, by using \cref{rmk:subdif}, we obtain
    \begin{align*}
        h^{*}(p,X)-g^{*} (p,X)
        = h^{*}(X)-\langle X,p \rangle - \left( g^{*}(X)-\langle X,p \rangle \right)
        = h^{*}(X)-g^{*}(X), \quad \text{for all }  X \in \mathbb{R}^n.
    \end{align*}
    Therefore, if $\mathcal M=\mathbb{R}^{n}$ then problem~\eqref{Pr:Dual} simplifies to
    \begin{equation}\label{Pr:Dual_euclidean}
        \argmin_{X \in \mathbb{R}^{n} }  h^{*}(X)-g^{*} (X).
    \end{equation}
    In conclusion, for $\mathcal M=\mathbb{R}^{n}$, the dual \eqref{Pr:Dual} of the problem~\eqref{Pr:DCproblem} merges into the dual stated in \cite{TaoSouad1988}.
\end{remark}
To proceed with the study of problems~\eqref{Pr:DCproblem} and~\eqref{Pr:Dual},
for now on we will assume that:
\begin{enumerate}[label={A\arabic*)}, ref={(A\arabic*)}]
    \item%
    \label{it:A1} $g\colon\mathcal M \to \overline{\mathbb{R}}$
        and $h\colon\mathcal M \to \overline{\mathbb{R}}$ are $\sigma$-strongly convex
        and lsc functions, where $\sigma>0$;
    \item%
    \label{it:A2} $f_{\inf}\coloneqq \displaystyle\inf_{x\in\mathcal M}
         f(x) > -\infty$;
    \item%
    \label{it:A3} $\dom (g) \subseteq \intdom(h);$
    \item%
    \label{it:A4} $\partial_2g^{*}(p,X)\neq \varnothing$,
        for every $X \in \dom (g^{*}(p,\cdot))%
            \coloneqq \{ X \in T_p\mathcal M\ : \ g^{*}(p,X)<+\infty \}$.
    \end{enumerate}
Next, we discuss the above assumptions. First, we show that \ref{it:A1} is not restrictive.
\begin{remark}
    Let $q \in \mathcal M$ and $\sigma >0$.
    Consider the function $\mathcal M\ni p \mapsto \frac{\sigma}{2}d^{2}(q, p)$,
    which is $\sigma$-strongly convex, see~\cite[Corollary 3.1]{NetoFerreiraPerez2002}.
    If $\tilde g \colon\mathcal M\to \mathbb{R}$
    and $\tilde h \colon\mathcal M\to \mathbb{R}$ are convex,
    then taking $q\in \mathcal M$ and
    setting $g(p)={\tilde g}(p) + \frac{\sigma}{2}d^{2}(q, p)$
    and $h(p)={\tilde h}(p)+ \frac{\sigma}{2}d^{2}(q, p)$ we obtain two $\sigma$-strongly
    convex functions $g$ and $h$ in $\mathcal M$.
    In addition, $f (p)={\tilde g}(p)-{\tilde h}(p)=g(p)-h(p)$, for all $p\in \mathcal M$.
\end{remark}
\begin{remark}\label{rmk:1}
    If assumption \ref{it:A2} holds, then $\dom (f)=\dom (g)\subseteq \dom (h)$.
    Indeed, if $\dom (g)\nsubseteq \dom (h),$ then there exists $p\in \dom (g)$
    such that $p\notin \dom (h)$, and hence by \eqref{eq:conventions}, we have that
    $f(p)=g(p)-h(p)=g(p)-(+\infty)=-\infty$, which contradicts assumption \ref{it:A2}. Thus, $\dom (g)\subseteq \dom (h)$, which implies that $\dom (g)\subseteq \dom (f)$.
    On the other hand, assume by contradiction that $\dom (f)\nsubseteq \dom (g)$.
    Then, there exists $p\in \dom (f)$ such that $g(p)=+\infty$.
    From \eqref{eq:conventions} we obtain that $f(p)=g(p)-h(p)=(+\infty)-h(p)=+\infty$,
    which contradicts the fact that $p\in \dom (f)$.
    Therefore,  we conclude that $\dom (f)=\dom (g)$. Since under assumption \ref{it:A2},  we have $\dom (f)=\dom (g)$, which implies that  $\dom (g)\subseteq \dom (h)$.  Hence, assumption \ref{it:A3} is only slightly more restrictive than assumption \ref{it:A2}.
    We also note that if $\dom (h)=\mathcal M$, then assumption~\ref{it:A3} holds,
    and if $\dom (g^{*}(p,\cdot)) = T_p\mathcal M$,
    then assumption~\ref{it:A4} holds.
    It is worth to note that assumption~\ref{it:A4} is used here to establish
    the relationship between problems~\eqref{Pr:DCproblem} and~\eqref{Pr:Dual}.
\end{remark}

A necessary condition for the point $p^{*}\in \mathcal M$ to be a local minimum
of $f = g-h$ is that
$0\in \partial f (p^{*})\subset \partial g(p^{*})-\partial h(p^{*})$.
Hence, if $p^{*}\in \mathcal M$ is the solution of problem~\eqref{Pr:DCproblem},
then $\partial h(p^{*}) \subset\partial g(p^{*})$.
Consequently, $\partial g(p^{*}) \cap \partial h(p^{*}) \neq\varnothing$.
In this sense, we define a~\emph{critical point} of problem~\eqref{Pr:DCproblem}.
\begin{definition}%
    \label{def:critpoint}
    A point $p^{*}\in \mathcal M$ is a critical point of $f$ in~\eqref{Pr:DCproblem}
    if $\partial g(p^{*}) \cap \partial h(p^{*}) \neq\varnothing$.
\end{definition}
The next lemma establishes a necessary condition for a point
$({\bar p},{\bar X})\in T\mathcal M$
be a solution of problem~\eqref{Pr:Dual}.
\begin{lemma}
    \label{le:ncsoldual}
    If $({\bar p},{\bar X})$ is a solution of problem~\eqref{Pr:Dual}, then
    $\partial_2g^{*}({\bar p},{\bar X}) \subseteq \partial_2h^{*}({\bar p},{\bar X})$
    holds.
\end{lemma}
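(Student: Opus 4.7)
The plan is to restrict the dual objective $\varphi$ to the fiber $T_{\bar p}\mathcal M$ of the tangent bundle over the fixed base point $\bar p$, reducing the assertion to a standard subgradient manipulation in convex analysis on a single vector space. Both $g^{*}(\bar p,\cdot)$ and $h^{*}(\bar p,\cdot)$ are convex on $T_{\bar p}\mathcal M$ by the theorem preceding \cref{def:subdif_novo}, so on this fiber the dual problem becomes an ordinary unconstrained DC minimization to which the classical machinery applies without modification.

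Concretely, testing the minimality of $(\bar p,\bar X)$ only against pairs $(\bar p,Y)$ with $Y\in T_{\bar p}\mathcal M$ yields
\[
    g^{*}(\bar p,Y) - g^{*}(\bar p,\bar X)
    \leq h^{*}(\bar p,Y) - h^{*}(\bar p,\bar X),
    \qquad \forall\, Y\in T_{\bar p}\mathcal M.
\]
Next, I would pick an arbitrary $Z\in \partial_2 g^{*}(\bar p,\bar X)$ (if this set is empty the inclusion is vacuous); \cref{def:subdif_novo} then provides $\langle Y-\bar X, Z\rangle \leq g^{*}(\bar p,Y) - g^{*}(\bar p,\bar X)$ for every $Y\in T_{\bar p}\mathcal M$. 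Chaining the two estimates gives
\[
    h^{*}(\bar p,Y) \geq h^{*}(\bar p,\bar X) + \langle Y-\bar X, Z\rangle,
    \qquad \forall\, Y\in T_{\bar p}\mathcal M,
\]
which is precisely the statement $Z\in \partial_2 h^{*}(\bar p,\bar X)$.

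I do not foresee a serious obstacle; the only point requiring a little care is bookkeeping with the extended arithmetic of \eqref{eq:conventions}. If $g^{*}(\bar p,\bar X) = +\infty$ then no $Z$ can satisfy the defining subgradient inequality in a nontrivial way (since $g^{*}(\bar p,\cdot)$ is proper), so $\partial_2 g^{*}(\bar p,\bar X)=\varnothing$ and the inclusion holds trivially. In the finite case, the minimality condition automatically forces $h^{*}(\bar p,\bar X)<+\infty$ as well, and the chain of inequalities above is unambiguous.
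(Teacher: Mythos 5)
Your argument is correct and is essentially identical to the paper's own proof: both restrict the minimality of $({\bar p},{\bar X})$ to the fiber $T_{\bar p}\mathcal M$, obtain the inequality comparing the increments of $h^{*}({\bar p},\cdot)$ and $g^{*}({\bar p},\cdot)$, and chain it with the subgradient inequality from \cref{def:subdif_novo} for an arbitrary $Z\in\partial_2 g^{*}({\bar p},{\bar X})$. Your extra remark on the extended arithmetic and the vacuous case is a harmless refinement the paper omits.
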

\begin{proof}
    Let $({\bar p},{\bar X})$ be a solution of problem~\eqref{Pr:Dual}.
    Then,
    $h^{*}(p, Y)- g^{*} (p,Y)\geq h^{*}({\bar p},{\bar X}) -g^{*} ({\bar p},{\bar X})$,
    for all $(p,Y) \in T\mathcal M$. Thus, we have
    \begin{align*}
        h^{*}({\bar p},Y)-h^{*}({\bar p},{\bar X})
        \geq
        g^{*} ({\bar p},Y) -g^{*} ({\bar p},{\bar X}),
        \qquad \text{for all}\quad Y \in T_{{\bar p}}\mathcal M.
    \end{align*}
    Take $Z\in \partial_2g^{*}({\bar p},{\bar X})$.
    By \cref{def:subdif_novo}, we have
    $g^{*}({\bar p},Y)-g^{*}({\bar p},{\bar X}) \geq \langle Y-{\bar X}, Z \rangle$,
    for all $Y \in T_{{\bar p}}\mathcal M$,
    which combined with the last inequality yields
    \begin{align*}
        h^{*}({\bar p},Y)-h^{*}({\bar p},{\bar X})
        \geq \langle Y-{\bar X}, Z \rangle,
        \qquad \text{for all}\quad Y \in T_{{\bar p}}\mathcal M.
    \end{align*}
    This implies, by \cref{def:subdif_novo},  that $v\in \partial_2h^{*}({\bar p},{\bar X})$,
    and the statement is proved.
\end{proof}
\begin{remark}
    If $\mathcal M=\mathbb{R}^{n}$, then by using \cref{rmk:subdif} we have
    $\partial_2g^{*}({\bar p},{\bar X}) = \partial g^{*}({\bar X})-\{{\bar p} \}$
    and $\partial_2h^{*}({\bar p},{\bar X}) = \partial h^{*}({\bar X})-\{{\bar p} \}$.
    Thus, from \cref{rem:DCPonRn} and \cref{le:ncsoldual} we conclude that
    if ${\bar X} \in \mathbb{R}^{n}$ is a solution of problem~\eqref{Pr:Dual_euclidean},
    then we have $\partial g^{*}({\bar X})\subseteq \partial h^{*}({\bar X})$,
    which  yields  \cite[Theorem~2.1~(2)]{TaoSouad1988}.
\end{remark}

We have already defined the critical point for the primal problem in \cref{def:critpoint}, so let us continue on dual problem. Please keep in mind that, it follows from \cref{le:ncsoldual}, that if $({\bar p},{\bar X})$ is a solution
of the problem~\eqref{Pr:Dual}, then the set
$\partial_2h^{*}({\bar p},{\bar X})\cap \partial_2g^{*}({\bar p},{\bar X})$
is non-empty. Hence, we define the notion of critical point for the problem~\eqref{Pr:Dual} as follows:
\begin{definition}
    A point $({\bar p},{\bar X})$ is \emph{a critical point} for problem~\eqref{Pr:Dual} if $
        \partial_2h^{*}({\bar p},{\bar X})\cap \partial_2g^{*}({\bar p},{\bar X})
        \neq\varnothing.$
\end{definition}
\begin{remark}
    If $\mathcal M=\mathbb{R}^{n}$, then by using \cref{rmk:subdif} we have
    $\partial_2g^{*}({\bar p},{\bar X}) = \partial g^{*}({\bar X})-\{{\bar p} \}$ and
    $\partial_2h^{*}({\bar p},{\bar X}) = \partial h^{*}({\bar X})-\{{\bar p} \}$.
    Thus, if $({\bar p},{\bar X})$ is a critical point of problem~\eqref{Pr:Dual},
    then there exist
    $Z\in \partial_2h^{*}({\bar p},{\bar X})\cap \partial_2g^{*}({\bar p},{\bar X})$.
    Hence,
    $Z+{\bar p} \in \partial g^{*}({\bar X})\cap \partial h^{*}({\bar X})\neq\varnothing$.
    Therefore, ${\bar X}\in \mathbb{R}^{n}$ is
    a critical point of problem~\eqref{Pr:Dual_euclidean}.
\end{remark}
To proceed with our analysis we need the next lemma. For a proof of it see \cite[p. 46]{BartleSherbert2000}.
\begin{lemma}\label{le:iterinf}
    Let $X $ and $Y$ be non-empty sets and $f\colon X\times Y \to \mathbb{R}$ a function.
    Then,  it holds
    \begin{align*}
        \inf_{(x,y)\in X\times Y} f(x,y)
        = \inf_{x\in X} \inf_{y \in Y} f(x,y)
        = \inf_{y \in Y} \inf_{x \in X} f(x,y).
    \end{align*}
\end{lemma}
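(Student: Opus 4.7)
The plan is to reduce the double equality to a single one by symmetry: once we prove
\[
    \inf_{(x,y)\in X\times Y} f(x,y) = \inf_{x\in X}\inf_{y\in Y} f(x,y),
\]
the other equality follows by swapping the roles of $X$ and $Y$ (and replacing $f(x,y)$ by the function $(y,x)\mapsto f(x,y)$). So the task is to show two inequalities between an infimum over a product set and its iterated counterpart. Both rely only on the defining order-properties of the infimum in $\mathbb{R}$ (and extend to $\overline{\mathbb{R}}$ without change), so no topology or structure on $X,Y$ is needed.

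For the inequality $\inf_{(x,y)\in X\times Y} f(x,y)\le \inf_{x\in X}\inf_{y\in Y} f(x,y)$, I would fix arbitrary $x\in X$ and $y\in Y$. Since $(x,y)\in X\times Y$, the definition of infimum gives $f(x,y)\ge \inf_{(x',y')\in X\times Y} f(x',y')$. The right-hand side does not depend on $y$, so taking $\inf_{y\in Y}$ on the left preserves the bound, yielding $\inf_{y\in Y} f(x,y)\ge \inf_{(x',y')\in X\times Y} f(x',y')$. Taking $\inf_{x\in X}$ of this, the right-hand side still being independent of $x$, gives the claimed inequality.

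For the reverse inequality $\inf_{(x,y)\in X\times Y} f(x,y)\ge \inf_{x\in X}\inf_{y\in Y} f(x,y)$, I would fix $(x,y)\in X\times Y$ arbitrarily. Then $f(x,y)\ge \inf_{y'\in Y} f(x,y')$ by the definition of infimum, and in turn $\inf_{y'\in Y} f(x,y')\ge \inf_{x'\in X}\inf_{y'\in Y} f(x',y')$. Combining these gives $f(x,y)\ge \inf_{x'\in X}\inf_{y'\in Y} f(x',y')$ for every $(x,y)\in X\times Y$, and passing to the infimum on the left establishes the inequality.

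There is no real obstacle here; the only subtle point is bookkeeping on the extended real line, i.e.\ ensuring the inequalities remain valid if $f$ or its infima take the value $-\infty$ (for instance under the conventions in \eqref{eq:conventions}), but since all manipulations use only the order structure of $\overline{\mathbb{R}}$, they carry over verbatim. Because the statement is a standard elementary result, I would simply cite \cite[p.~46]{BartleSherbert2000} rather than repeat the argument in detail.
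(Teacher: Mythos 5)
Your proof is correct and complete: the two inequalities you establish (product infimum $\le$ iterated infimum, and the reverse) are exactly the standard order-theoretic argument, and the symmetry reduction handles the second equality. The paper itself offers no proof at all, deferring entirely to the citation \cite[p.~46]{BartleSherbert2000}, which is also where you end up, so your write-up simply supplies the elementary details the paper omits.
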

The next theorem presents the relation between the optimum values of
problems~\eqref{Pr:DCproblem} and~\eqref{Pr:Dual}.
\begin{theorem}
    \label{equiv:dualpr}
  Let   $g\colon\mathcal M \to \overline{\mathbb{R}}$ and $h\colon\mathcal M \to \overline{\mathbb{R}}$  be  proper, lsc and convex functions. Then,  there holds
    \begin{equation*}
        \inf_{(q,X) \in T\mathcal M}
                \Bigl\{ h^{*}(q,X)-g^{*} (q,X) \Bigr\}
        = \inf_{p\in \mathcal M}\left\{ g(p) - h(p) \right\}.
    \end{equation*}
\end{theorem}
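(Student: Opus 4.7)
\medskip

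\noindent\textbf{Proof plan.} The plan is to unfold both conjugates into their defining suprema, convert these into infima by sign reversal, and then swap the order of infimization via \cref{le:iterinf} so that one of the two nested infima collapses to the Fenchel biconjugate $h^{**}$. Applying \cref{th:bcec} then replaces $h^{**}$ with $h$ and delivers the identity.

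More concretely, I start from the right-hand side of the identity inside the outer infimum. By \cref{def:conj_nova},
\begin{equation*}
-g^{*}(q,X) = \inf_{p\in\mathcal M}\bigl\{ g(p) - \langle X, \exp_q^{-1} p\rangle\bigr\},
\end{equation*}
so that, using the conventions in~\eqref{eq:conventions} to make the sum $h^{*}(q,X)+(\text{infimum})$ meaningful, one can write
\begin{equation*}
h^{*}(q,X) - g^{*}(q,X) = \inf_{p\in\mathcal M}\bigl\{ g(p) + h^{*}(q,X) - \langle X, \exp_q^{-1} p\rangle\bigr\}.
\end{equation*}
Taking the infimum over $(q,X)\in T\mathcal M$ and invoking \cref{le:iterinf} to exchange the order of the two infima, I get
\begin{equation*}
\inf_{(q,X)\in T\mathcal M}\!\bigl\{ h^{*}(q,X) - g^{*}(q,X)\bigr\}
= \inf_{p\in\mathcal M}\Bigl\{ g(p) + \inf_{(q,X)\in T\mathcal M}\bigl\{ h^{*}(q,X) - \langle X,\exp_q^{-1} p\rangle\bigr\}\Bigr\}.
\end{equation*}
The inner infimum is $-\sup_{(q,X)\in T\mathcal M}\{\langle X,\exp_q^{-1}p\rangle - h^{*}(q,X)\} = -h^{**}(p)$ by \cref{def:bcconj_nova}, so \cref{th:bcec} applied to the proper, lsc, convex function $h$ gives $h^{**}(p) = h(p)$ for every $p\in\mathcal M$, and the claim follows.

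The only delicate point is bookkeeping for the extended-real arithmetic in~\eqref{eq:conventions}: when $h^{*}(q,X) = +\infty$ one must check that the indeterminate form $(+\infty)-(+\infty)$ appearing in $h^{*}(q,X)-g^{*}(q,X)$ is consistent with the manipulation above, and, symmetrically, that when $g(p) = +\infty$ the term $g(p) + h^{*}(q,X) - \langle X,\exp_q^{-1}p\rangle$ behaves as expected. In both cases the conventions in~\eqref{eq:conventions} fix the values to $+\infty$, which simply shrinks the sets over which the infima are effective without affecting their values; there is no need to invoke properness beyond what already guarantees that $g$ and $h$ are not identically $+\infty$. The remaining steps are purely formal applications of \cref{le:iterinf,def:bcconj_nova,th:bcec}, which I expect to be the most robust part of the argument.
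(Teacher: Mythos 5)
Your proof is correct and is essentially the paper's own argument run in the opposite direction: you start from the dual infimum, unfold $-g^{*}$ via \cref{def:conj_nova}, swap the nested infima with \cref{le:iterinf}, and recognize $-h^{**}=-h$ via \cref{def:bcconj_nova} and \cref{th:bcec}, whereas the paper starts from the primal side, replaces $h$ by $h^{**}$ first, and recognizes $-g^{*}$ last. Since every step is an equality and both proofs pass through the identical double infimum $\inf\inf\{g(p)+h^{*}(q,X)-\langle X,\exp_q^{-1}p\rangle\}$ with the same lemmas, this is the same proof.
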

\begin{proof}
    Since $h$ is convex, \cref{th:bcec} implies that $h^{**}=h$.
    Thus, using \cref{def:bcconj_nova} we have
    \begin{align*}
     \inf_{p\in \mathcal M} \{g(p)-h(p)\}
     & = \inf \{g(p)-h^{**}(p)\ : \ \in \mathcal M\}
     \\
     & = \inf \Biggl\{
            g(p) - \sup_{(q,X)\in T\mathcal M}
                \Bigl\{
                    \langle X,\exp^{-1}_qp \rangle - h^{*}(q,X)
                \Bigr\}
            \ : \ p\in \mathcal M
        \Biggr\}.
    \end{align*}
  Since $\displaystyle\sup_{(q,X)\in T\mathcal M}\{\langle X,\exp^{-1}_qp \rangle - h^{*}(q,X) \}=-\displaystyle\inf_{(q,X)\in T\mathcal M}\{h^{*}(q,X) -\langle X,\exp^{-1}_qp \rangle \}$, the last equality is equivalent to
    \begin{equation*}
        \inf_{p\in \mathcal M} \{g(p)-h(p)\}
        = \inf_{p\in \mathcal M} \inf_{(q,X)\in T\mathcal M}
        \Bigl\{ g(p) + h^{*}(q,X) -\langle X,\exp^{-1}_qp \rangle \Bigr\},
    \end{equation*}
    which, using \cref{le:iterinf}, can still be expressed equivalently as
    \begin{equation*}
        \inf_{p\in \mathcal M} \{g(p)-h(p)\}
        = \inf_{(q,X)\in T\mathcal M} \inf_{p\in \mathcal M}
            \Bigl\{ g(p) + h^{*}(q,X) -\langle X,\exp^{-1}_qp \rangle \Bigr\}.
    \end{equation*}
    Due to  $\inf_{p\in \mathcal M} \{ g(p) + h^{*}(q,X) -\langle X,\exp^{-1}_qp \rangle \}= h^{*}(q,X) - \sup_{p\in \mathcal M} \langle X,\exp^{-1}_qp \rangle - \{ g(p) \}$, the final equality is as follows
    \begin{equation*}
    \inf_{p\in \mathcal M} \{g(p)-h(p)\}
    = \inf_{(q,X)\in T\mathcal M}
        \Biggl\{
            h^{*}(q,X) - \sup_{p\in \mathcal M}
                \bigl\{ \langle X,\exp^{-1}_qp \rangle - g(p) \bigr\}
        \Biggr\},
    \end{equation*}
    which, by using \cref{def:conj_nova}, yields the desired equality
    and the proof is concluded.
\end{proof}
\begin{theorem}\label{th:emp}
    The following statements hold:
    \begin{enumerate}
        \item
        \label{th:empi}
        If ${\bar p}\in \mathcal M$ is a solution of problem~\eqref{Pr:DCproblem},
        then $({\bar p},{\bar Y})\in T\mathcal M$ is a solution of
        the problem~\eqref{Pr:Dual},
        for all ${\bar Y} \in \partial h({\bar p}) \cap \partial g({\bar p})$.
        \item
        \label{th:empii}
        If $({\bar p},{\bar Y})\in T\mathcal M$ is a solution of
        problem~\eqref{Pr:Dual}, for some
        ${\bar Y} \in \partial h({\bar p}) \cap \partial g({\bar p})$,
        then ${\bar p}\in \mathcal M$ is a solution of problem~\eqref{Pr:DCproblem}.
    \end{enumerate}
\end{theorem}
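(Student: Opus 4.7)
The plan is to reduce both items to a single key identity: for any $\bar Y \in \partial g(\bar p)\cap \partial h(\bar p)$, one has $\varphi(\bar p,\bar Y) = f(\bar p)$. Once this identity is available, both statements follow instantly from the strong duality equality in \cref{equiv:dualpr}, which asserts $\inf_{p\in\mathcal M} f(p) = \inf_{(q,X)\in T\mathcal M}\varphi(q,X)$.

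To establish the identity, I invoke \cref{th:eqsubdifcf} twice. From $\bar Y\in \partial g(\bar p)$ I get $g^{*}(\bar p,\bar Y) = -g(\bar p)$, and from $\bar Y \in \partial h(\bar p)$ I get $h^{*}(\bar p,\bar Y) = -h(\bar p)$. Since the existence of a subgradient at $\bar p$ for each of $g$ and $h$ forces $\bar p\in \dom(g)\cap\dom(h)$, the values $g(\bar p)$ and $h(\bar p)$ are finite and the subtraction is unambiguous with respect to the conventions in \eqref{eq:conventions}. Therefore
\[
\varphi(\bar p,\bar Y) = h^{*}(\bar p,\bar Y)-g^{*}(\bar p,\bar Y) = -h(\bar p)-\bigl(-g(\bar p)\bigr) = g(\bar p)-h(\bar p) = f(\bar p).
\]

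For item \cref{th:empi}, if $\bar p$ solves \eqref{Pr:DCproblem} then $f(\bar p)=\inf_{p\in\mathcal M} f(p)$, which by \cref{equiv:dualpr} equals $\inf_{(q,X)\in T\mathcal M}\varphi(q,X)$; combining with the identity gives $\varphi(\bar p,\bar Y)=\inf_{(q,X)\in T\mathcal M}\varphi(q,X)$, so $(\bar p,\bar Y)$ is a dual minimizer. For item \cref{th:empii}, if $(\bar p,\bar Y)$ solves \eqref{Pr:Dual} then the identity together with \cref{equiv:dualpr} yields $f(\bar p)=\varphi(\bar p,\bar Y)=\inf_{(q,X)\in T\mathcal M}\varphi(q,X)=\inf_{p\in\mathcal M} f(p)$, so $\bar p$ is a primal minimizer.

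I do not anticipate any genuine obstacle; the only care required is bookkeeping with the extended-real arithmetic to ensure that the subtraction $h^{*}(\bar p,\bar Y)-g^{*}(\bar p,\bar Y)$ is well-defined, and this is automatic once one notes that the hypothesis $\bar Y\in \partial g(\bar p)\cap\partial h(\bar p)$ places $\bar p$ simultaneously in $\dom(g)$ and $\dom(h)$, making all four numbers $g(\bar p), h(\bar p), g^{*}(\bar p,\bar Y), h^{*}(\bar p,\bar Y)$ finite.
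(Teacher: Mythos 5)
Your proposal is correct and follows essentially the same route as the paper: both derive the identity $h^{*}(\bar p,\bar Y)-g^{*}(\bar p,\bar Y)=g(\bar p)-h(\bar p)$ from \cref{th:eqsubdifcf} applied to each subgradient membership, and then conclude both directions via the strong duality equality of \cref{equiv:dualpr}. Your extra remarks on finiteness and the extended-real conventions are a welcome bit of added care but do not change the argument.
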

\begin{proof}
    To prove \cref{th:empi}, assume that ${\bar p}\in \mathcal M$ is a solution of
    problem~\eqref{Pr:DCproblem}.
    Thus, we have $\partial h({\bar p}) \cap \partial g({\bar p}) \neq \varnothing$.
    Let ${\bar Y} \in \partial h({\bar p})\cap \partial g({\bar p})$.
    Since $g$ and $h$ are convex, by \cref{th:eqsubdifcf} we have
    $-g^{*}({\bar p},{\bar Y}) = g({\bar p})$ and
    $ h^{*}({\bar p},{\bar Y}) = -h({\bar p})$, which implies that
    $h^{*}({\bar p},{\bar Y})-g^{*}({\bar p},{\bar Y}) = g({\bar p})-h({\bar p})$.
    Using again that ${\bar p}\in \mathcal M$ is a solution of problem~\eqref{Pr:DCproblem},
    the last equality together with \cref{equiv:dualpr} ensure that $({\bar p},{\bar Y})$
    is a solution of problem~\eqref{Pr:Dual}, and hence, the \cref{th:empi} is proved.
    We proceed to prove \cref{th:empii}. To this end, we assume that $({\bar p},{\bar Y})$
    is a solution of problem~\eqref{Pr:Dual} with
    ${\bar Y}\in \partial h({\bar p}) \cap \partial g({\bar p})$.
    Since $g$ and $h$ are convex and
    ${\bar Y} \in \partial h({\bar p}) \cap \partial g({\bar p})$,
    it follows from \cref{th:eqsubdifcf} that $-g^{*}({\bar p},{\bar Y}) = g({\bar p})$
    and $ h^{*}({\bar p},{\bar Y}) = -h({\bar p})$, which implies
    \begin{equation} \label{eq:eqif}
        g({\bar p})-h({\bar p})=h^{*}({\bar p},{\bar Y})-g^{*} ({\bar p},{\bar Y})
        =\inf_{(p,X) \in T\mathcal M}
            \bigl\{ h^{*}(p,X)-g^{*} (p,X) \bigr\}.
    \end{equation}
    On the other hand, \cref{equiv:dualpr} implies that
    \begin{equation*}
        \inf_{(p,X) \in T\mathcal M}
            \bigl\{ h^{*}(p,X)-g^{*} (p,X)\bigr\}
        = \inf_{q\in \mathcal M} \bigl\{ g(q) - h(q) \bigr\}
        \leq g({\bar p})-h({\bar p})
    \end{equation*}
    Combining the last inequality with \eqref{eq:eqif} yields
    $g({\bar p})-h({\bar p}) = \inf_{q\in \mathcal M}\bigl\{ g(q) - h(q) \bigr\}$.
    Hence, ${\bar p}\in \mathcal M$ is a solution of problem~\eqref{Pr:DCproblem}.
\end{proof}
\section{DCA on Hadamard manifolds}%
\label{sec:DCA_manifolds}
The aim of this section is present an extension of the DCA to Hadamard manifolds.
To this end, we first propose an extension of the classical DCA, which is based
on the Fenchel conjugate introduced in \cref{def:conj_nova}.
As the DCA is dependent on the Fenchel conjugate of the  first component of the objective function, which is in general difficult to compute, we provide a much simpler version of DCA on Hadamard manifolds based on a first-order approximation of the second component.
We also show the well-definition of these algorithms and
their equivalence in the Riemannian setting, such as in the linear setting.
The DCA based on Fenchel conjugate is stated in \cref{Alg:DCA1}, and
the second version in \cref{Alg:DCA2}.

\begin{algorithm}[hbp]
    \caption{The DC Algorithm on Hadamard Manifolds (DCA1)}\label{Alg:DCA1}
    \begin{algorithmic}[1]
        \STATE {Choose an initial point $p^{(0)}\in \dom (g) $. Set $k=0$.}
        \STATE{Take $X^{(k)}\in\partial h(p^{(k)})$, and compute
        \begin{equation}
            \label{eq:DCAS_natural}
            \begin{split}
                Y^{(k)}&\in \partial_2 g^{*}(p^{(k)},X^{(k)}),
                \\
                p^{(k+1)} &\coloneqq \exp_{p^{(k)}}Y^{(k)}.
            \end{split}
        \end{equation}
        }
        \STATE{If $p^{(k+1)} =p^{(k)}$, then STOP and return $p^{(k)}$. Otherwise, go to Step~4.}
        \STATE{Set $k \leftarrow k+1$ and go to Step~2.}
    \end{algorithmic}
\end{algorithm}

As mentioned before, \cref{Alg:DCA1} relies on the computation of the Fenchel conjugate, which can be difficult to compute in practice. However, this algorithm is conceptually useful and can be shown to be is equivalent to more practical and computable algorithm that does not rely on the Fenchel conjugate. The following two results will be used to demonstrate the well-definedness of \cref{Alg:DCA1}.
\begin{lemma}
    \label{le:wedda}
    If $p\in \dom (h)$ and $Y\in\partial h(p)$,
    then $\dom (h^{*}(p,\cdot))\subseteq \dom (g^{*}(p,\cdot))$ and
    \begin{equation*}
        Y \in dom(g^{*}(p,\cdot))
        =\{ X \in T_p\mathcal M\ : \ g^{*}(p,X)<+\infty \}.
    \end{equation*}
    In particular, $\partial_2 g^{*}(p,Y)\neq \varnothing$.
\end{lemma}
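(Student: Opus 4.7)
The plan is to establish the inclusion $\dom(h^{*}(p,\cdot)) \subseteq \dom(g^{*}(p,\cdot))$ by exploiting the global lower bound $f_{\inf} > -\infty$ from assumption \ref{it:A2}. The key observation is that, by \cref{rmk:1}, $\dom(g) \subseteq \dom(h)$, so for every $q \in \dom(g)$ both $g(q)$ and $h(q)$ are finite and $g(q) - h(q) = f(q) \geq f_{\inf}$. This lets me absorb the difference $h(q) - g(q) = -f(q)$ into an upper bound and thereby compare $g^{*}(p,\cdot)$ with $h^{*}(p,\cdot)$.

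To make this precise, I would take an arbitrary $X \in \dom(h^{*}(p,\cdot))$ and apply the Fenchel-Young inequality from \cref{th:FYIN}, which gives $\langle X, \exp_p^{-1} q\rangle - h(q) \leq h^{*}(p,X) < +\infty$ for every $q \in \mathcal M$. For $q \in \dom(g) \subseteq \dom(h)$ this yields
\begin{equation*}
    \langle X, \exp_p^{-1} q\rangle - g(q)
    = \bigl[\langle X, \exp_p^{-1} q\rangle - h(q)\bigr] + \bigl[h(q) - g(q)\bigr]
    \leq h^{*}(p,X) - f_{\inf},
\end{equation*}
while for $q \notin \dom(g)$ the quantity $\langle X, \exp_p^{-1} q\rangle - g(q)$ equals $-\infty$ by the conventions \eqref{eq:conventions}. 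Taking the supremum over $q \in \mathcal M$ in \cref{def:conj_nova} then gives $g^{*}(p,X) \leq h^{*}(p,X) - f_{\inf} < +\infty$, so $X \in \dom(g^{*}(p,\cdot))$.

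For the second claim, since $p \in \dom(h)$ and $Y \in \partial h(p)$, \cref{th:eqsubdifcf} applied to $h$ yields $h^{*}(p,Y) = -h(p) \in \mathbb{R}$, hence $Y \in \dom(h^{*}(p,\cdot))$. Combined with the inclusion just established, this puts $Y \in \dom(g^{*}(p,\cdot))$, and the final assertion $\partial_2 g^{*}(p,Y) \neq \varnothing$ then follows directly from assumption \ref{it:A4}. The main subtlety I anticipate is simply identifying which hypothesis does the work: once one recognizes that \ref{it:A2} is the right lever and that Fenchel-Young paired with the trivial identity $h - g = -f$ furnishes the pointwise bound, the remainder reduces to routine bookkeeping and an invocation of \ref{it:A4}.
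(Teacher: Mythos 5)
Your proof is correct, but it takes a different route from the paper's. The paper proves the inclusion $\dom(h^{*}(p,\cdot))\subseteq\dom(g^{*}(p,\cdot))$ by invoking the duality identity of \cref{equiv:dualpr}, which together with \ref{it:A2} gives $h^{*}(p,\bar Y)-g^{*}(p,\bar Y)\geq f_{\inf}>-\infty$ for every $\bar Y\in T_p\mathcal M$, and then argues by contradiction: if some $\bar Y$ had $h^{*}(p,\bar Y)<+\infty$ but $g^{*}(p,\bar Y)=+\infty$, the conventions \eqref{eq:conventions} would force the difference to be $-\infty$. You instead bypass \cref{equiv:dualpr} entirely and derive the explicit quantitative bound $g^{*}(p,X)\leq h^{*}(p,X)-f_{\inf}$ directly from the definition of the conjugate, splitting the supremum over $q\in\dom(g)$ (where $h(q)-g(q)=-f(q)\leq -f_{\inf}$, using $\dom(g)\subseteq\dom(h)$ from \cref{rmk:1}) and $q\notin\dom(g)$ (where the term is $-\infty$). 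Your argument is more self-contained and elementary — it is essentially a direct proof of the one inequality from \cref{equiv:dualpr} that the lemma actually needs, and it avoids both the contradiction and the delicate handling of $(+\infty)-(+\infty)$ — while the paper's version is shorter given that \cref{equiv:dualpr} is already available. The remaining steps (applying \cref{th:eqsubdifcf} to place $Y$ in $\dom(h^{*}(p,\cdot))$ and then invoking \ref{it:A4}) coincide with the paper's.
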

\begin{proof}
    Assume that $p\in \dom (h)$ and take $Y\in\partial h(p)$.
    Thus, by using \cref{th:eqsubdifcf} we obtain
    \begin{equation}\label{eq:aaa1}
    h^{*}(p,Y)=-h(p)<+\infty .
    \end{equation}
    From \cref{equiv:dualpr} and assumption~\ref{it:A2} we have that
    \begin{equation}\label{eq:aaa2}
        h^{*}(p,Y)-g^{*}(p,Y)
        \geq \inf_{(q,X) \in T\mathcal M}
            \bigl\{ h^{*}(q,X) -g^{*} (q,X)\bigr\}
        = \inf_{q\in \mathcal M}\bigl\{ g(q) - h(q) \bigr\}>-\infty.
    \end{equation}
    To prove the first statement, assume by contradiction that
    $\dom (h^{*}(p,\cdot))\nsubseteq \dom (g^{*}(p,\cdot))$.
    Thus, there exists ${\bar Y} \in T_p\mathcal M$ such that
    $h^{*}(p, {\bar Y})<+\infty$ and $g^{*}(p, {\bar Y})=+\infty$.
    By using \eqref{eq:conventions}, we have
    $h^{*}(p,{\bar Y})-g^{*}(p,{\bar Y})=h^{*}(p,{\bar Y})-(+\infty)=-\infty$,
    which contradicts the equality in \eqref{eq:aaa2} and the first statement is proved.
    Since $\dom (h^{*}(p,\cdot))\subseteq \dom (g^{*}(p,\cdot))$,
    it follows from~\eqref{eq:aaa1} that $g^{*}(p,Y)<+\infty$.
    Thus, $Y\in \dom (g^{*}(p,\cdot))$ and by assumption~\ref{it:A4}
    we conclude that $\partial_2 g^{*}(p,Y)\neq \varnothing$.
\end{proof}
\begin{proposition}
\cref{Alg:DCA1} is well defined.
\end{proposition}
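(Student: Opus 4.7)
The plan is to verify by induction on $k$ that each iteration of \cref{Alg:DCA1} produces well-defined quantities, which amounts to showing that the iterate $p^{(k)}$ stays in $\dom(g)$, that $\partial h(p^{(k)})$ and $\partial_{2}g^{*}(p^{(k)},X^{(k)})$ are non-empty so the selections in \eqref{eq:DCAS_natural} can be made, and that the next iterate $p^{(k+1)}$ lies again in $\dom(g)$. The base case is immediate from the initialization $p^{(0)}\in\dom(g)$.

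For the inductive step, assume $p^{(k)}\in\dom(g)$. Assumption \ref{it:A3} gives $p^{(k)}\in\intdom(h)$, and since $h$ is proper, lsc, and (strongly) convex by \ref{it:A1}, its subdifferential is non-empty at interior points of its domain, so one can select $X^{(k)}\in\partial h(p^{(k)})$. Because $p^{(k)}\in\dom(h)$ and $X^{(k)}\in\partial h(p^{(k)})$, \cref{le:wedda} applies and yields both $X^{(k)}\in\dom(g^{*}(p^{(k)},\cdot))$ and $\partial_{2}g^{*}(p^{(k)},X^{(k)})\neq\varnothing$. Hence $Y^{(k)}$ in \eqref{eq:DCAS_natural} can be chosen, and $p^{(k+1)}\coloneqq\exp_{p^{(k)}}Y^{(k)}$ is then unambiguously defined.

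The crux of the induction, and the step I expect to be the main obstacle, is verifying that $p^{(k+1)}\in\dom(g)$, which closes the loop. Applying \cref{th:FYINPC} to $g$ with the pair $(p^{(k)},X^{(k)})$ and $Y^{(k)}\in\partial_{2}g^{*}(p^{(k)},X^{(k)})$ produces the identity
\begin{equation*}
    g(p^{(k+1)}) + g^{*}(p^{(k)},X^{(k)}) = \langle X^{(k)},Y^{(k)}\rangle.
\end{equation*}
The right-hand side is a finite real number. On the left, $g^{*}(p^{(k)},X^{(k)})<+\infty$ is exactly the conclusion of \cref{le:wedda}, while $g^{*}(p^{(k)},X^{(k)})>-\infty$ follows from the Fenchel-Young inequality (\cref{th:FYIN}) applied at any $q_{0}\in\dom(g)$, which exists because $g$ is proper. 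Consequently $g(p^{(k+1)})$ is finite, i.e., $p^{(k+1)}\in\dom(g)$, and the induction is complete. Every remaining ingredient is a direct consequence of assumptions \ref{it:A1}--\ref{it:A4} together with \cref{le:wedda} and \cref{th:FYINPC}.
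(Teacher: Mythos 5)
Your proof is correct and follows essentially the same route as the paper's: use \ref{it:A3} to get a subgradient $X^{(k)}\in\partial h(p^{(k)})$, invoke \cref{le:wedda} for the non-emptiness of $\partial_2 g^{*}(p^{(k)},X^{(k)})$, and close the induction via the identity from \cref{th:FYINPC} to conclude $p^{(k+1)}\in\dom(g)$. Your explicit verification that $g^{*}(p^{(k)},X^{(k)})>-\infty$ via Fenchel--Young is a small point the paper leaves implicit, but otherwise the arguments coincide.
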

\begin{proof}
    Assume $p^{(k)}\in \dom (g)$.
    From \cref{rmk:1}, we have that $\dom (f)=\dom (g)\subseteq \dom (h)$,
    and hence $p^{(k)}\in \dom (h)$.
    By assumption~\ref{it:A3}, we have that $\partial h(p^{(k)})\neq \varnothing$.
    Let $X^{(k)}\in \partial h(p^{(k)})$.
    Since $h$ is convex, \cref{th:eqsubdifcf} implies that
    $h^{*}(p^{(k)},X^{(k)})=-h(p^{(k)})<+\infty$.
    By the first part of \cref{le:wedda}, we have that $g^{*}(p^{(k)},X^{(k)})<+\infty$ and
    $\partial_ 2g^{*}(p^{(k)},X^{(k)})\neq \varnothing$.
    Let $Y^{(k)}\in \partial_2 g^{*}(p^{(k)},X^{(k)})$.
    Since $\mathcal M$ is Hadamard, the point $p^{(k+1)}=\exp_{p^{(k)}}Y^{(k)}$ is well defined
    and belongs to $\mathcal M$.
    Moreover, applying \cref{th:FYINPC} with $f=g$, $p=p^{(k)}$, $X=X^{(k)}$ and $Y=Y^{(k)}$ we have
    $g(p^{(k+1)})+g^{*}(p^{(k)},X^{(k)})=\langle X^{(k)},Y^{(k)} \rangle$ or equivalently
    $g(p^{(k+1)})=\langle X^{(k)},Y^{(k)} \rangle -g^{*}(p^{(k)},X^{(k)})<+\infty$,
    which implies that $p^{(k+1)}\in \dom (g)=\dom (f) \subseteq \dom (h)$.
    Therefore, \cref{Alg:DCA1} is well defined.
\end{proof}

In the following, we present a second version of the DCA that is equivalent to \cref{Alg:DCA1}, which is described in \cref{Alg:DCA2}.

\begin{algorithm}[hbp]
    \caption{The DC Algorithm on Hadamard Manifolds (DCA2)}\label{Alg:DCA2}
    \begin{algorithmic}[1]
        \STATE {Choose an initial point $p^{(0)}\in \dom (g) $. Set $k=0$.}
        \STATE{Take $X^{(k)}\in\partial h(p^{(k)})$,
            and the next iterated $p^{(k+1)}$ is define as following
            \begin{equation} \label{eq:DCAS}
                p^{(k+1)}\in \argmin_{p\in \mathcal M}
                    \biggl(
                        g(p)-\big\langle X^{(k)}, \exp^{-1}_{p^{(k)}}p\big\rangle
                    \biggr).
            \end{equation}
        }
        \STATE{If $p^{(k+1)} =p^{(k)}$, then STOP and return $p^{(k)}$. Otherwise, go to Step~4.}
        \STATE{Set $k \leftarrow k+1$ and go to Step~2.}
    \end{algorithmic}
\end{algorithm}
It should be noted that the stopping criterion in step 3 of \cref{Alg:DCA2} allows it to generate an infinite sequence. Therefore, in practice, to implement \cref{Alg:DCA2}, an appropriate stopping criterion will be required, which will be addressed further in the implementation section.  Let us now analyze \cref{Alg:DCA2}. First of all, note that due to the point $ p^{(k+1)}$ be a solution of \eqref{eq:DCAS}, we have
\begin{equation}\label{eq:sol}
    g(p) - \big\langle X^{(k)}, \exp^{-1}_{p^{(k)}}p \big\rangle
    \geq
    g(p^{(k+1)})-\big\langle X^{(k)}, \exp^{-1}_{p^{(k)}}{p^{(k+1)}} \big\rangle,
    \qquad \text{for all } p\in \mathcal M.
\end{equation}
This inequality will now have an important role in the paper.
\begin{proposition}
    \cref{Alg:DCA2} is well defined.
\end{proposition}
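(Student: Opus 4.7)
The plan is to mirror the well-definedness argument given for \cref{Alg:DCA1}, but using \cref{L:coercrf} in place of \cref{th:FYINPC} to handle the existence of the minimizer in \eqref{eq:DCAS}. I would proceed by induction on $k$, showing that $p^{(k)}\in\dom(g)$ is preserved so that the iteration can be repeated indefinitely.

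For the base step, the initialization gives $p^{(0)}\in\dom(g)$. For the inductive step, suppose $p^{(k)}\in\dom(g)$. First I would verify that $X^{(k)}\in\partial h(p^{(k)})$ can actually be chosen: by \ref{it:A3} we have $\dom(g)\subseteq \intdom(h)$, so $p^{(k)}\in\intdom(h)$, and since $h$ is proper, lsc and convex (from \ref{it:A1}), standard convex analysis on Hadamard manifolds guarantees $\partial h(p^{(k)})\neq\varnothing$. Pick some $X^{(k)}$ in this set.

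Next I would show the argmin in \eqref{eq:DCAS} is attained. Observe that the objective
\begin{equation*}
    \mathcal M\ni p\ \mapsto\ g(p)-\bigl\langle X^{(k)}, \exp^{-1}_{p^{(k)}}p\bigr\rangle
\end{equation*}
is exactly the type of function treated in \cref{L:coercrf}, with the $\sigma$-strongly convex function $g$ (from \ref{it:A1}) playing the role of $g$ there, $\bar p=p^{(k)}$, and $X=X^{(k)}\in T_{p^{(k)}}\mathcal M$. Hence this function is 1-coercive and its global minimizer set is non-empty, allowing the selection of $p^{(k+1)}$.

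Finally I would confirm that $p^{(k+1)}\in\dom(g)$, so that the induction may continue. Using \eqref{eq:sol} with the test point $p=p^{(k)}$, together with $\exp^{-1}_{p^{(k)}}p^{(k)}=0$, gives
\begin{equation*}
    g(p^{(k+1)})-\bigl\langle X^{(k)}, \exp^{-1}_{p^{(k)}}p^{(k+1)}\bigr\rangle \ \leq\ g(p^{(k)})\ <\ +\infty,
\end{equation*}
so $g(p^{(k+1)})<+\infty$ and therefore $p^{(k+1)}\in\dom(g)$. The mildly subtle step, and the one I would be most careful about, is the existence of $X^{(k)}\in\partial h(p^{(k)})$, since this relies on interior-of-domain subdifferentiability; the coercivity-based existence of the minimizer is immediate from \cref{L:coercrf}, and the closure under iteration follows from the minimality inequality \eqref{eq:sol} at the trivial comparison point $p=p^{(k)}$.
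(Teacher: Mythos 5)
Your proposal is correct and follows essentially the same route as the paper: assumption \ref{it:A3} gives $p^{(k)}\in\intdom(h)$ and hence $\partial h(p^{(k)})\neq\varnothing$, and \cref{L:coercrf} gives $1$-coercivity and thus existence of a minimizer of the subproblem. The only cosmetic difference is that you verify $p^{(k+1)}\in\dom(g)$ explicitly via \eqref{eq:sol} at $p=p^{(k)}$, whereas the paper absorbs that into the statement that the minimizer set is contained in $\dom(g)$; the substance is identical.
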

\begin{proof}
    Assume that $p^{(k)}\in \dom (g)$.
    From \cref{rmk:1}, we have that $\dom (g)=\dom (f)\subseteq \dom (h)$,
    which implies that $p^{(k)}\in \dom (h)$.
    Thus, by Assumption~\ref{it:A3}, we have that $\partial h(p^{(k)})\neq \varnothing$.
    Let $X^{(k)}\in \partial h(p^{(k)})$.
    From \cref{L:coercrf}, we have that $g_{k}\colon\mathcal M\to \overline{\mathbb{R}}$
    given by $g_{k}(p)\coloneqq g(p)-\langle X^{(k)},\exp^{-1}_{p^{(k)}}p\rangle$
    is 1-coercive.
    Consequently, its minimizer set is non-empty and is contained in $\dom (g)$.
    Therefore, there exists $p^{(k+1)}\in \dom (g)=\dom (f)$ such that
    $p^{(k+1)}\in \argmin_{p\in \mathcal M} (g(p)
    -\langle X^{(k)}, \exp^{-1}_{p^{(k)}}p \rangle)$,
    which implies that \cref{Alg:DCA2} is well defined.
\end{proof}
\begin{remark}
    If $\mathcal M=\mathbb{R}^{n}$, then by \cref{rmk:subdif},
    we have $\partial_2g^{*}(p^{(k)},X^{(k)} ) = \partial g^{*}(X^{(k)})-\{p^{(k)}\}$
    and consequently
    $Y^{(k)}+p^{(k)} = \exp_{p^{(k)}}Y^{(k)} = p^{(k+1)} \in \partial g^{*}(X^{(k)})
        = \partial_2g^{*}(p^{(k)},X^{(k)} )+\{p^{(k)}\}$, i.e.,
    $p^{(k+1)} \in \partial g^{*}(X^{(k)})$ and $X^{(k)}\in \partial h(p^{(k)})$.
    Therefore, \cref{Alg:DCA1} coincides with the classical formulation of the DCA;
    see~\cite{TaoSouad1988,AnTao2005}.
    Moreover, if $\mathcal M=\mathbb{R}^{n}$, then \eqref{eq:sol} becomes
    \begin{equation*}
        g(p) - \big\langle X^{(k)}, p-p^{(k)} \big\rangle
        \geq g(p^{(k+1)})-\big\langle X^{(k)}, p^{(k+1)}-p^{(k)} \big\rangle,
        \qquad \text{ for all } p\in \mathbb{R}^{n},
    \end{equation*}
which is equivalent to $p^{(k+1)}=\argmin_{p \in \mathbb{R}^{n}} \{ g(p)-\big\langle X^{(k)}, p-p^{(k)} \big\rangle\}$
As a conclusion, \cref{Alg:DCA2} yields an alternative version of the classical DCA
\end{remark}
In the next result, we show that \cref{Alg:DCA1} is equivalent to \cref{Alg:DCA2} in the Riemannian setting, similar to the linear setting.
\begin{proposition}
    \label{equivalenceAlg}
    If $p^{(k)}\in \dom (g)$, $X^{(k)}\in \partial h(p^{(k)})$ and
    $Y^{(k)}\in \partial_2g^{*}(p^{(k)},X^{(k)})$,
    then $p^{(k+1)}=\exp_{p^{(k)}}Y^{(k)}$ if and only if
    $p^{(k+1)}\in \argmin_{p\in \mathcal M}
        (g(p)-\langle X^{(k)}, \exp^{-1}_{p^{(k)}}p\rangle)$.
    Consequently, \cref{Alg:DCA1} is equivalent to \cref{Alg:DCA2}.
\end{proposition}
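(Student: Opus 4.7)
The plan is to derive both directions of the equivalence from a single characterization: by Definition~\ref{def:conj_nova},
\begin{equation*}
-g^{*}(p^{(k)}, X^{(k)}) = \inf_{p\in \mathcal M}\bigl\{g(p) - \langle X^{(k)}, \exp^{-1}_{p^{(k)}} p\rangle\bigr\},
\end{equation*}
so a point $p\in\mathcal M$ minimizes the objective of~\eqref{eq:DCAS} if and only if the Fenchel--Young inequality from Theorem~\ref{th:FYIN} becomes an equality at $p$, namely $g(p) + g^{*}(p^{(k)}, X^{(k)}) = \langle X^{(k)}, \exp^{-1}_{p^{(k)}} p\rangle$. Observe that this rewriting turns the minimization appearing in \cref{Alg:DCA2} into a question about when the Fenchel--Young bound is tight, which is exactly the condition characterized by Theorem~\ref{th:FYINPC}.

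For the forward implication, I would take the hypothesized $Y^{(k)} \in \partial_2 g^{*}(p^{(k)}, X^{(k)})$ and apply Theorem~\ref{th:FYINPC} with $f = g$, $p = p^{(k)}$, $X = X^{(k)}$, $Y = Y^{(k)}$; this produces the Fenchel--Young equality at the point $\exp_{p^{(k)}} Y^{(k)}$, so that $p^{(k+1)} = \exp_{p^{(k)}} Y^{(k)}$ is a minimizer of~\eqref{eq:DCAS} by the observation above. For the reverse implication, starting from a minimizer $p^{(k+1)}$ I would reverse the same observation to obtain Fenchel--Young equality at $p^{(k+1)}$, and then Theorem~\ref{th:FYINPC} places $\tilde Y \coloneqq \exp^{-1}_{p^{(k)}} p^{(k+1)}$ in $\partial_2 g^{*}(p^{(k)}, X^{(k)})$, whence $p^{(k+1)} = \exp_{p^{(k)}}\tilde Y$. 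To identify $\tilde Y$ with the prescribed $Y^{(k)}$, I would invoke the strong convexity of $g$ from Assumption~\ref{it:A1}, which forces $\partial_2 g^{*}(p^{(k)}, X^{(k)})$ to be a singleton, so $\tilde Y = Y^{(k)}$ and hence $p^{(k+1)} = \exp_{p^{(k)}} Y^{(k)}$ as required.

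The main step is recognizing the built-in duality between Fenchel--Young equality and attainment of the infimum in the definition of $g^{*}$; after that, both directions reduce to a single invocation of Theorem~\ref{th:FYINPC}, and no computation is needed beyond unpacking definitions. The only delicate point is the uniqueness argument in the reverse direction, which is where the strong convexity of $g$ is essential (and which is consistent with the uniqueness of the minimizer already implicit in the well-definedness argument based on \cref{L:coercrf}). The equivalence of \cref{Alg:DCA1} and \cref{Alg:DCA2} then follows at once, since the two algorithms produce the same iterate $p^{(k+1)}$ at every step.
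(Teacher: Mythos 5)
Your main line of argument is exactly the paper's: both proofs rest on the identity $-g^{*}(p^{(k)},X^{(k)})=\inf_{p\in\mathcal M}\bigl(g(p)-\langle X^{(k)},\exp^{-1}_{p^{(k)}}p\rangle\bigr)$ coming from \cref{def:conj_nova}, combined with \cref{th:FYINPC}, so that membership of $\exp^{-1}_{p^{(k)}}p^{(k+1)}$ in $\partial_2 g^{*}(p^{(k)},X^{(k)})$ is equivalent to the Fenchel--Young bound being tight at $p^{(k+1)}$, which is equivalent to $p^{(k+1)}$ attaining the infimum in \eqref{eq:DCAS}. That chain of equivalences is precisely the paper's proof and it settles both directions simultaneously.

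The one place you deviate is the last step of the reverse direction, where you identify $\tilde Y=\exp^{-1}_{p^{(k)}}p^{(k+1)}$ with the prescribed $Y^{(k)}$ by asserting that the $\sigma$-strong convexity of $g$ in \ref{it:A1} forces $\partial_2 g^{*}(p^{(k)},X^{(k)})$ to be a singleton. You are right that the literal wording of the proposition, with a fixed $Y^{(k)}$, invites this question, but your resolution is not justified: on a Hadamard manifold the perturbation $p\mapsto\langle X^{(k)},\exp^{-1}_{p^{(k)}}p\rangle$ is not geodesically affine, the subproblem \eqref{eq:DCAS} is in general not convex (the paper says so explicitly before \cref{ex:loglog}), and \cref{L:coercrf} yields only existence, not uniqueness, of minimizers --- so the Euclidean fact ``$g$ strongly convex $\Rightarrow g^{*}$ differentiable'' does not transfer automatically, and no such singleton property is established anywhere in the paper. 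The step is also unnecessary: the equivalence is meant, and is proved by your own chain, at the level of sets, namely $\exp_{p^{(k)}}\bigl(\partial_2 g^{*}(p^{(k)},X^{(k)})\bigr)=\argmin_{p\in\mathcal M}\bigl(g(p)-\langle X^{(k)},\exp^{-1}_{p^{(k)}}p\rangle\bigr)$, which is exactly what is needed to conclude that an arbitrary choice of $Y^{(k)}$ in \cref{Alg:DCA1} corresponds to an arbitrary choice of minimizer in \cref{Alg:DCA2}. Dropping the singleton claim leaves a complete and correct proof.
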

\begin{proof}
    Let $p^{(k)}\in \dom(g)$, $X^{(k)}\in \partial h(p^{(k)})$,
    $Y^{(k)}\in \partial_2g^{*}(p^{(k)},X^{(k)})$, and $p^{(k+1)}=\exp_{p^{(k)}}Y^{(k)}$ be given
    by \cref{Alg:DCA1}.
    By applying \cref{th:FYINPC} with $f=g$, $p=p^{(k)}$, $Y=\exp^{-1}_{p^{(k)}}p^{(k+1)}$,
    and $X=X^{(k)}$, we have
    $g(p^{(k+1)})+g^{*}(p^{(k)},X^{(k)})=\langle X^{(k)}, \exp^{-1}_{p^{(k)}}p^{(k+1)} \rangle$,
    which by using \cref{def:conj_nova} is equivalent to
    \begin{equation*}
        g(p^{(k+1)})-\langle X^{(k)},\exp^{-1}_{p^{(k)}}p^{(k+1)} \rangle
        = -g^{*}(p^{(k)},X^{(k)}) = -\sup_{q\in \mathcal M} \left( \langle X^{(k)},\exp^{-1}_{p^{(k)}}q \rangle - g(q)\right),
    \end{equation*}
    or equivalently,
    \begin{equation*}
        g(p^{(k+1)})-\langle X^{(k)},\exp^{-1}_{p^{(k)}}p^{(k+1)} \rangle
        = \displaystyle\inf_{q\in\mathcal M}
            (g(q)-\langle X^{(k)}, \exp^{-1}_{p^{(k)}}q \rangle).
    \end{equation*}
    This is also equivalent to
    $p^{(k+1)} \in \displaystyle\argmin_{p\in \mathcal M }
        (g(p)-\langle X^{(k)}, \exp^{-1}_{p^{(k)}}p\rangle)$.
    Therefore, \cref{Alg:DCA1} is equivalent to \cref{Alg:DCA2}.
\end{proof}
\section{Convergence analysis of DCA}\label{sec:Convergence}
The aim of this section is to study the convergence properties of DCA.
It is worth mentioning that the results in this section can be proved using either of
the formulations of DCA in \cref{Alg:DCA1} and~\ref{Alg:DCA2}, as they are equivalent
according to \cref{equivalenceAlg}.
For simplicity, we present the results only using \cref{Alg:DCA2}, but the proofs of
the results for \cref{Alg:DCA1} are quite similar.
We begin by showing a descent property of the algorithm.
\begin{proposition}\label{pr:ffr}
    Let $(p^{(k)})_{k\in \mathbb N}$ be generated by \cref{Alg:DCA2}.
    Then, the following inequality holds
    \begin{equation}\label{eq:dsc}
        f(p^{(k+1)})\leq f(p^{(k)})-\frac{\sigma}{2}d^{2}(p^{(k)},p^{(k+1)}).
    \end{equation}
    Moreover, if $p^{(k+1)} =p^{(k)}$, then $p^{(k)}$ is a critical point of $f$.
\end{proposition}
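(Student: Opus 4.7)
The plan is to combine the optimality condition of the subproblem solved in \cref{Alg:DCA2}, encoded in the inequality \eqref{eq:sol}, with the $\sigma$-strong convexity of the second DC component $h$ at $p^{(k)}$ using the subgradient $X^{(k)} \in \partial h(p^{(k)})$. This produces two inequalities that, when added together, telescope the subgradient terms and yield the claimed descent.

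First I would apply \eqref{eq:sol} with the test point $p = p^{(k)}$. Since $\exp^{-1}_{p^{(k)}} p^{(k)} = 0$, this collapses to
\begin{equation*}
    g(p^{(k+1)}) - \big\langle X^{(k)}, \exp^{-1}_{p^{(k)}} p^{(k+1)} \big\rangle \leq g(p^{(k)}).
\end{equation*}
Next, since $h$ is $\sigma$-strongly convex and $X^{(k)} \in \partial h(p^{(k)})$, \cref{thm:f-convex-subdiff}\cref{thm:f-convex-subdiffii} applied at $q = p^{(k)}$ and $p = p^{(k+1)}$ gives
\begin{equation*}
    h(p^{(k+1)}) \geq h(p^{(k)}) + \big\langle X^{(k)}, \exp^{-1}_{p^{(k)}} p^{(k+1)} \big\rangle + \frac{\sigma}{2} d^2(p^{(k+1)}, p^{(k)}).
\end{equation*}
Subtracting the second inequality from the first cancels the $\langle X^{(k)}, \exp^{-1}_{p^{(k)}} p^{(k+1)}\rangle$ terms and yields exactly \eqref{eq:dsc}.

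For the second assertion, suppose $p^{(k+1)} = p^{(k)}$. Substituting this into \eqref{eq:sol} and again using $\exp^{-1}_{p^{(k)}} p^{(k)} = 0$ gives
\begin{equation*}
    g(p) \geq g(p^{(k)}) + \big\langle X^{(k)}, \exp^{-1}_{p^{(k)}} p \big\rangle, \qquad \text{for all } p \in \mathcal{M},
\end{equation*}
which, by \cref{thm:f-convex-subdiff}\cref{thm:f-convex-subdiffi}, means $X^{(k)} \in \partial g(p^{(k)})$. Combined with $X^{(k)} \in \partial h(p^{(k)})$, we obtain $\partial g(p^{(k)}) \cap \partial h(p^{(k)}) \neq \varnothing$, so $p^{(k)}$ is a critical point of $f$ by \cref{def:critpoint}.

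There is no real obstacle here; the only subtle point is being careful to use the strong convexity of $h$ (rather than $g$) to produce the $\frac{\sigma}{2}$ factor, since it is $X^{(k)}$'s membership in $\partial h(p^{(k)})$ that gives direct access to the subgradient inequality, whereas the information about $g$ only comes through the minimization defining $p^{(k+1)}$.
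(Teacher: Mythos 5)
Your proof is correct and follows essentially the same route as the paper: evaluating \eqref{eq:sol} at $p=p^{(k)}$, invoking the $\sigma$-strong convexity of $h$ via the subgradient inequality for $X^{(k)}\in\partial h(p^{(k)})$, and subtracting to cancel the inner-product terms, with the criticality claim obtained exactly as in the paper from \eqref{eq:sol} and \cref{thm:f-convex-subdiff}. No gaps.
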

\begin{proof}
    By using inequality in~\eqref{eq:sol} with $p=p^{(k)}$ we have
    $g(p^{(k)})-g(p^{(k+1)}) \geq \langle -X^{(k)}, \exp^{-1}_{p^{(k)}}{p^{(k+1)}}\rangle$.
    On the other hand, since $h$ is $\sigma$-strongly convex and
    $X^{(k)}\in \partial h(p^{(k)})$, we obtain that
    \begin{equation*}
        h(p^{(k+1)})-h({p^{(k)}})
            \geq \langle X^{(k)},\exp^{-1}_{p^{(k)}}p^{(k+1)}\rangle
                +\frac{\sigma}{2}d^2(p^{(k+1)},p^{(k)}).
    \end{equation*}
    Hence, using that $f=g-h$ together with two previous inequalities we
    obtain~\eqref{eq:dsc}. To prove the last statement, we assume that $p^{(k+1)} =p^{(k)}$.
    Thus, \eqref{eq:sol} implies that
    $g(p)\geq g(p^{(k)})+ \langle X^{(k)}, \exp^{-1}_{p^{(k)}}p \rangle$,
    for all $p\in \mathcal M$, which shows that $X^{(k)}\in \partial g(p^{(k)})$.
    Hence, taking into account that $X^{(k)}\in\partial h(p^{(k)})$, we conclude that
    $X^{(k)}\in \partial g(p^{(k)}) \cap \partial h(p^{(k)}) \neq~\varnothing$.
    Therefore, it follows from \cref{def:critpoint} that $p^{(k)}$ is a critical point
    of $f$ in problem~\eqref{Pr:DCproblem}.
\end{proof}
\begin{proposition}\label{pr:consit}
    Let $(p^{(k)})_{k\in \mathbb N}$ be generated by \cref{Alg:DCA2}.
    Then,
    \begin{equation*}
        \sum_{k=0}^{+\infty}d^{2}(p^{(k)},p^{(k+1)})<+\infty.
    \end{equation*}
    In particular, $\displaystyle\lim_{k\to+\infty} d (p^{(k)},p^{(k+1)})=0$.
\end{proposition}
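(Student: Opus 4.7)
The plan is to exploit the descent inequality~\eqref{eq:dsc} from \cref{pr:ffr} in a telescoping fashion, combined with the lower boundedness assumption~\ref{it:A2}.

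First, I would rewrite \eqref{eq:dsc} as
\begin{equation*}
    \frac{\sigma}{2}d^{2}(p^{(k)},p^{(k+1)}) \leq f(p^{(k)}) - f(p^{(k+1)}),
\end{equation*}
which holds for every $k \in \mathbb{N}$. Summing both sides from $k=0$ to $k=N$ produces a telescoping sum on the right, giving
\begin{equation*}
    \frac{\sigma}{2}\sum_{k=0}^{N} d^{2}(p^{(k)},p^{(k+1)}) \leq f(p^{(0)}) - f(p^{(N+1)}).
\end{equation*}

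Next, I would invoke assumption~\ref{it:A2}, which guarantees $f(p^{(N+1)}) \geq f_{\inf} > -\infty$. Substituting this lower bound yields
\begin{equation*}
    \frac{\sigma}{2}\sum_{k=0}^{N} d^{2}(p^{(k)},p^{(k+1)}) \leq f(p^{(0)}) - f_{\inf}.
\end{equation*}
Since the right-hand side is a finite constant independent of $N$, and since $\sigma > 0$, letting $N \to +\infty$ yields the desired summability. In particular, since the partial sums of a convergent series of nonnegative terms form a bounded monotone sequence, the general term $d^{2}(p^{(k)},p^{(k+1)})$ must tend to zero, and hence $d(p^{(k)},p^{(k+1)}) \to 0$ as $k \to +\infty$.

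I do not anticipate any serious obstacle here: the proof is a standard telescoping argument and the required descent inequality is already established in \cref{pr:ffr}. The only subtlety worth noting is that we must use \ref{it:A2} to bound $f(p^{(N+1)})$ from below uniformly in $N$; without this assumption the telescoping sum need not be controlled.
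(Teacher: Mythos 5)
Your proof is correct and follows essentially the same route as the paper: both telescope the descent inequality~\eqref{eq:dsc} over $k=0,\dots,N$, bound $f(p^{(N+1)})$ from below by $f_{\inf}$ via assumption~\ref{it:A2}, and let $N\to+\infty$. No gaps; the argument matches the paper's proof.
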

\begin{proof}
    It follows from~\eqref{eq:dsc} that
    $0\leq (\sigma/2)d^{2}(p^{(k)},p^{(k+1)})\leq f (p^{(k)})-f (p^{(k+1)})$,
    for all $k\in \mathbb{N}$. Thus,
    \begin{align*}
        \sum_{k=0}^Td^{2}(p^{(k)},p^{(k+1)})
        \leq \frac{2}{\sigma}\sum_{k=0}^T\left( f (p^{(k)})-f (p^{(k+1)}) \right) \leq \frac{2}{\sigma}\left( f (p^{(0)})-f _{\inf} \right),\\
    \end{align*}
    for each $T\in \mathbb N$, where $f_{\inf}>-\infty$ is given by assumption~\ref{it:A2}. Taking the limit in the last inequality, as $T$ goes to $+\infty$, we obtain
    the first statement. The second statement is an immediate consequence of the first one.
\end{proof}
\begin{theorem}\label{pr:assympc}
    Let $(p^{(k)})_{k\in \mathbb N}$ and $(X^{(k)})_{k\in \mathbb N}$ be generated
    by \cref{Alg:DCA2}. If ${\bar p}$ is a cluster point of $(p^{(k)})_{k\in \mathbb{N}}$,
    then ${\bar p}\in \dom(g)$ and there exists a cluster point ${\bar X}$ of
    $(X^{(k)})_{k\in \mathbb N}$ such that
    ${\bar X}\in \partial g({\bar p})\cap \partial h({\bar p})$.
    Consequently, every cluster point of $(p^{(k)})_{k\in \mathbb{N}}$,
    if any, is a critical point of $f$.
\end{theorem}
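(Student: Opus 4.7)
The plan is to extract a convergent subsequence $p^{(k_j)}\to\bar p$ and identify a cluster point $\bar X$ of the corresponding sequence of subgradients $(X^{(k_j)})$ that lies simultaneously in $\partial h(\bar p)$ and $\partial g(\bar p)$; criticality of $\bar p$ will then follow directly from \cref{def:critpoint}. The first ingredient is free: \cref{pr:consit} gives $d(p^{(k)},p^{(k+1)})\to 0$, so $p^{(k_j+1)}\to\bar p$ as well.

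To secure $\bar p\in\dom(g)$ I would use the descent inequality \eqref{eq:dsc} together with~\ref{it:A2}: $(f(p^{(k)}))$ is monotone decreasing and bounded below, hence bounded. Writing $g(p^{(k_j)})=f(p^{(k_j)})+h(p^{(k_j)})$, the local boundedness of the lsc convex function $h$ near $\bar p$ (the iterates live in $\intdom(h)$ by~\ref{it:A3}) controls $g(p^{(k_j)})$ from above, so the lsc of $g$ yields $g(\bar p)\leq \liminf_{j}g(p^{(k_j)})<+\infty$. Consequently $\bar p\in\dom(g)\subseteq\intdom(h)$, and \cref{cont_subdif} applied to $h$ tells us that $(X^{(k_j)})$ is bounded and all its cluster points belong to $\partial h(\bar p)$. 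I would then extract a further subsequence $X^{(k_{j_i})}\to\bar X\in\partial h(\bar p)$.

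The heart of the argument is showing $\bar X\in\partial g(\bar p)$. From the optimality relation \eqref{eq:sol} applied at $k=k_{j_i}$, for every $p\in\mathcal M$,
\begin{equation*}
g(p^{(k_{j_i}+1)})-\bigl\langle X^{(k_{j_i})},\exp^{-1}_{p^{(k_{j_i})}}p^{(k_{j_i}+1)}\bigr\rangle
\leq g(p)-\bigl\langle X^{(k_{j_i})},\exp^{-1}_{p^{(k_{j_i})}}p\bigr\rangle.
\end{equation*}
Passing to the limit as $i\to\infty$, \cref{pr:cont_pi}\cref{pr:cont_pi_i} and \cref{pr:cont_pi}\cref{pr:cont_pi_iii} give $\langle X^{(k_{j_i})},\exp^{-1}_{p^{(k_{j_i})}}p\rangle\to\langle\bar X,\exp^{-1}_{\bar p}p\rangle$; boundedness of $(X^{(k_{j_i})})$ together with $\|\exp^{-1}_{p^{(k_{j_i})}}p^{(k_{j_i}+1)}\|=d(p^{(k_{j_i})},p^{(k_{j_i}+1)})\to 0$ forces the inner product on the left to zero; and lsc of $g$ yields $g(\bar p)\leq\liminf_{i}g(p^{(k_{j_i}+1)})$. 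Assembling these produces $g(p)\geq g(\bar p)+\langle\bar X,\exp^{-1}_{\bar p}p\rangle$ for all $p\in\mathcal M$, which by \cref{thm:f-convex-subdiff}\cref{thm:f-convex-subdiffi} is exactly $\bar X\in\partial g(\bar p)$. Combined with $\bar X\in\partial h(\bar p)$ this delivers $\partial g(\bar p)\cap\partial h(\bar p)\neq\varnothing$, so $\bar p$ is a critical point of $f$.

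The main obstacle is precisely the step $\bar p\in\dom(g)$: one must avoid circularly invoking continuity of $h$ at $\bar p$ before knowing $\bar p\in\intdom(h)$. The resolution combines the bounded descent of $f$ with the local boundedness of lsc convex functions at interior points of their domain, using that the tails of $(p^{(k_j)})$ already sit in the open set $\intdom(h)$; everything else in the proof is a mechanical limit passage through \cref{pr:cont_pi}, \cref{cont_subdif}, and the lsc of $g$.
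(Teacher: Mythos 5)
Your overall architecture coincides with the paper's: establish $\bar p\in\dom(g)$, deduce $\bar p\in\intdom(h)$ from \ref{it:A3} and extract a cluster point $\bar X\in\partial h(\bar p)$ via \cref{cont_subdif}, then pass to the limit in \eqref{eq:sol} using \cref{pr:cont_pi}, \cref{pr:consit} and the lower semicontinuity of $g$ to obtain $\bar X\in\partial g(\bar p)$. That final limit passage is carried out exactly as in the paper and is correct.

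The gap is the step you yourself flag: $\bar p\in\dom(g)$. You propose to bound $g(p^{(k_j)})=f(p^{(k_j)})+h(p^{(k_j)})$ from above by invoking local boundedness of the convex function $h$ ``near $\bar p$'', justified by the observation that the iterates lie in the open set $\intdom(h)$. This is circular, and the proposed repair does not close the circle: local boundedness of a convex function is available at interior points of its domain, whereas you need a bound that is uniform along the tail of the sequence, which requires $\bar p\in\intdom(h)$ --- precisely the conclusion you are trying to reach (via $\bar p\in\dom(g)\subseteq\intdom(h)$). Membership of each individual $p^{(k_j)}$ in $\intdom(h)$ only yields bounds on neighbourhoods whose size and magnitude degenerate as $j\to\infty$ when $\bar p$ lies on the boundary of $\dom(h)$; a lsc convex function can indeed be unbounded above along a sequence in the interior of its domain converging to a point of the domain's boundary (in $\mathbb{R}^2$, the perspective function $h(x,y)=x^2/y$ on $\{y>0\}$ with $h(0,0)=0$ satisfies $h(1/k,1/k^3)=k\to\infty$). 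The paper avoids any upper bound on $h(p^{(k)})$ altogether: from $f(p^{(0)})\geq g(p^{(k)})-h(p^{(k)})$ it passes to $f(p^{(0)})\geq \liminf_{k} g(p^{(k)})-\limsup_{k} h(p^{(k)})\geq g(\bar p)-\limsup_{k} h(p^{(k)})$ using only that $g$ is lsc, and then rules out $g(\bar p)=+\infty$ by the conventions \eqref{eq:conventions}, under which the right-hand side would be $+\infty$ regardless of the behaviour of $h(p^{(k)})$, contradicting $f(p^{(0)})<+\infty$. Replacing your boundedness argument by this one (or by any argument that does not presuppose $\bar p\in\intdom(h)$) closes the gap; everything after that point in your proposal is sound.
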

\begin{proof}
    Let ${\bar p}\in \mathcal M$ be a cluster point of $(p^{(k)})_{k\in \mathbb{N}}$.
    Without loss of generality we can assume that
    $\displaystyle\lim_{k \to +\infty}p^{(k)}={\bar p}$.
    It follows from \cref{pr:ffr} together with assumption \ref{it:A2} that
    $(f(p^{(k)}))_{k\in \mathbb N}$ is non-increasing and converges.
    Moreover, due to $f(p^{(0)})\geq f(p^{(k)})=g(p^{(k)})-h(p^{(k)})$ and $g$ be lsc, we have
    \begin{equation*}
        f(p^{(0)})\geq \liminf_{k\to +\infty} g(p^{(k)}) - \limsup _{k\to +\infty}h(p^{(k)})
        \geq g({\bar p})- \limsup _{k\to +\infty}h(p^{(k)}).
    \end{equation*}
    Thus, using the convention~\eqref{eq:conventions} we conclude that ${\bar p}\in \dom (g)$.
    Hence, using assumption \ref{it:A3}, we conclude that ${\bar p}\in \intdom(h)$.
    We know that $X^{(k)}\in\partial h(p^{(k)})$, for all $k\in \mathbb{N}$.
    Thus, by \cref{cont_subdif}, we can also conclude that
    $\displaystyle\lim_{k\to+\infty} X^{(k)}={\bar X}\in \partial h({\bar p})$.
    Due to the point $p^{(k+1)}$ being a solution of~\eqref{eq:DCAS},
    it satisfies \eqref{eq:sol}.
    Thus, taking the inferior limit in \eqref{eq:sol}, as $k$ goes to $+\infty$,
    and using the fact that $\displaystyle\lim_{k\to+\infty}p^{(k)}={\bar p}$,
    $g$ is lsc together with \cref{pr:cont_pi},\cref{pr:cont_pi_iii} and \cref{pr:consit},
    we obtain
    \begin{equation*}
        g(p) \geq \liminf_{k\to+\infty}
            \biggl(
                g(p^{(k+1)}) + \langle X^{(k)}, \exp^{-1}_{p^{(k)}}p \rangle
                           - \langle X^{(k)}, \exp^{-1}_{p^{(k)}}p^{(k+1)} \rangle
            \biggr)
            \geq g({\bar p})+\langle {\bar X}, \exp^{-1}_{{\bar p}}p\rangle,
    \end{equation*}
    for each $p\in \mathcal M$, which implies that
    $g(p) \geq g({\bar p})+\langle {\bar X}, \exp^{-1}_{{\bar p}}p\rangle$,
    for all $p\in \mathcal M$.
    Hence, ${\bar X}\in \partial g({\bar p})$.
    Therefore, ${\bar X}\in \partial g({\bar p})\cap \partial h({\bar p})$,
    and hence ${\bar p}$ is a critical point of $f$ in problem~\eqref{Pr:DCproblem}.
\end{proof}

\begin{proposition}
    Let $(p^{(k)})_{k\in \mathbb N}$ be generated by \cref{Alg:DCA2}.
    Then, for all $N\in \mathbb{N},$ there holds
    \begin{equation*}
        \min_{k=0,1,\ldots,N}   d(p^{(k)},p^{(k+1)})
        \leq \biggl( \frac{ 2(f (p^{0})-f _{\inf})}{(N+1)\sigma} \biggr)^{1/2}.
    \end{equation*}
\end{proposition}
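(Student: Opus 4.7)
The plan is to apply a standard telescoping-plus-minimum-vs-average argument to the descent inequality already proved in \cref{pr:ffr}. The descent inequality reads
\begin{equation*}
    \tfrac{\sigma}{2} d^{2}(p^{(k)},p^{(k+1)}) \leq f(p^{(k)}) - f(p^{(k+1)}),
\end{equation*}
so summing from $k = 0$ to $k = N$ makes the right-hand side telescope to $f(p^{(0)}) - f(p^{(N+1)})$, which by assumption \ref{it:A2} is bounded above by $f(p^{(0)}) - f_{\inf}$.

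Next, I would bound the minimum by the average: since $\min_{k=0,\ldots,N} d^{2}(p^{(k)},p^{(k+1)}) \leq \frac{1}{N+1}\sum_{k=0}^{N} d^{2}(p^{(k)},p^{(k+1)})$, combining with the previous inequality yields
\begin{equation*}
    \min_{k=0,\ldots,N} d^{2}(p^{(k)},p^{(k+1)}) \leq \frac{2\bigl(f(p^{(0)}) - f_{\inf}\bigr)}{(N+1)\sigma}.
\end{equation*}
Taking square roots (both sides are non-negative) delivers the claim. The only real step is verifying that the telescoping and minimum-vs-average bounds apply, which is immediate, so there is essentially no obstacle; this is a short corollary of the descent property combined with the lower bound \ref{it:A2}, exactly analogous to the Euclidean sublinear rate proof for DCA.
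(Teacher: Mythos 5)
Your proposal is correct and follows exactly the same route as the paper: apply the descent inequality \eqref{eq:dsc} termwise, telescope the sum over $k=0,\ldots,N$, bound it by $f(p^{(0)})-f_{\inf}$ via \ref{it:A2}, replace the minimum by the average, and take square roots. No gaps.
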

\begin{proof}
    It follows from~\eqref{eq:dsc} that
    $d^{2}(p^{(k)},p^{(k+1)})\leq (2/\sigma) \bigr( f (p^{(k)})-f (p^{(k+1)}) \bigr)$,
    for all $k\in \mathbb{N}$. Thus,
    \begin{equation*}
        (N+1)\min _{k=0,1,\ldots,N}\biggl( d^{2}(p^{(k)},p^{(k+1)}) \biggr)
        \leq \sum_{k=0}^{N}\frac{2}{\sigma} \biggl( f (p^{(k)})-f (p^{(k+1)}) \biggr)
        \leq \frac{2}{\sigma}\biggl( f (p^{0})-f _{\inf}\biggr),
    \end{equation*}
    where $f_{\inf}>-\infty$ is given by assumption~\ref{it:A2}.
    Therefore, the desired inequality directly follows.
\end{proof}

The last result of this section establishes a primal-dual asymptotic convergence of the
sequences generated by the DCA.
This result extends the known result from the Euclidean case,
cf.~\cite[Theorem 3]{TaoSouad1988}, to Hadamard manifolds.
Due to the nature of the problem, we will use the formulation of the DCA given
in \cref{Alg:DCA1}.

\begin{theorem}\label{th:pdac}
    Let $(p^{(k)})_{k\in \mathbb{N}}$ and $(X^{(k)})_{k\in \mathbb{N}}$ be
    the sequences generated by \cref{Alg:DCA1}. Then, the following statements hold:
    \begin{enumerate}
        \item
        \label{th:pdac_i}
            $g(p^{(k+1)})-h(p^{(k+1)})
            \leq h^{*}(p^{(k)},X^{(k)})-g^{*}(p^{(k)},X^{(k)})
            \leq g(p^{(k)})-h(p^{(k)})$,
            for all $k=0,1, \ldots$.
    \item
    \label{th:pdac_ii}
        $\displaystyle\lim_{k\to+\infty} ( g(p^{(k)})-h(p^{(k)}) )
        = \displaystyle\lim_{k\to+\infty} (h^{*}(p^{(k)},X^{(k)})-g^{*}(p^{(k)},X^{(k)}))
        = {\bar f} \geq f _{\inf}$.
    \item
    \label{th:pdac_iii}
        If the sequence $(p^{(k)})_{k\in \mathbb{N}}$ is bounded and ${\bar p}$
        is a cluster point of $(p^{(k)})_{k\in \mathbb{N}}$, then ${\bar p}\in \dom(g)$
        and there exists a cluster point ${\bar X}$ of $(X^{(k)})_{k\in \mathbb{N}}$
        such that
        \begin{subequations}
            \begin{equation}\label{pdac_eq1}
                \partial g({\bar p})\cap \partial h({\bar p})\neq \varnothing ,
            \end{equation}
            \begin{equation}\label{pdac_eq2}
                \lim_{k\to+\infty} ( h(p^{(k)})+h^{*}(p^{(k)},X^{(k)}) )
                    = h({\bar p})+h^{*}({\bar p},{\bar X})=0,
            \end{equation}
            \begin{equation}\label{pdac_eq3}
                \lim_{k\to+\infty} ( g(p^{(k)})+g^{*}(p^{(k)},X^{(k)}) )
                    = g({\bar p})+g^{*}({\bar p},{\bar X})=0.
            \end{equation}
            \begin{equation}\label{pdac_eq4}
                \partial_2 h^{*}({\bar p},{\bar X})\cap \partial_2 g^{*}({\bar p},{\bar X})
                    \neq \varnothing,
            \end{equation}
            \begin{equation}\label{pdac_eq5}
                g({\bar p})-h({\bar p})
                    = h^{*}({\bar p},{\bar X})-g^{*}({\bar p},{\bar X})
                    ={\bar f},
            \end{equation}
        \end{subequations}
    \end{enumerate}
\end{theorem}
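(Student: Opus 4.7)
The two identities that drive (i) are $h^{*}(p^{(k)},X^{(k)})=-h(p^{(k)})$, which follows from $X^{(k)}\in\partial h(p^{(k)})$ via \cref{th:eqsubdifcf}, and $g(p^{(k+1)})+g^{*}(p^{(k)},X^{(k)})=\langle X^{(k)},\exp^{-1}_{p^{(k)}}p^{(k+1)}\rangle$, which follows from $Y^{(k)}\in\partial_{2}g^{*}(p^{(k)},X^{(k)})$ together with $p^{(k+1)}=\exp_{p^{(k)}}Y^{(k)}$ via \cref{th:FYINPC}. The right-hand inequality in (i) then reduces to Fenchel–Young at $q=p^{(k)}$ (\cref{th:FYIN}), while the left-hand inequality is obtained by rewriting both quantities with these identities and invoking the subgradient inequality $h(p^{(k+1)})\geq h(p^{(k)})+\langle X^{(k)},\exp^{-1}_{p^{(k)}}p^{(k+1)}\rangle$. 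For (ii), \cref{pr:ffr} and assumption \ref{it:A2} make $(f(p^{(k)}))_{k}$ non-increasing and bounded below by $f_{\inf}$, so it converges to some $\bar f\geq f_{\inf}$, and the sandwich $f(p^{(k+1)})\leq\varphi(p^{(k)},X^{(k)})\leq f(p^{(k)})$ from (i) transfers the limit to the dual sequence.

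\textbf{Part (iii), the easy five-sixths.} Extract a subsequence along which $p^{(k)}\to\bar p$. \cref{pr:assympc} immediately supplies $\bar p\in\dom(g)\subseteq\intdom(h)$ (using \ref{it:A3}) and a cluster point $\bar X$ of $(X^{(k)})$ with $\bar X\in\partial g(\bar p)\cap\partial h(\bar p)$, which is \eqref{pdac_eq1}. \cref{th:eqsubdifcf} then gives the identities $h(p^{(k)})+h^{*}(p^{(k)},X^{(k)})=0$ for every $k$ as well as $h(\bar p)+h^{*}(\bar p,\bar X)=0$ and $g(\bar p)+g^{*}(\bar p,\bar X)=0$, settling \eqref{pdac_eq2} and the terminal equality in \eqref{pdac_eq3}. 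The limit in \eqref{pdac_eq3} is obtained by combining both inequalities of part (i) with $h^{*}(p^{(k)},X^{(k)})=-h(p^{(k)})$ into the squeeze
\[
0\leq g(p^{(k)})+g^{*}(p^{(k)},X^{(k)})\leq f(p^{(k)})-f(p^{(k+1)})\longrightarrow 0,
\]
with the lower bound coming from Fenchel–Young and the upper bound from (ii). For \eqref{pdac_eq4}, evaluate \cref{th:FYINPC} at $Y=0\in T_{\bar p}\mathcal M$: the two identities $g(\bar p)+g^{*}(\bar p,\bar X)=0=h(\bar p)+h^{*}(\bar p,\bar X)$ then say exactly that $0\in\partial_{2}g^{*}(\bar p,\bar X)\cap\partial_{2}h^{*}(\bar p,\bar X)$. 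Finally, subtracting these two identities delivers the first equality in \eqref{pdac_eq5}.

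\textbf{The main obstacle} is the remaining equality $g(\bar p)-h(\bar p)=\bar f$ in \eqref{pdac_eq5}. Because $\bar p\in\intdom(h)$, the convex function $h$ is continuous at $\bar p$, so $h(p^{(k)})\to h(\bar p)$, and then (ii) gives that $\lim_{k}g(p^{(k)})=\bar f+h(\bar p)$ exists. Lower semicontinuity of $g$ takes care of the easy direction $g(\bar p)\leq\bar f+h(\bar p)$. The reverse inequality is the delicate point, since lsc gives no upper bound. The plan is to exploit the optimality of $p^{(k+1)}$ in the equivalent \cref{Alg:DCA2} (invoking \cref{equivalenceAlg}) against the test point $q=\bar p$, which yields
\[
g(p^{(k+1)})-\langle X^{(k)},\exp^{-1}_{p^{(k)}}p^{(k+1)}\rangle\;\leq\;g(\bar p)-\langle X^{(k)},\exp^{-1}_{p^{(k)}}\bar p\rangle,
\]
and then pass to the limit using $d(p^{(k)},p^{(k+1)})\to 0$ (\cref{pr:consit}) so that $\exp^{-1}_{p^{(k)}}p^{(k+1)}\to 0$, together with \cref{pr:cont_pi} for the continuity of $\exp^{-1}$ and the inner product, to conclude $\bar f+h(\bar p)\leq g(\bar p)$, closing the argument.
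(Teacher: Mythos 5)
Your proof is correct and, for parts \ref{th:pdac_i}, \ref{th:pdac_ii} and most of \ref{th:pdac_iii}, follows the same route as the paper: the identities $h^{*}(p^{(k)},X^{(k)})=-h(p^{(k)})$ and $g(p^{(k+1)})+g^{*}(p^{(k)},X^{(k)})=\langle X^{(k)},\exp^{-1}_{p^{(k)}}p^{(k+1)}\rangle$, the Fenchel--Young inequality, and the monotonicity of $(f(p^{(k)}))_k$ are exactly the paper's ingredients. You deviate in three places, all legitimately. For \eqref{pdac_eq3} the paper writes $g(p^{(k)})+g^{*}(p^{(k)},X^{(k)})$ as a telescoping combination of the primal value, the dual value and the $h$-identity and passes to the limit term by term, whereas you squeeze it between $0$ and $f(p^{(k)})-f(p^{(k+1)})$; both work. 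For \eqref{pdac_eq4} the paper runs a limiting argument (Fenchel--Young at $q=p^{(k)}$, then $\liminf$ using lower semicontinuity of $h$ and \cref{pr:cont_pi}), while you simply evaluate \cref{th:FYINPC} at $Y=0$, which turns the two identities $g(\bar p)+g^{*}(\bar p,\bar X)=0=h(\bar p)+h^{*}(\bar p,\bar X)$ directly into $0\in\partial_2 g^{*}(\bar p,\bar X)\cap\partial_2 h^{*}(\bar p,\bar X)$; your version is shorter and avoids the limit entirely. The most substantive difference is the last equality in \eqref{pdac_eq5}: the paper establishes $\lim_k g(p^{(k)})=\bar f+h(\bar p)$ via continuity of $h$ on $\intdom(h)$ and then asserts $\lim_k g(p^{(k)})=\liminf_k g(p^{(k)})=g(\bar p)$ by appealing to lower semicontinuity alone, which only delivers $g(\bar p)\leq \bar f+h(\bar p)$; the reverse inequality is left implicit. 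You identify this as the delicate direction and close it by testing the optimality inequality \eqref{eq:sol} at $p=\bar p$ and passing to the limit with \cref{pr:consit} and \cref{pr:cont_pi}, which is precisely the device the paper itself uses in \cref{pr:assympc} to show $\bar X\in\partial g(\bar p)$. Your treatment of this step is therefore more complete than the paper's.
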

\begin{proof}\ \\[-2\baselineskip]
    \begin{enumerate}
        % (i)
        \item By applying \cref{th:FYIN} with $q=p^{(k+1)}$, $p=p^{(k)}$, $X=X^{(k)}$, and $f=h$,
        we obtain that
        $h(p^{(k+1)})+h^{*}(p^{(k)},X^{(k)})
            \geq \langle X^{(k)},\exp^{-1}_{p^{(k)}}p^{(k+1)}\rangle$.
        Since \eqref{eq:DCAS_natural} implies that
        $\exp^{-1}_{p^{(k)}}p^{(k+1)}\in \partial_2 g^{*}(p^{(k)},X^{(k)})$,
        we can apply \cref{th:FYINPC} with $f=g$, $p=p^{(k)}$, $Y=\exp^{-1}_{p^{(k)}}p^{(k+1)}$, and
        $X=X^{(k)}$ to obtain
        $\langle X^{(k)}, \exp^{-1}_{p^{(k)}}p^{(k+1)} \rangle=g(p^{(k+1)})+g^{*}(p^{(k)},X^{(k)})$.
        Hence, we have $h(p^{(k+1)})+h^{*}(p^{(k)},X^{(k)}) \geq g(p^{(k+1)})+g^{*}(p^{(k)},X^{(k)})$,
        which is equivalent to the first inequality of \cref{th:pdac_i}.
        To prove the second one, we first note that since $X^{(k)}\in \partial h(p^{(k)})$
        and $h$ is convex, by using \cref{th:eqsubdifcf},
        we have $h^{*}(p^{(k)},X^{(k)})+h(p^{(k)})=0$.
        Thus, applying \cref{th:FYIN} with $q=p=p^{(k)}$, $X=X^{(k)}$ and $f=g$, we have
        $0\leq g^{*}(p^{(k)},X^{(k)})+g(p^{(k)})$, which combined with the last equality yields
        the second inequality of \cref{th:pdac_i}.
        % (i)
        \item First we recall that $f = g-h$ satisfies assumption~\ref{it:A2}.
        Thus, \cref{th:pdac_i} implies that $(f (p^{(k)}))_{k\in \mathbb{N}}$ is
        non-increasing and convergent. Hence
        $\lim_{k\to+\infty}(g(p^{(k)})-h(p^{(k)}))\eqqcolon{\bar f}\in {\mathbb R}$.
        Moreover, by using again \cref{th:pdac_i}, we also have
        \begin{equation*}
            \lim _{k\to+\infty}
                (h^{*}(p^{(k)},X^{(k)})-g^{*}(p^{(k)},X^{(k)}))
                \eqqcolon {\bar f} \in {\mathbb R}.
        \end{equation*}
        Finally, the inequality in \cref{th:pdac_ii} follows from assumption~\ref{it:A2}.
        % (iii)
        \item To prove the first part, we assume that $(p^{(k)})_{k\in \mathbb{N}}$ is bounded
        and ${\bar p}$ a cluster point of $(p^{(k)})_{k\in \mathbb{N}}$.
        By using \cref{pr:assympc}, we conclude that ${\bar p}\in \dom (g)$ and that there
        exists a cluster point ${\bar X}$ of $(X^{(k)})_{k\in \mathbb{N}}$, such that
        ${\bar X}\in \partial g({\bar p})\cap \partial h({\bar p})$.
        Therefore, \eqref{pdac_eq1} is proved.
        Before proceeding with the proof we note that due to ${\bar p}\in \dom (g)$,
        assumption \ref{it:A3} implies that ${\bar p}\in \dom(h)$.

        To prove \eqref{pdac_eq2} note that since $X^{(k)}\in \partial h(p^{(k)})$,
        for all $k\in \mathbb{N}$, and $h$ is convex, from \cref{th:eqsubdifcf}, we have
        $h(p^{(k)})+h^{*}(p^{(k)},X^{(k)}) = 0$, for all $k \in \mathbb{N}$.
        Consequently,
        $\displaystyle\lim _{k\to+\infty} (h(p^{(k)})+h^{*}(p^{(k)},X^{(k)}))= 0.$
        Since ${\bar X}\in \partial h({\bar p})$, using again \cref{th:eqsubdifcf}, we have
        $h({\bar p})+h^{*}({\bar p},{\bar X}) =0$ and~\eqref{pdac_eq2} follows directly.

        To prove~\eqref{pdac_eq3} we first note that
        \begin{multline*}
            g(p^{(k)})+g^{*}(p^{(k)},X^{(k)})
            = g(p^{(k)})- h(p^{(k)})- \big(h^{*}(p^{(k)},X^{(k)})\\-g^{*}(p^{(k)},X^{(k)})\big)
                + h(p^{(k)})+ h^{*}(p^{(k)},X^{(k)}).
        \end{multline*}
        Thus, using \cref{th:pdac_ii} together with~\eqref{pdac_eq2}, we have
        $\displaystyle\lim_{k\to+\infty} (g(p^{(k)})+g^{*}(p^{(k)},X^{(k)}))=0$.
        Since ${\bar X}\in \partial g({\bar p})$, using again \cref{th:eqsubdifcf},
        we have $g({\bar p})+g^{*}({\bar p},{\bar X}) =0$, which combined with the last
        equality yields~\eqref{pdac_eq3}.

        We proceed to prove~\eqref{pdac_eq4}. For that, we assume without loss of generality
        that $\lim _{k\to+\infty} p^{(k)}={\bar p}$.
        Now, by applying \cref{th:FYIN} with $f=h$, $p={\bar p}$, $q= p^{(k)}$, we obtain
        \begin{equation*}
            h(p^{(k)})+h^{*}({\bar p},Y)
            \geq \langle Y,\exp^{-1}_{\bar p}p^{(k)} \rangle,
            \qquad \text{ for all } Y \in T_{{\bar p}}\mathcal M,
            \quad\text{ and all } k\in {\mathbb N}.
        \end{equation*}
        Thus, by using \cref{def:linf}, $\displaystyle\lim_{k\to+\infty} p^{(k)}={\bar p}$,
        \cref{pr:cont_pi}, \cref{pr:cont_pi_i} and \cref{pr:cont_pi_iii},
        and that $h$ is lsc, we have
        $h({\bar p})+h^{*}({\bar p},Y)
            =\displaystyle\liminf_{k\to+\infty}h(p^{(k)})+h^{*}({\bar p},Y) \geq 0$.
        Thus, the second equality in \eqref{pdac_eq2} implies that
        \begin{equation*}
            h^{*}({\bar p},Y) \geq h^{*}({\bar p},X),
                \qquad \text{ for all } Y \in T_{{\bar p}}\mathcal M.
        \end{equation*}
        Hence, $0\in \partial_2h^{*}({\bar p},X)$. Similarly, by using \eqref{pdac_eq3},
        we can also show that $0\in \partial_2g^{*}({\bar p},X)$.
        Therefore, $0\in \partial_2h^{*}({\bar p},X) \cap \partial_2g^{*}({\bar p},X)$,
        which proves \eqref{pdac_eq4}.

        Finally, we prove \eqref{pdac_eq5}.
        Combining the second equality in~\eqref{pdac_eq2} and~\eqref{pdac_eq3},
        we obtain the first equality in~\eqref{pdac_eq5}.
        To prove the second inequality, we first note that
        ${\bar p}\in \dom (g)\subset \mbox{\emph{int dom}}(h)$.
        Since $h$ is convex, it is continuous in $\mbox{\emph{int dom}}(h)$,
        which implies that $\lim _{k\to+\infty} h(p^{(k)})=h({\bar p})$.
        Thus, using \cref{th:pdac_ii}, we conclude that
        \begin{equation*}
            \lim_{k\to+\infty}g(p^{(k)})
            = \lim_{k\to+\infty}( g(p^{(k)})-h(p^{(k)})) + \lim _{k\to+\infty}h(p^{(k)})
            ={\bar f}+ h({\bar p}).
        \end{equation*}
        Hence, using \cref{def:linf}, we hav
        $\lim _{k\to+\infty} g(p^{(k)})=\liminf_{k\to+\infty} g(p^{(k)})= g({\bar p})$.
        Therefore, we obtain that $g({\bar p})- h({\bar p})=\bar f$,
        which concludes the proof.\qedhere
    \end{enumerate}
\end{proof}

\section{Examples}\label{Sec:Examples}
In this section we consider examples of DC functions on the Hadamard manifold of symmetric positive definite matrices. These examples can also be seen as constraint problems on the Euclidean space of square matrices, but they are not DC problems thereon. Only by imposing the manifold structure on the constrained set, namely the symmetric positive definite matrices set, both components of the problem become convex.

Formerly, we consider the symmetric positive definite (SPD) matrices cone ${\mathbb P}^{n}_{++}$.
Following~\cite{Rothaus1960}, see also \cite[Section 6.3]{NesterovTodd2002}, we introduce the Hadamard manifold,
\begin{equation} \label{eq:RiemPmatrix}
    \mathcal{M}\coloneqq ({\mathbb P}^n_{++}, \langle \cdot , \cdot \rangle)
\end{equation}
endowed with the Riemannian metric given by
\begin{equation} \label{eq:metric}
    \langle X,Y \rangle_p \coloneqq \operatorname{tr}(Xp^{-1}Yp^{-1}),
\end{equation}
for $p\in \mathcal{M}$ and $X,Y\in T_p\mathcal{M}$, where $\mbox{tr}(p)$ denotes the trace of the matrix $p\in {\mathbb P}^{n}_{++}$, $T_p\mathcal{M}\approx\mathbb{P}^n$ is the tangent space of $\mathcal{M}$ at $p$ and ${\mathbb P}^{n}$ denotes the set of symmetric matrices of order $n\times n$.
Further details about the Hadamard manifold $\mathcal{M}$ can be found, for example, in~\cite[Theorem~1.2. p. 325]{Lang1999}.
The \emph{exponential map and its inverse} at a point $p\in \mathcal{M}$ are given, respectively,  by
\begin{align}
    \exp_{p}X       & \coloneqq p^{1/2} e^{p^{-1/2}Xp^{-1/2}} p^{1/2}, \qquad X\in T_p\mathcal{M}, p \in \mathcal M \label{eq:exp}\\
    \exp^{-1}_{p}{q}& \coloneqq p^{1/2} \log({p^{-1/2}qp^{-1/2}}) p^{1/2}, \qquad p, q\in \mathcal{M}  \label{eq:invexp}.
\end{align}
The \emph{dimension} of the manifold is given by $\operatorname{dim}_{\mathbb P_{++}^n} = \frac{n(n+1)}{2}$.

The \emph{gradient} of a differentiable function  $f\colon{\mathbb P}^n_{++}\to {\mathbb R}$ is  given by
\begin{equation}
\label{eq:Grad}
\grad f(p)=pf'(p)p.
\end{equation}
where  $f'(p)$ is the Euclidean gradient of $f$ at $p$. If $f$ is twice differentiable, then the \emph{hessian} of $f$ is given by
\begin{equation}
\label{eq:Hess}
\operatorname{Hess}\,f(p)X=pf''(p)Xp+\frac{1}{2}\bigl[  Xf'(p)p+  pf'(p)X \bigr],
\end{equation}
where $X\in T_p\mathcal{M}$ and  $f''(p)$ is  the  Euclidean hessian of  $f$ at $p$.

In general, subproblem~\eqref{eq:DCAS} in \cref{Alg:DCA2} is not convex; nevertheless, in some special cases, as illustrated by the following examples, it actually is convex.
To begin, recall that the gradient and hessian of a function $p\mapsto \varphi(\det(p))$, where $\varphi\colon\mathbb{R}_{++}\to \mathbb{R}$ is  twice differentiable, is given by
\begin{align}
\grad\varphi(\det(p))&=\bigl(\varphi'(\det(p))\det(p)\bigr)p, \label{eq:Graddetf}\\
\operatorname{Hess}\,\varphi(\det(p))v&=\bigl(\varphi''(\det(p))(\det(p))^2+\varphi'(\det(p))\det(p)\bigr)tr(p^{-1}v)p\label{eq:Hessdetf},
\end{align}
where $v\in T_p\mathcal{M}$, $\varphi'$ and $\varphi''$ are the first and second derivative of $\varphi$, respectively.
\begin{example} \label{ex:loglog}
    Consider the following  optimization  problem
    \begin{equation} \label{eq:loglog}
    \argmin_{p\in \mathcal M} f(p), \qquad \text{where }  f(p)\coloneqq \varphi_1(\det(p))-\varphi_2(det(p)),
    \end{equation}
    where the function $\varphi_i\colon\mathbb{R}_{++}\to \mathbb{R}$ are twice differentiable satisfying  $\varphi_i''(t)t^2+\varphi_i'(t)t\geq 0$, for all $t\in \mathbb{R}_{++}$ and $i=1,2$.
    Indeed, by using~\eqref{eq:Grad} and~\eqref{eq:Hess}, we can show that~\eqref{eq:loglog} is a DC problem with components
    \begin{equation} \label{eq:loglog1}
    g(p)=\varphi_1(\det(p)), \qquad \quad h(p)=\varphi_2(\det(p)).
    \end{equation}
    This follows from~\eqref{eq:Hessdetf} and $\varphi_i''(t)t^2+\varphi_i'(t)t\geq 0$, for all $t\in \mathbb{R}_{++}$ that $\langle \operatorname{Hess}\,\varphi_i(\det(p))X, X\rangle \geq 0$, for all $X\in T_p\mathcal{M}$ and $i=1,2$, which implies that $g$ and $h$ are convex.

    By using \eqref{eq:Graddetf} we conclude that critical points of $f$ are  matrices ${\bar p}\in \mathbb{P}_{++}^n$ such that
    \begin{equation} \label{eq:DCCriticalPointsG}
        \varphi'_1(\det({\bar p}))=\varphi'_2(\det({\bar p})).
    \end{equation}
    Now, considering that  $h$ is a differentiable function, consider the subproblem associated to the problem~\eqref{eq:loglog}
    \begin{equation} \label{eq:diffcasedetf}
        \argmin_{p\in \mathcal M}  \psi(p),   \qquad  \text{where } \psi(p) \coloneqq \varphi_1(\det(p))-\big\langle  \grad(\varphi_2(\det(q))), \exp^{-1}_{q}{p} \big\rangle
    \end{equation}
    It is worth noting that if we use \cref{Alg:DCA2} to solve problem~\eqref{eq:loglog}, the subproblem~\eqref{eq:DCAS} to be addressed has the form~\eqref{eq:diffcasedetf}.
    In general, subproblem~\eqref{eq:DCAS} is not convex; nevertheless, we will show now that~\eqref{eq:diffcasedetf} is a convex problem.
    In fact, by using second equality in~\eqref{eq:Graddetf} it follows from~\eqref{eq:diffcasedetf} that
    \begin{equation} \label{eq:diffcasedetf1}
        \psi(p) =\varphi_1(\det(p))-\bigl(\varphi'_2(\det(q))\det(q)\bigr) \big\langle  q, \exp^{-1}_{q}{p} \big\rangle.
    \end{equation}
    On the other hand, by using the exponential in~\eqref{eq:invexp} and the metric in~\eqref{eq:metric} we obtain that
    \begin{equation*}
        \big\langle q, \exp^{-1}_{q}{p} \big\rangle= \mbox{tr}\big( \log({q^{-1/2}pq^{-1/2}}) \big).
    \end{equation*}
    Since $ \operatorname{tr}\log Z =\log \det Z$, for any matrix $Z$, the last equality becomes
    \begin{equation} \label{eq:cipmdetf}
        \big\langle  q, \exp^{-1}_{q}{p} \big\rangle= \log \det (p) - \log \det (q).
    \end{equation}
    Combining~\eqref{eq:diffcasedetf1} with~\eqref{eq:cipmdetf}, the function $\psi$ in subproblem \eqref{eq:diffcasedetf} is rewritten equivalently as
    \begin{equation} \label{eq:diffcasedetffd}
        \psi(p) =\varphi_1(\det(p))-\bigl(\varphi'_2(\det(q))\det(q)\bigr) \bigl(\log \det (p) - \log \det (q)\bigr).
    \end{equation}
    Since the matrix $q\in \mathcal{M}$  is fixed and the function $g(p)=\varphi_1(\det(p))$  is convex, proving that $\psi$ is convex is sufficient to prove that the function $\Upsilon(p)=-\log \det(p)$ is convex.
    Applying \eqref{eq:Hessdetf} with $\varphi=\log$ we conclude that $\operatorname{Hess}\,\Upsilon(p)=0$, for all $p$,  which implies that $\Upsilon$ is convex.
    In conclusion, the objective function $f$ in problem~\eqref{eq:loglog} is not convex in general, while the function $\psi$ in the associated subproblem \eqref{eq:diffcasedetf} is.
    Let us conclude by presenting some  functions  $\varphi\colon \mathbb{R}_{++}\to \mathbb{R}$ satisfying the condition $\varphi''(t)t^2+\varphi'(t)t\geq~0$, for all $t\in \mathbb{R}_{++}$:
    \begin{enumerate}
        \item $\varphi_1(t)=a_1(\log(t))^{2(b+1)}$ and $\varphi_2(t)=a_2(\log(t))^{2b}$  with  $a_1, a_2 \in  \mathbb{R}_{++}$ and  $b\geq 1$. \label{ex:loglog:concrete1}
        \item $\varphi_1(t)={\bar a}\log(t^b+c_1)-{\hat a}\log(t)$  and  $\varphi_2(t)=\log(t+c_2)$  with  ${\bar a}, {\hat a}, b, c_1, c_2 \in  \mathbb{R}_{++}$. Note that,   if $ab>d+1$,  then $\varphi_1-\varphi_2$ has a critical point.
        \item $\varphi_1(t)=a_1t^{b_1+2}$ and $\varphi_2(t)=a_2t^{b_2+2}$ with  $a_1, a_2,   b_1, b_2 \in  \mathbb{R}_{+}$.
    \end{enumerate}
    Finally, it is worth noting that these functions $g$ and $h$ in~\eqref{eq:loglog1} associated with these  problems are in general not Euclidean convex functions.
    Consequently, \eqref{eq:loglog} is not a Euclidean DC problem. As we just derived, they are DC in the Hadamard  manifold~\eqref{eq:RiemPmatrix}.
\end{example}
Let us examine at another set of examples that are not DC Euclidean problems but are DC in the Hadamard manifold \eqref{eq:RiemPmatrix} described above.
\begin{example}
Consider the following optimization problem
\begin{equation} \label{eq:trlog}
\argmin_{p\in \mathcal M} f(p), \qquad \text{where } f(p)\coloneqq \varphi_1(\mbox{tr}(p))-\varphi_2(det(p)),
\end{equation}
where the function $\varphi_i\colon\mathbb{R}_{++}\to \mathbb{R}$ are twice differentiable satisfying  the following conditions
\begin{equation} \label{eq:condtrlog}
\varphi_1'(t)\geq 0, \quad \varphi_1''(t)\geq 0, \quad  \varphi_2''(t)t^2+ \varphi_2(t)t\geq 0 \qquad \forall t\in \mathbb{R}_{++}.
\end{equation}
In general, the objective function $f$ in the  problem~\eqref{eq:trlog} is not convex in either the Euclidean context nor the Hadamard manifold~\eqref{eq:RiemPmatrix}.
However, by using~\eqref{eq:Grad} and~\eqref{eq:Hess},   we prove that the components in~\eqref{eq:trlog}, denoted by
\begin{equation} \label{eq:trlog1}
    g(p)=\varphi_1(\mbox{tr}(p)), \qquad\text{ and }\qquad h(p)=\varphi_2(\det(p)),
\end{equation}
  which, in general, are not convex Euclidean, are convex functions on the Hadamard manifold~\eqref{eq:RiemPmatrix}  since conditions in~\eqref{eq:condtrlog} hold.
  Therefore, \eqref{eq:trlog} is a DC optimization  problem.
  In addition, by using \eqref{eq:Grad}, we can show that the gradients of $g$ and $h$ are given by
  \begin{equation} \label{eq:gradghmG}
    \grad  g(p)= \varphi_1'(\mbox{tr}(p)) p^2, \qquad \qquad  \grad  h(p)= \bigl( \varphi_2'(\det(p)) \det(p)\bigr)p,
  \end{equation}
  respectively.
  By using \eqref{eq:gradghmG} we conclude that critical points of $f$ are matrices ${\bar p}\in \mathbb{P}_{++}^n$ such that
\begin{equation} \label{eq:DCCriticalPointtrdet}
 \varphi_1'(\mbox{tr}({\bar p})){\bar p}=\bigl( \varphi_2'(\det({\bar p})) \det {\bar p}\bigr)I.
\end{equation}
Using the same arguments as in \cref{ex:loglog}, we can show that the subproblem associated with problem~\eqref{eq:trlog} is given by
\begin{equation} \label{eq:Subtrlog}
  \argmin_{p\in \mathcal M}  \psi(p),   \quad  \text{where }   \psi(p) =\varphi_1(\mbox{tr}(p))-\bigl(\varphi'_2(\det(q))\det(q)\bigr) \bigl(\log \det (p) - \log \det (q)\bigr),
\end{equation}
for a fixed $q\in \mathcal{M}$, and  the objective function $\psi$  is convex.
Finally, let us present some functions satisfying the condition~\eqref{eq:condtrlog}.
\begin{enumerate}
\item $\varphi_1(t)=a_1t^{b_1}$  and  $\varphi_2(t)=a_2t^{b_2}$ with $a_1\geq1$ and  $a_2, b_1, b_2>0$ such that $a_1b_1n^{b_1-1}=a_2b_2$.
\item $\varphi_1(t)=ae^{bt}$  and  $\varphi_2(t)=\frac{1}{2}abe^{nb}t^2$, with $a, b>0$.
\end{enumerate}
\end{example}
\section{Numerics}\label{Sec:Numerics}

In this section, we present several numerical examples. On the one hand, we compare the algorithm to two existing algorithms and, on the other hand, illustrate in a third example how optimization problems can be reformulated into DC problems to use this structure as an advantage in numerical computations. For all numerical examples, the \cref{Alg:DCA2} is implemented in
Julia 1.8.5~\cite{BezansonEdelmanKarpinskiShah2017} within the package
\lstinline!Manopt.jl!~\cite{Bergmann2022}
version 0.4.12, using a trust region solver to solve the optimization problem
in~\eqref{eq:DCAS} within every step,
including a generic implementation of the corresponding cost and gradient.
This way, the algorithm is easy-to-use, while when a more efficient computation for either
cost and gradient of the sub problem or even a closed form solution is available, they
can benefit to speed up the computation, when provided.
Together with \lstinline!Manifolds.jl!~\cite{AxenBaranBergmannRzecki2021}
this algorithm can be used on arbitrary manifolds.
All times refer to running the experiments on an Apple MacBook Pro M1 (2021), 16 GB Ram, Mac OS Ventura 13.0.1.

\subsection{A comparison to the Difference of Convex Proximal Point Algorithm}

We first consider the problem
\begin{equation*}
    \argmin_{p\in\mathcal M}  \bigl( \log\bigr(\det(p)\bigr)\bigr)^4 - \bigl(\log \det(p) \bigr)^2.
\end{equation*}
on $\mathcal P_{++}^n$.   Here we have $f(p) = g(p) - h(p)$ where
$g(p) = \varphi_1\bigl(\det(p)\bigr)$, $\varphi_1(t) = (\log t)^4$,
and $h(p) = \varphi_2\bigl(\det(p)\bigr)$, $\varphi_2(t) = (\log t)^2$,
which fits \cref{ex:loglog}, \cref{ex:loglog:concrete1}.  The critical points of this problem are the matrices $p^*\in \mathcal P_{++}^n$ such that $\det(p^*)=e^{1/\sqrt{2}}$. We have $f(p^*) = -\frac{1}{4}$,  for each critical point $p^*$.

We compare the DCA with the Difference of Convex Proximal Point Algorithm (DCPPA)
as introduced in~\cite{SouzaOliveira2015}. The algorithm is also available in \lstinline!Manopt.jl!,
implemented in the same generic manner, as the DCA explained above.
This means that the proximal map can be considered as a subproblem to solve.
When only $g$ and its gradient $\operatorname{grad} g$ are provided, the subproblem
is generated in a generic manner, that is, a default implementation of the minimization problem
that corresponds to step 3 of the DCPPA-Algorithm from~\cite{SouzaOliveira2015} is generated.
This is also the scenario we use for our example.
For the case that $\operatorname{prox}_{\lambda g}(p)$ is available, e.g.\ in closed form, it can be provided to the algorithm for speed-up.

For both DCA and DCPPA, the generation of the generic subproblem is the default in
\lstinline!Manopt.jl! as soon as the gradient $\operatorname{grad} g$ of $g$ is provided.
The function calls look like

\begin{lstlisting}
difference_of_convex_algorithm(M, f, g, grad_h, p0; grad_g=grad_g)
difference_of_convex_proximal_point(M, grad_h, p0; g=g, grad_g=grad_g)
\end{lstlisting}

By default, further an approximation the Hessian of both sub problems by a Riemannian variant
of forward difference from the gradient is used.
This enables the use of the~\lstinline!trust_regions!\footnote{see \href{https://manoptjl.org/stable/solvers/trust_regions/}{manoptjl.org/stable/solvers/trust\_regions/} for details}
algorithm to solve the sub-problem.
To make both algorithms comparable we
\begin{itemize}
    \item for both sub solvers we \emph{stop} when the gradient norm (of the subproblem's gradient) is below $10^{-10}$ or after $5000$ iterations
    if the gradient does not get small.
    \item for both algorithms DCA and DCPPA when the gradient norm (of $f$) is below $10^{-10}$. We also have a fall back to stop after $100$ iterations if the gradient norm is not hit.
    \item the proximal parameter in the DCPPA to a constant of $\lambda=\frac{1}{2n}$
    \item for both algorithms we set $p^{(0)} = \log(n) I_n$ as the initial point, where $I_n$ denotes the $n\times n$ identity matrix
\end{itemize}

For the matrix size $n$ of $\mathbb P_{++}^n$ we set $n=2,3,\ldots,80$ to compare the algorithm
for different manifold sizes, which yields manifolds of dimension $d=\frac{n(n+1)}{2}$.

In \cref{fig:DCAvsDCPPA-time} we compare the different run times for both the DCA and DCPPA.
These were obtained using the \lstinline!@benchmark! macro from \lstinline!BenchmarkTools.jl!~\cite{ChenRevels2016},

Up to a dimension of approximately $d=40$ (or $8\times 8$ spd.\ matrices) the DCA is faster.
This includes the important case of $3\times 3$ spd.\ matrices, that is one representation of diffusion tensors,
where the DCA takes only $5.2434\cdot10^{-3}$ seconds while the DCPPA takes
$2.2672\cdot10^{-2}$ seconds.
For higher-dimensional problems, cf. \cref{fig:DCAvsDCPPA-time:high},
the DCPPA seems to only increase very slowly, where $d=465$, or $30\times 30$ spd.\ matrices,
seems to be an outlier, where DCPPA takes over $22$ seconds, while otherwise it
stays around about half a second, even for the last case shown,
i.e.\ $d=99$ (or $44\times 44$ spd.\ matrices).

\begin{figure}[tbp]\centering
    \begin{subfigure}{.49\textwidth}
        \begin{tikzpicture}
            \begin{axis}[
                ymin=0,
                ymax=0.15,
                xmin=0,
                xmax=100,
                legend style={draw=none, legend columns=-1, at={(0.5,1.0)},anchor=north},
                width=.9\textwidth,
                height=.2\textheight,
                axis lines=left,
                ylabel near ticks,
                xlabel near ticks,
                xlabel={manifold dimension $d=\operatorname{dim}_{\mathbb P_{++}^n}$},
                ylabel={run time (sec.)},
            ]
                \addplot[color=TolMutedIndigo, thick]
                    table [x=d, y=dcat, col sep=comma] {data/DCAvsDCPPA-80-benchmark-shortened.csv};
                \addplot[color=TolMutedTeal, thick]
                    table [x=d, y=dcppat, col sep=comma] {data/DCAvsDCPPA-80-benchmark-shortened.csv};
                \legend{DCA, DCPPA}
            \end{axis}
        \end{tikzpicture}
        \caption{Low dimensional performance}
        \label{fig:DCAvsDCPPA-time:low}
    \end{subfigure}
    \begin{subfigure}{.49\textwidth}
        \begin{tikzpicture}
            \begin{axis}[
                ymin=0,
                ymax=13,
                xmin=0,
                xmax=1000,
                legend style={draw=none, legend columns=-1, at={(0.5,1.0)},anchor=north},
                width=.9\textwidth,
                height=.2\textheight,
                axis lines=left,
                ylabel near ticks,
                xlabel near ticks,
                xlabel={manifold dimension $d=\operatorname{dim}_{\mathbb P_{++}^n}$},
                ylabel={run time (sec.)},
            ]
                \addplot[color=TolMutedIndigo, thick]
                    table [x=d, y=dcat, col sep=comma] {data/DCAvsDCPPA-80-benchmark.csv};
                \addplot[color=TolMutedTeal, thick]
                    table [x=d, y=dcppat, col sep=comma] {data/DCAvsDCPPA-80-benchmark.csv};
                \legend{DCA, DCPPA}
            \end{axis}
        \end{tikzpicture}
        \caption{High dimensional performance}
        \label{fig:DCAvsDCPPA-time:high}
    \end{subfigure}
    \caption{A comparison of the run times of DCA and DCPPA for different manifold dimensions.}
    \label{fig:DCAvsDCPPA-time}
\end{figure}
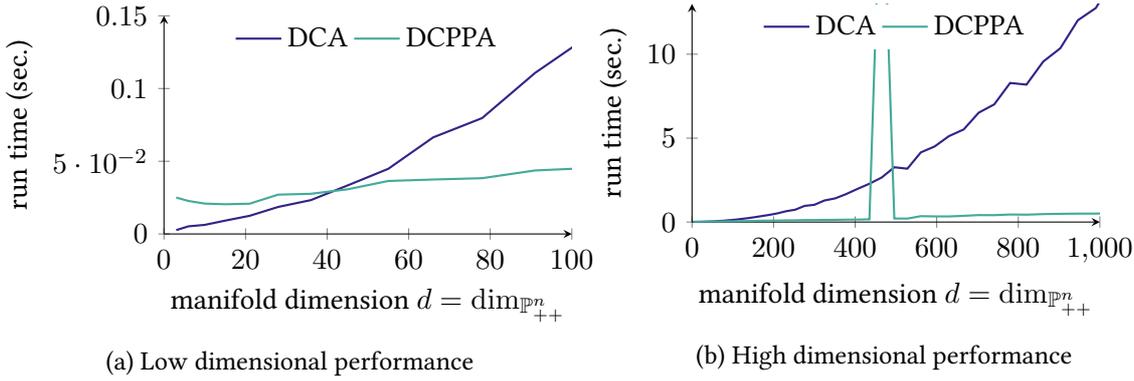

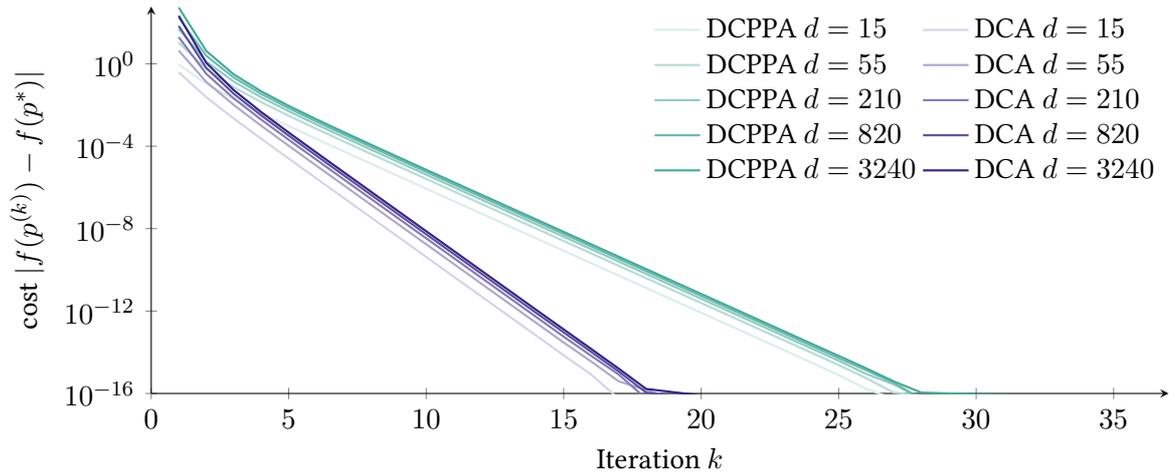
\begin{figure}[tbp]\centering
        \begin{tikzpicture}
            \begin{axis}[
                ymin=1e-16,
                ymax=6*1e2,
                ymode=log,
                xmin=0,
                xmax=37,
                legend style={draw=none, transpose legend, legend columns=5, at={(1.0,1.0)},anchor=north east},
                legend cell align={left},
                width=.95\textwidth,
                height=.3\textheight,
                axis lines=left,
                ylabel near ticks,
                xlabel near ticks,
                xlabel={Iteration $k$},
                ylabel={cost $\lvert f(p^{(k)})-f(p^*) \rvert$},
            ]
            \addplot[color=TolMutedTeal!20, thick]
                table [x=i, y=fabs, col sep=comma] {data/DCAvsDCPPA-80-DCPPA-5.csv};
            \addplot[color=TolMutedTeal!40, thick]
                table [x=i, y=fabs, col sep=comma] {data/DCAvsDCPPA-80-DCPPA-10.csv};
            \addplot[color=TolMutedTeal!60, thick]
                table [x=i, y=fabs, col sep=comma] {data/DCAvsDCPPA-80-DCPPA-20.csv};
            \addplot[color=TolMutedTeal!80, thick]
                table [x=i, y=fabs, col sep=comma] {data/DCAvsDCPPA-80-DCPPA-40.csv};
            \addplot[color=TolMutedTeal, thick]
                table [x=i, y=fabs, col sep=comma] {data/DCAvsDCPPA-80-DCPPA-80.csv};
            \addplot[color=TolMutedIndigo!20, thick]
                table [x=i, y=fabs, col sep=comma] {data/DCAvsDCPPA-80-DCA-5.csv};
            \addplot[color=TolMutedIndigo!40, thick]
                table [x=i, y=fabs, col sep=comma] {data/DCAvsDCPPA-80-DCA-10.csv};
            \addplot[color=TolMutedIndigo!60, thick]
                table [x=i, y=fabs, col sep=comma] {data/DCAvsDCPPA-80-DCA-20.csv};
            \addplot[color=TolMutedIndigo!80, thick]
                table [x=i, y=fabs, col sep=comma] {data/DCAvsDCPPA-80-DCA-40.csv};
            \addplot[color=TolMutedIndigo, thick]
                table [x=i, y=fabs, col sep=comma] {data/DCAvsDCPPA-80-DCA-80.csv};
            \legend{DCPPA $d=15$,% ($n=5$),%
                    DCPPA $d=55$,% ($n=10$),%
                    DCPPA $d=210$,% ($n=20$),%
                    DCPPA $d=820$,% ($n=40$),%
                    DCPPA $d=3240$,% ($n=80$),%
                    DCA $d=15$,% ($n=5$),%
                    DCA $d=55$,% ($n=10$),%
                    DCA $d=210$,% ($n=20$),%
                    DCA $d=820$,% ($n=40$),%
                    DCA $d=3240$,% ($n=80$)
                }
            \end{axis}
        \end{tikzpicture}
    %    \caption{Development of the Cost $f(p)$.}
    %\end{subfigure}
    \caption{A comparison of how close the cost function is to the actual minimum for different sizes of problems and both algorithms.}
    \label{fig:DCAvsDCPPA-cost}
\end{figure}

Comparing the number of iterations, we observe that after the first 5 experiments,
so starting from a dimension of $21$ ($6\times 6$ spd. matrices), the number of
iterations stabilizes around $25$ iterations for the DCA and $38$ for the DCPPA.

We compare different developments of the cost function in \cref{fig:DCAvsDCPPA-cost}.
Since for all dimensions we know that $f(p^*) = -\frac{1}{4}$ we plot $\lvert f(p^{(k)}) - f(p^*) \rvert$ over the iterations for the manifold
dimensions $d=15, 55, 210, 820, 3240$, that is the $n\times n$ matrices for $n=5,10,20,40,80$.
The initial value $p^{(0)}$ was chosen as above, which yields that the value $f(p^{(1)})$
is always below $10^{3}$ in our experiments.
All these different dimensions show the same slope in the decrease of the cost function $f(p)$
for both the DCA as well as the DCPPA.
While for DCPPA the cost seems to be below
$10^{-16}$ close to the minimum for a few iterations already, before the stopping criterion of a gradient norm
$\rVert \operatorname{grad} f(p^{(k)})\lVert_{p^{(k)}} < 10^{-10}$ is reached.

The development of the cost function illustrates, that DCA converges faster than
DCPPA, such that the choice of the sub solver seems to be crucial for the run time,
which for these experiments we configures equally to compare the algorithms and
not sub solvers.

\subsection{The Rosenbrock Problem}
The Rosenbrock problem consists of
\begin{equation}\label{eq:Rosenbrock}
    \argmin_{x\in\mathbb R^2} a\bigl( x_{1}^2-x_{2}\bigr)^2 + \bigl(x_{1}-b\bigr)^2,
\end{equation}
where $a,b>0$ are positive numbers, classically $b=1$ and $a \gg b$, see \cite{Rosenbrock1960}.
Note that the function is non-convex on $\mathbb R^2$.
The minimizer $x^*$ is given by $x^* = (b,b^2)^\mathrm{T}$,
and also the (Euclidean) Gradient can be directly stated as
\begin{equation}\label{eq:Rosenbrock:gradf}
    \nabla f(x) =
    \begin{pmatrix}
        4a(x_1^2-x_2)\\
        -2a(x_1^2-x_2)
    \end{pmatrix}
    +
    \begin{pmatrix}
        2(x_1-b)\\
        0
    \end{pmatrix}
\end{equation}

We introduce a new metric for $\mathcal M = \mathbb R^2$:
For any $p \in \mathbb R^2$ we define
\begin{equation*}
    G_p \coloneqq
    \begin{pmatrix}
        1+4p_{1}^2 & -2p_{1} \\
        -2p_{1} & 1
\end{pmatrix}, \text{ which has the inverse matrix } G^{-1}_p =
\begin{pmatrix}
    1 & 2p_1\\
    2p_{1} & 1+4p_{1}^2 \\
\end{pmatrix}.
\end{equation*}
We define the inner product on $T_p\mathcal M = \mathbb R^2$ as
\begin{equation*}
    \langle X,Y \rangle_p = X^\mathrm{T}G_pY
\end{equation*}
In the following we refer to $\mathbb R^2$ with the default Euclidean metric
further as just $\mathbb R^2$ and to the same space with this new metric as $\mathcal M$.

The exponential and logarithmic map are given as
\begin{equation*}
    \exp_p(X) =
    \begin{pmatrix}
        p_1 + X_1\\
        p_2 + X_2 + X_1^2
    \end{pmatrix},\qquad
    \exp^{-1}_p(q) =
    \begin{pmatrix}
        q_1 - p_1\\
        q_2 - p_2 - (q_1-p_1)^2
    \end{pmatrix}.
\end{equation*}

Given some function $h\colon \mathcal M \to \mathbb R$, its Riemannian gradient
$\operatorname{grad} h\colon \mathcal M \to T\mathcal M$ can be computed from the
Euclidean one by
\begin{equation*}
    \operatorname{grad} h(p) = G_p^{-1}\nabla h(p).
\end{equation*}
Denoting the two components of the Euclidean gradient by $\nabla h(p) = (h'_1(p), h'_2(p))^\mathrm{T}$ we can derive that given two points $p,q\in\mathcal M$ we have
\begin{equation}\label{eq:gradh-log}
    \begin{split}
        \Bigl\langle
        \operatorname{grad} h(q),
        \exp_q^{-1}(p)
        \Bigr\rangle_q
        &=
         \bigl(\exp_q^{-1}(p)\bigr)^\mathrm{T}
        \nabla h(q)
        \\&
        = (p_1-q_1)h'_1(q) + (p_2 - q_2 - (p_1-q_1)^2)h'_2(q)
    \end{split}
\end{equation}

For the difference of convex algorithm we split the cost function from the Rosenbrock
problem \eqref{eq:Rosenbrock} as $f(x) = g(x) - h(x)$ with
\begin{equation*}
    g(x) = a\bigl( x_{1}^2-x_{2}\bigr)^2 + 2\bigl(x_{1}-b\bigr)^2
    \quad\text{ and }\quad
    h(x) = \bigl(x_{1}-b\bigr)^2.
\end{equation*}
Using the isometry $\psi\colon \mathbb R^2 \to \mathcal M, \mathbf{z} \mapsto (z_1, z_1^2-z_2)$
we get
\begin{equation*}
    (h\circ\psi)(x) = h(x_1, x_1^2-x_2) = (x_1-b)^2
\end{equation*}
and hence $h$ is (geodesically) convex on $\mathcal M$.

The corresponding Euclidean gradients of $g$ and $h$ are
\begin{equation*}
    \nabla g(p) =
    \begin{pmatrix}
        4a(x_1^2-x_2)\\
        -2a(x_1^2-x_2)
    \end{pmatrix}
    +
    \begin{pmatrix}
        4(x_1-b)\\
        0
    \end{pmatrix}
    \quad\text{ and }\quad
    \nabla h(p) =
    \begin{pmatrix}
        2(p_1-b)\\
        0
    \end{pmatrix},
\end{equation*}
So especially the second component $h_2'(p) = 0$.
\\
Considering the sub-problem \cref{eq:DCAS} from \cref{Alg:DCA2}, we obtain
together with \eqref{eq:gradh-log} for some fixed $q$ that
\begin{equation*}
    \begin{split}
        \varphi(p)
        &=
        g(p) -
        \Bigl\langle
        \operatorname{grad} h(q),
        \exp_q^{-1}(p)
        \Bigr\rangle_q
        \\
        &=
        a\bigl( p_{1}^2-p_{2}\bigr)^2
        + 2\bigl(p_{1}-b\bigr)^2 - 2(q_1-b)(p_1-q_1)
        \\
        &= a\bigl( p_{1}^2-p_{2}\bigr)^2
        + 2\bigl(p_{1}-b\bigr)^2 - 2(q_1-b)p_1 + 2(q_1-b)q_1,
    \end{split}
\end{equation*}
where the last term is constant with respect to $p$ and hence
irrelevant when determining a minimizer. The Euclidean Gradient reads
\begin{equation*}
    \nabla \varphi(p)
    = \begin{pmatrix}
        4a p_1(p_1^2-p_2) + 4(p_1-b) - 2(q_1-b)\\
        -2a(p_1^2-p_2)
    \end{pmatrix}
\end{equation*}
and the Riemannian gradient is similar to before $\operatorname{grad} \varphi(p)
= G_p^{-1}\nabla \varphi(p)$.
This allows to use for example the Riemannian gradient descent from \lstinline!Manopt.jl!
to be used as a sub-solver for \cref{eq:DCAS} within \cref{Alg:DCA2}.

Since also for the Rosenbrock function the Riemannian gradient can be easily computed in the same manner from \eqref{eq:Rosenbrock:gradf},
we can now compare three different first order methods:
\begin{enumerate}
    \item The Euclidean gradient descent algorithm on $\mathbb R^2$,
    \item The Euclidean Difference of Convex Algorithm on $\mathbb R^2$
    \item The Riemannian gradient descent algorithm on $\mathcal M$,
    \item The Riemannian Difference of Convex Algorithm on $\mathcal M$,
    using Riemannian gradient descent as a sub-solver
\end{enumerate}
All algorithms use \lstinline!ArmijoLinesearch(M)! when determining the step size in gradient descent,
and all stop either after 10 million steps, or when the change between two successive iterates drops below \lstinline!1e-16!.
The sub solver in the DCA is set to stop when the gradient is at \lstinline!1e-16! in norm or at 1000 iterations.

\begin{figure}[tbp]\centering
    \begin{tikzpicture}
        \begin{axis}[
            xmode=log,
            ymode=log,
            ymin=1e-16,
            ymax=3*1e4,
            xmin=0,
            xmax=8*1e7,
            legend style={draw=none, legend columns=-1, at={(0.5,1.1)},anchor=north},
            width=.9\textwidth,
            height=.25\textheight,
            axis lines=left,
            ylabel near ticks,
            xlabel near ticks,
            xlabel={Iteration $k$},
            ylabel={$f(p^{(k)})$},
        ]
            \addplot[color=TolMutedOlive, thick]
                table [x=i, y=y, col sep=comma] {data/Rosenbrock2D-200000-1-Eucl-GD.csv};
            \addplot[color=TolMutedSand, thick]
                table [x=i, y=y, col sep=comma] {data/Rosenbrock2D-200000-1-Eucl-DoC.csv};
            \addplot[color=TolMutedTeal, thick]
                table [x=i, y=y, col sep=comma] {data/Rosenbrock2D-200000-1-Riemannian-GD.csv};
            \addplot[color=TolMutedIndigo, thick]
                table [x=i, y=y, col sep=comma] {data/Rosenbrock2D-200000-1-Riemannian-DoC.csv};

            \legend{Euclidean GD, Euclidean DCA, Riemannian GD, Riemannian DCA}
        \end{axis}
    \end{tikzpicture}
    \caption{A comparison of the cost function during the iterations of the four experiments performed on the Rosenbrock example.}\label{fig:Rosenbrock}
\end{figure}
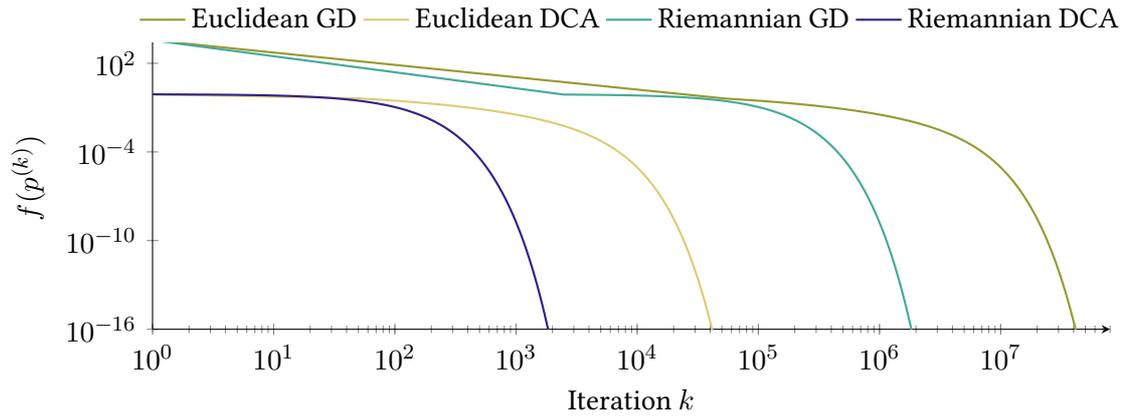
We set $b=1$ and $a=2\cdot10^5$. All algorithms start in $p^{(0)} = \frac{1}{10}(1, 2)^\mathrm{T}$.
The initial cost is $f(p^{(0)}) \approx 7220.81$.
The runtime and number of iterations is depicted in \cref{tab:Rosenbrock}
and the development of the cost function during the iterations in \cref{fig:Rosenbrock}.

For the cost $f(p^{(k)})$ during the iterations, we can observe that both gradient
algorithms as well as both difference of convex algorithms perform similar in shape,
both groups even have similar gain in their first step.

Still, even for the Euclidean case, the gradient descent with Armijo step size
requires several orders of magnitude more iterations than the Euclidean difference
of convex algorithm. The Riemannian gradient descent outperforms the Euclidean one
both in number of iterations as well as overall runtime.
Since a single iteration in the difference of convex algorithm requires to solve
a sub optimization problem, and we even employ a gradient descent per iteration,
even the Euclidean DCA is slower than the Riemannian gradient descent, while
the DCA already requires about a factor of 50 less iterations.
Similarly, the Riemannian DCA requires a factor of 1\,000 less iterations than the
Riemannian gradient descent, but since a single iteration is more costly, it
is only about a factor of 2 faster.
\begin{table}[tbp]
    \centering
    \begin{tabular}{lrr}
        \toprule
        \textbf{Algorithm} & \textbf{Runtime} & \textbf{\# Iterations} \\
        \midrule
        Euclidean GD & 305.567\,sec.& 53\,073\,227 \\
        Euclidean DCA & 58.268\,sec.& 50\,588\\
        Riemannian GD & 18.894\,sec.& 2\,454\,017 \\
        Riemannian DCA & 7.704\,sec.& 2\,459 \\
        \bottomrule
    \end{tabular}
    \caption{Summary of the runtime and number of iterations of the four experiments performed on the Rosenbrock example.}\label{tab:Rosenbrock}
\end{table}
\subsection{Constrained maximization of the Fréchet variance}
\label{sec:logdet}

Let $\mathcal M$ be the  manifold \eqref{eq:RiemPmatrix} of symmetric positive definite matrices $\mathbb P^n_{++}$, $n\in\mathbb N$
with the affine invariant metric $\langle\cdot,\cdot\rangle$,
$\{q_1, \dots, q_m\}\subset {\mathcal M}$ be a data set of distinct points, \ie $q_i\neq q_j$ for $i\neq j$,
and  $\mu_1, \ldots \mu_m$ be non-negative weights with $\sum_{j=1}^m\mu_j=1$.
Let $h\colon \mathcal M\to \mathbb R$ be the function defined by
\begin{equation}
    \label{eq:ff}
    h(p) \coloneqq
        \sum_{j=1}^m \mu_j d^2(p,q_i),
        \qquad \text{where }
        d^2(p,q_i)\coloneqq\operatorname{tr}\bigl(
            \log^2(p^{-1/2}q_jp^{-1/2})
        \big).
\end{equation}
Recall that
\begin{equation*}
    d(p, q) \coloneqq
    \lVert \log(p^{-1/2}qp^{-1/2})\rVert_{F}
    = \sqrt{\operatorname{tr}\log^2(p^{-1/2}qp^{-1/2})}
\end{equation*}
is the Riemannian distance between $p$ and $q$ on $\mathcal M$.
When every one of the weights  ${\mu}_1, \ldots {\mu}_m$ are equal,
this function $h$ is known as the \emph{Fréchet variance}
of the set $\{q_1, \dots, q_m\}$, see~\cite{Horev2017}.
In this example we want to consider the constrained Fréchet variance maximization problem, which is stated as
\begin{equation}\label{eq:probmaxpddK}
     \operatorname*{arg\,max}_{p\in\mathcal C} h(p)
\end{equation}
where the constrained  convex  set is  given by
\begin{equation}\label{eq:boxsetKd}
    {\mathcal C} \coloneqq
        \{ p\in {\mathcal M}\ |\ \bar q_{-}\preceq p \preceq \bar q_{+} \},
\end{equation}
where $\bar q_-,  \bar q_+\in\mathcal M$ with $\bar q_{-}\prec  \bar q_{+}$. Here, $p \prec q$ ($p \preceq q$) denotes the (non-strict) partial ordering on the spd-matrices, i.\,e.\ that $q-p$ is positive (semi-)definite or for short $q-p \prec 0$ ($\preceq 0$). We point out that \cite[Lemma 2 (iii)]{Lim2012} implies that the set $C$ is convex.

The problem~\eqref{eq:probmaxpddK} can be equivalently stated
as a Difference of Convex problem or a non-convex minimization problem.
The second formulation can be algorithmically solved by a Frank-Wolfe
algorithm~\cite{WeberSra2022}.

\paragraph{Maximizing the Fréchet variance as a DC problem.}

We define the \emph{indicator function} of the set $\mathcal C$ as
\begin{equation*}
    \iota_{\mathcal C}(p) = \begin{cases}
        0 & \text{ if } p \in \mathcal C\\
        \infty & \text{ else.}
    \end{cases}
\end{equation*}
Using $g = \iota_{\mathcal C}$ and the fact that
\begin{equation*}
    \operatorname*{arg\,max}_{p\in\mathcal C} h(p)
    =
    \operatorname*{arg\,min}_{p\in\mathcal C} -h(p)
\end{equation*}
to rephrase problem~\eqref{eq:probmaxpddK} to
\begin{equation}
    \label{eq:probmaxef}
    -\operatorname*{arg\,min}_{p\in\mathcal M} f(p),
    \qquad\text{where }  f(p)\coloneqq g(p)-h(p).
\end{equation}
We obtain indeed for $p\in \mathcal C$ that $f(p) = -h(p)$ and hence at a minimizer of $f$ we obtain a maximizer of $h$.
This hence yields a DC problem as studied in the previous sections.

By using~\eqref{eq:Grad}  and~\eqref{eq:ff}, the gradient $\grad h(p)$ is given by
\begin{align}  \label{ed:subgrdpdm1dp}
    \grad h(p)
    &= -2\sum_{j=1}^{m}\mu_jp^{1/2}\log({p^{-1/2}q_jp^{-1/2}}) p^{1/2}\\
    &= 2\sum_{j=1}^{m}{\mu_j}p^{1/2} \log({p^{1/2}q_j^{-1}p^{1/2}}) p^{1/2}.
    \label{ed:subgrdpdm2dp}
%   &=2 \sum_{j=1}^{m}{\mu_j} p\log({pq_j^{-1}}).
\end{align}
In this case, due to $g(p)=0$, for $p\in\mathcal C$,  the subproblem~\eqref{eq:DCAS}  for $X^{(k)}=\grad h(p^{(k)})$ is given by
\begin{equation}
    \label{eq:DCASEc}
    p^{(k+1)}\in \argmin_{p\in  {\mathcal C}}
        \big\langle- \grad h(p^{(k)}), \exp^{-1}_{p^{(k)}}p\big\rangle.
\end{equation}
On the other hand, it follows from~\eqref{eq:metric} and~\eqref{eq:invexp} that
\begin{align}
 \big\langle-\grad h(p^{(k)}),& \exp^{-1}_{p^{(k)}}p\big\rangle
 \\
    & = \big\langle
        -\grad h(p^{(k)}),
        {\bigl(p^{(k)}\bigr)}^{1/2} \log
        \Bigl(
            {{\bigl(p^{(k)}\bigr)}^{-1/2}p{\bigl(p^{(k)}\bigr)}^{-1/2}}\Bigr) {\bigl(p^{(k)}\bigr)}^{1/2}
    \big\rangle\\
    & = \operatorname{tr}\Bigl(
        -{\bigl(p^{(k)}\bigr)}^{-1/2} \grad h(p^{(k)}){\bigl(p^{(k)}\bigr)}^{-1/2}\log({{\bigl(p^{(k)}\bigr)}^{-1/2}p{\bigl(p^{(k)}\bigr)}^{-1/2}})
    \Bigr).
\end{align}
Therefore, the problem~\eqref{eq:DCASEc} becomes
\begin{equation} \label{eq:DCASEcef}
                p^{(k+1)}\in \argmin_{p\in  {\mathcal C}} \operatorname{tr}\big(s^{(k)} \log({{\bigl(p^{(k)}\bigr)}^{-1/2}p{\bigl(p^{(k)}\bigr)}^{-1/2}})\big),
\end{equation}
where, by using~\eqref{ed:subgrdpdm1dp}, the matrix $s^{(k)}$ is given by
\begin{equation}  \label{ed:subgrdpdm1it}
s^{(k)} = -{\bigl(p^{(k)}\bigr)}^{-1/2} \grad h(p^{(k)}){\bigl(p^{(k)}\bigr)}^{-1/2}= 2\sum_{j=1}^{m}{\mu_j}\log({\bigl(p^{(k)}\bigr)}^{-1/2}q_j{\bigl(p^{(k)}\bigr)}^{-1/2}).
\end{equation}
or, by using~\eqref{ed:subgrdpdm2dp},  the  matrix $s^{(k)}$ is given  equivalently  by
\begin{equation}  \label{ed:subgrdpdm2it}
s^{(k)}\coloneqq-{\bigl(p^{(k)}\bigr)}^{-1/2} \grad h(p^{(k)}){\bigl(p^{(k)}\bigr)}^{-1/2}= -2\sum_{j=1}^{m}{\mu_j}\log({\bigl(p^{(k)}\bigr)}^{1/2}q_j^{-1}{\bigl(p^{(k)}\bigr)}^{1/2}).
\end{equation}
 In order to deal with  the subproblem \eqref{eq:DCASEcef} we consider the following theorem,  which gives a closed formula for it , see  \cite[Theorem~4]{WeberSra2022}.
\begin{theorem}\label{th:esmw}
Let $L,U\in {\mathbb P}^{n}_{++}$ such that $L\prec U$.
Let $S$ be a Hermitian $(n\times n)$ matrix and $X\in{\mathbb P}^{n}_{++}$ be arbitrary.
Then, the solution to the optimization problem
\begin{align*}
\min_{L\preceq Z\preceq U} \mbox{\emph{tr}}(S\log (XZX)),
\end{align*}
is given by $Z=X^{-1}Q\left( P^{\top}[-\mbox{sgn}(D)]_{+}P+\hat{L} \right)Q^{\top}X^{-1}$, where $S=QDQ^{\top}$ is a diagonalization of $S$, $\hat{U}-\hat{L}=P^{\top}P$ with $\hat{L}=Q^{\top}XLXQ$ and $\hat{U}=Q^{\top}XUXQ,$ where $[-\mbox{sgn}(D)]_{+}$ is the diagonal matrix
$$\mbox{\emph{diag}}\left([-\mbox{sgn}(d_{11})]_{+}, \ldots, [-\mbox{sgn}(d_{nn})]_{+} \right)$$ and $D=(d_{ij}).$
\end{theorem}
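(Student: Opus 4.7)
The plan is to strip away the structural parameters by a sequence of congruences, reducing the problem to one whose optimum can be identified directly, and then to substitute back.

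\emph{Step 1 (Eliminate $X$).} Since $X\succ 0$, the map $Z\mapsto W \coloneqq XZX$ is a bijection of $\mathbb{P}^n_{++}$; by congruence it carries $L\preceq Z\preceq U$ to $XLX\preceq W\preceq XUX$, and the objective becomes $\operatorname{tr}(S\log W)$.

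\emph{Step 2 (Diagonalize $S$).} Write $S=QDQ^\top$ with $D$ diagonal, and set $V = Q^\top W Q$. Orthogonality of $Q$ gives $\log V = Q^\top(\log W)Q$, so by cyclicity of trace
\[
    \operatorname{tr}(S\log W) = \operatorname{tr}\bigl(D Q^\top(\log W) Q\bigr) = \operatorname{tr}(D\log V),
\]
and the constraint becomes $\hat L \preceq V \preceq \hat U$, with $\hat L = Q^\top XLX Q$ and $\hat U = Q^\top XUX Q$ as in the statement. The problem reduces to
\[
    \min_{\hat L \preceq V \preceq \hat U} \operatorname{tr}(D\log V), \qquad D \text{ diagonal.}
\]

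\emph{Step 3 (Parameterize and identify).} Because $L\prec U$ forces $\hat U - \hat L \succ 0$, the Cholesky-type factorization $\hat U - \hat L = P^\top P$ has $P$ invertible, and every feasible $V$ is uniquely of the form $V = \hat L + P^\top N P$ with $0\preceq N\preceq I$. The candidate minimizer corresponds to $N^* = [-\operatorname{sgn}(D)]_+$, the diagonal $0/1$ projector selecting the negative diagonal entries of $D$; substituting the corresponding $V^*$ back through $V\mapsto W = QVQ^\top \mapsto Z = X^{-1}WX^{-1}$ recovers the stated formula. It therefore only remains to verify that $N^*$ truly minimizes the reduced problem.

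\emph{Main obstacle.} This verification is the only substantive step, since $D$ is generally indefinite and so the map $N\mapsto \operatorname{tr}(D\log(\hat L + P^\top N P))$ is neither convex nor concave on $[0,I]$; no one-shot monotonicity or convexity argument applies. My plan is to analyze the first-order KKT system: using the integral representation of the Fréchet derivative,
\[
    \mathrm{D}\!\log(V)[H] = \int_0^\infty (V+sI)^{-1} H (V+sI)^{-1}\, ds,
\]
the gradient of the reduced objective at $N$ is $P\bigl(\int_0^\infty (V+sI)^{-1} D (V+sI)^{-1}\,ds\bigr)P^\top$, and one must produce multipliers $\Lambda_0,\Lambda_I\succeq 0$ with $\operatorname{tr}(\Lambda_0 N^*)=\operatorname{tr}(\Lambda_I(I-N^*))=0$ balancing it at $N^*$. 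Complementary slackness, together with the splitting $D = D_+ - D_-$ into its positive and negative parts, suggests supporting $\Lambda_0$ on the non-negative and $\Lambda_I$ on the negative eigenspaces of $D$; the specific role of the Cholesky factor $P$ is to couple the eigenbases of $D$ and of $\hat U - \hat L$ so that this construction is consistent. Upgrading stationarity to global optimality then follows by restricting to line segments through $N^*$ inside $[0,I]$ and exploiting that the concave and convex pieces of the objective (on the supports of $D_+$ and $D_-$ respectively) are each separately extremised at the chosen boundary. This coupling is where I expect the hardest technical work to lie.
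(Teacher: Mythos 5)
The paper itself contains no proof of this statement to compare against: it is imported verbatim from \cite[Theorem~4]{WeberSra2022} and used as a black-box subproblem solver in the constrained Fr\'echet-variance experiment. Judged on its own terms, your proposal is sound through the reductions: the congruence $W=XZX$, the conjugation $V=Q^\top WQ$, and the parameterization $V=\hat L+P^\top NP$ with $0\preceq N\preceq I$ are all correct and reduce the problem to minimizing $\phi(N)=\operatorname{tr}\bigl(D\log(\hat L+P^\top NP)\bigr)$ over $0\preceq N\preceq I$. The genuine gap is exactly the step you defer: the verification that $N^*=[-\operatorname{sgn}(D)]_+$ is optimal is never carried out, and it cannot be, because the KKT system you propose is inconsistent in general. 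Complementary slackness pins $\Lambda_0$ to the coordinate block $\{i:d_{ii}\ge 0\}$ and $\Lambda_I$ to $\{i:d_{ii}<0\}$, so stationarity forces the off-diagonal block (between those two index sets) of the gradient $P\bigl(\int_0^\infty(V^*+sI)^{-1}D(V^*+sI)^{-1}\,ds\bigr)P^\top$ to vanish. That holds when $P$ is block-diagonal for this splitting---e.g.\ when $\hat U-\hat L$ commutes with $D$, in which case the problem decouples into scalar problems and the formula follows from operator monotonicity of $\log$---but not otherwise: the Cholesky factor does not merely couple the two eigenbases consistently, it destroys stationarity at $N^*$.

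Indeed the statement as transcribed is false. Take $n=2$, $X=I$, $S=D=\operatorname{diag}(1,-1)$ (so $Q=I$), $L=I$, $U=\left(\begin{smallmatrix}3&1\\1&2\end{smallmatrix}\right)$. Then $\hat U-\hat L=\left(\begin{smallmatrix}2&1\\1&1\end{smallmatrix}\right)=P^\top P$ with $P=\tfrac{1}{\sqrt2}\left(\begin{smallmatrix}2&1\\0&1\end{smallmatrix}\right)$, $[-\operatorname{sgn}(D)]_+=\operatorname{diag}(0,1)$, and the claimed minimizer is $Z^*=\operatorname{diag}(1,\tfrac32)$ with value $\operatorname{tr}(D\log Z^*)=-\log\tfrac32\approx-0.4055$. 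However, $Z=\left(\begin{smallmatrix}1.04&0.12\\0.12&1.6\end{smallmatrix}\right)$ is feasible (both $Z-L$ and $U-Z$ have positive trace and determinant $0.0096$) and gives $\operatorname{tr}(D\log Z)\approx-0.4320<-0.4055$; it corresponds to the feasible perturbation $N=\left(\begin{smallmatrix}0.02&0.1\\0.1&0.98\end{smallmatrix}\right)$ of $N^*$ and exploits precisely the nonvanishing off-diagonal gradient entry identified above. So no argument along your lines (or any other) can close the gap without an extra hypothesis such as commutativity of $S$ with $X(U-L)X$; you should check whether the defect originates in the transcription here or in the cited source, and note that it also affects the paper's reliance on this formula as an exact solver of subproblem~\eqref{eq:DCASEcef}.
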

\begin{remark}
    The solution to \eqref{eq:DCASEcef}
    can be obtained \cref{th:esmw} setting $L=\bar q_-$, $U=\bar q_+$, $S = s^{(k)}$, $X=\bigl(p^{(k)}\bigr)^{-\frac{1}{2}}$ and $Z=p$.
    Note that given $p^{(k)}$ both $X$ and $X^{-1}$ can be easily computed using the eigen decomposition and modifying the diagonal matrix.
\end{remark}
To minimize a constrained, non-convex function $f_{\mathrm{FW}} \colon \mathcal X \to \mathbb R$, $\mathcal X \subset \mathcal M$, \cite{WeberSra2022} propose the Riemannian Frank-Wolfe algorithm as summarized in \cref{Alg:FW}.

\begin{algorithm}[hbp]
    \caption{The Riemannian Frank-Wolfe Algorithm, cf.~\cite[Algorithm 2]{WeberSra2022}.}\label{Alg:FW}
    \begin{algorithmic}[1]
        \STATE {
        Choose an initial point $p^{(0)}\in \mathcal X$.
        Set $k=0$.
        }
        \WHILE{convergence criterion is not met}
            \STATE $q^{(k)} \gets
                \displaystyle\operatorname*{argmin}_{q\in\mathcal X}\ %
                    \bigl\langle
                    \operatorname{grad} f_{\mathrm{FW}}(p^{(k)}),
                    \exp_{p^{(k)}}^{-1}q
                \bigr\rangle$
        \STATE $s_k \gets \frac{2}{2+k}$
        \STATE $p^{(k+1)} = \gamma_{p^{(k)}q^{(k)}}(s_k)$
        \STATE $k \gets k+1$
        \ENDWHILE
    \end{algorithmic}
\end{algorithm}

In our example we have $\mathcal X = \mathcal C$ and $f_{\mathrm{FW}} = -h$, \ie  we obtain a concave constrained problem. Since $f_{\mathrm{FW}} = -h$, we obtain the same subproblem in Step 3 of the Frank-Wolfe Algorithm as stated in~\eqref{eq:DCASEc}.
Thus, we have two algorithms for solving the problem~\eqref{eq:probmaxpddK} or equivalently~\eqref{eq:probmaxef},
namely \cref{Alg:FW} and \cref{Alg:DCA2}.
Both possess the same subproblem in this case.
They treat the result of the subproblem differently, though.
While \cref{Alg:DCA2} uses the subproblem solution directly for the next iteration, \cref{Alg:FW} uses the solution as an end point of a geodesic segment  starting from the previous iterate. This geodesic segment  is then evaluated at a certain interims point.
This also means that \cref{Alg:FW} has to start in a \emph{feasible point} $p^{(0)} \in \mathcal C$, while for \cref{Alg:DCA2} this is not necessary.
\\
In our numerical example we consider $\mathcal M = \mathbb P_{++}^{20}$, that is, the set of $20\times20$ symmetric positive definite matrices with the affine-invariant metric $\langle\cdot,\cdot\rangle$. This is a Riemannian manifold of dimension $d=210$. We further generate $m=100$ random spd\ matrices $q_i$ with corresponding random weights $w_i$ as the data set for the Fréchet variance.
We set
\begin{equation*}
    q_{-} \coloneqq \biggl(\sum_{i=1}^m w_iq_i^{-1}\biggr)^{-1},
    \qquad
    q_{+} \coloneqq \sum_{i=1}^m w_iq_i,
    \quad\text{ and }\quad
    p^{(0)}\coloneqq \frac{1}{2}(q_{-}+q_{+}).
\end{equation*}
A numerical implementation of \cref{th:esmw} is used as a closed-form solver of the subproblem.
Numerically we observe, that these results might suffer from
imprecisions, which means they might not meet the constraint, but only by around $2\cdot10^{-13}$.
Since we use these points as iterates in \cref{Alg:DCA2}, only for this algorithm we add a “safeguard” and perform a small line search for the first matrix closest matrix to the sub-solver's result $q^*$ on the geodesic to the last iterate $p^{(k)}$ fulfilling the constraint.

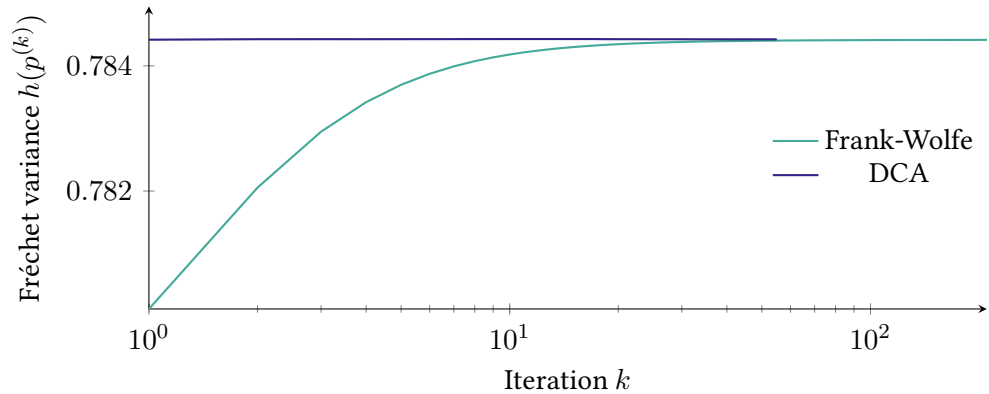
\begin{figure}[tbp]\centering
    \begin{tikzpicture}
        \begin{axis}[
            xmode=log,
            y tick label style={
              /pgf/number format/.cd,
                fixed,
                fixed zerofill,
                precision=3,
                /tikz/.cd
            },
            ymax=0.78495,
            %ymode=log,
            %ymin=1e-16,
            %ymax=3*1e4,
            xmin=1,
            xmax=210,
            legend style={draw=none, at={(1.0,0.5)},anchor=east},
            width=.8\textwidth,
            height=.25\textheight,
            axis lines=left,
            ylabel near ticks,
            xlabel near ticks,
            xlabel={Iteration $k$},
            ylabel={Fréchet variance $h(p^{(k)})$},
        ]
        \addplot[color=TolMutedTeal, thick]
        table [x=i, y=h_fw, col sep=comma] {data/DCAvsFrankWolfe-n20-m100-experiment1_fw_short.csv};
        \addplot[color=TolMutedIndigo, thick]
                table [x=i, y=h_dc, col sep=comma] {data/DCAvsFrankWolfe-n20-m100-experiment1_dc.csv};
            \legend{Frank-Wolfe, DCA}
        \end{axis}
    \end{tikzpicture}
    \caption{A comparison of the Fréchet variance $h(p)$ during the iterations of \cref{Alg:DCA2} (indigo) and \cref{Alg:FW} (teal).
    }\label{fig:DCAvsFW}
\end{figure}

Then we stop \cref{Alg:DCA2} if the change $d(p^{(k)}, p^{(k+1)}) < 10^{-14}$ or if the gradient change between these two iterates (computes using parallel transport) is below $10^{-9}$.
The algorithm stops after $55$ iterations due to a small gradient change.
\\
For Frank-Wolfe a suitable stopping criterion is challenging.
Note that the gradient $\operatorname{grad} f_{\mathrm{fw}}$ does not tend to $0$ of the minimizer is on the boundary. Even after $100\,000$ iterations, Frank-Wolfe still has not reached either of the stopping criteria, both changes are still of order $10^{-4}$.
While the Difference of Convex Algorithm reaches it's minimum (its maximum in $h$) after $11$ iterations and then increases slightly, probably due to the closed-form solution not being precise, Frank-Wolfe reaches neither of these two values – after $11$ or $55$ in these $100\,000$ iterations.
\\
Finally, comparing the time per iteration, both algorithms comparable. With the numerical safeguard for this specific problem, $1000$ iterations of DCA take $16.01$ seconds,
Frank-Wolfe $8.13$ seconds and DCA without the safeguard $7.475$ seconds. That is, in runtime per iteration, using the same sub solver, both perform similarly, while DCA seems to have a vanishing gradient change.
\section{Conclusion}\label{sec:Conclusion}
In this paper, we investigated the extension of the Difference of Convex Algorithm (DCA) to the Riemannian case, enabling us to solver DC problems on Riemannian manifolds. We investigate its relation to Duality on manifolds and state a convergence result on Hadamard manifolds.

Numerically, the new algorithm outperforms the existing Difference of Convex Proximal Point algorithm (DCPPA) in terms of the number of iterations. However, for large-dimensional manifolds, the DCPPA is faster. Additionally,  for a specific class of constrained maximization problems, the DCA is well-suited and outperforms the Riemannian Frank-Wolfe algorithm, especially because a suitable stopping criterion can be used but also in how close it gets to the actual minimizer.
Finally, rephrasing Euclidean problems into DC problems with a suitable metric is another field where using the DCA seems very beneficial.

Extending the numerical algorithms also to employing duality is a future research topic, where the iteration time and convergence speed might increase.

\printbibliography
\end{document}